\theoremstyle{plain}
\numberwithin{equation}{section}
\newtheorem{theorem}{Theorem}[section]
\newtheorem{lemma}[theorem]{Lemma}
\newtheorem{definition-lemma}[theorem]{Definition-Lemma}
\newtheorem{proposition}[theorem]{Proposition}
\newtheorem{corollary}[theorem]{Corollary}
\newtheorem{definition}[theorem]{Definition}
\newtheorem{example}[theorem]{Example}
\newtheorem{remark}[theorem]{Remark}
\newcommand{\st} [1]     {\scriptscriptstyle{{#1}}}
\newcommand{\rmap}       {\longrightarrow}
\newcommand{\Ker}        {{\mathrm {ker}}}
\newcommand{\Grd}        {\mathcal{G}}
\newcommand{\SP} [1]     {{\left\langle {{#1}} \right\rangle}}
\newcommand{\oLambda}   {\overline{\Lambda}}
\newcommand{\cX}        {\mathcal{X}}
\newcommand{\LF}       {\mathrm{Lie}} 
\newcommand{\he}       {\widehat{e}}
\newcommand{\sour}        {\mathsf{s}}
\newcommand{\tar}         {{\mathsf{t}}}
\newcommand{\vl}       {\scriptscriptstyle{\vee}}
\newcommand{\Lie}        {\mathcal L}
\newcommand{\dd}     {\mathrm{d}}
\begin{document}
\title[]
{Multiplicative forms at the infinitesimal level}

\author[]{Henrique Bursztyn and  Alejandro Cabrera}

\address{Instituto de Matem\'atica Pura e Aplicada,
Estrada Dona Castorina 110, Rio de Janeiro, 22460-320, Brasil }
\email{henrique@impa.br}

\address{Department of Mathematics,
University of Toronto, 40 St. George Street, Toronto, Ontario, M5S
2E4 Canada } \email{ale.cabrera@gmail.com \footnote{A.C.'s current
address:
    Departamento de Matem\'atica Aplicada, Instituto de Matemática, Universidade Federal do Rio de Janeiro, CEP 21941-909, Rio de Janeiro - RJ, Brazil
}}


\date{}

\maketitle

\begin{abstract}
We describe arbitrary multiplicative differential forms on Lie
groupoids infinitesimally, i.e., in terms of Lie algebroid data.
This description is based on the study of linear differential forms
on Lie algebroids and encompasses many known integration results
related to Poisson geometry. We also revisit multiplicative
multivector fields and their infinitesimal counterparts, drawing a
parallel between the two theories.
\end{abstract}

\tableofcontents

\section{Introduction}\label{sec:intro}

This paper is devoted to the study of multiplicative differential
forms on Lie groupoids, with focus on their infinitesimal
counterparts. Given a Lie groupoid $\Grd$ over a manifold $M$,
recall that a $k$-form $\omega \in \Omega^k(\Grd)$ is called
\textit{multiplicative} if $m^*\omega = \mathrm{pr}_1^*\omega +
\mathrm{pr}_2^* \omega$, where $m:\Grd^{(2)}=\Grd\times_M\Grd\to
\Grd$ is the groupoid multiplication, and $\mathrm{pr}_i:
\Grd^{(2)}\to \Grd$, $i=1,2$, are the natural projections. Our goal
is to characterize multiplicative forms on $\Grd$ solely in terms of
information from its Lie algebroid. We will also discuss the
analogous problem for multiplicative multivector fields.

Multiplicative differential forms and multivector fields on Lie
groupoids have been studied for over 20 years in a variety of
contexts. On Lie groups, multiplicative bivector fields came to
notice in the late 1980s when the concept of Poisson Lie group (see
e.g. \cite{LW} and references therein) was systematized.
Multiplicative 2-forms on Lie groupoids appeared around the same
time as the symplectic forms on symplectic groupoids \cite{CDW,W},
originally introduced as part of a quantization scheme for Poisson
manifolds. Poisson Lie groups and symplectic groupoids naturally led
to Poisson groupoids \cite{W2}, which gave further impetus to the
study of multiplicative structures. In particular, multiplicative
vector fields on Lie groupoids turn out to encompass classical
lifting processes of general interest in differential geometry, see
\cite{Mac-Xu}. There are further connections between multiplicative
2-forms and bivector fields and the theory of moment maps, found
e.g. in \cite{BC,bcwz,ILX,MiWe,xu}.

A central issue when considering  multiplicative geometrical
structures on Lie groupoids concerns their infinitesimal
description, i.e., their description in terms of Lie-algebroid data.
As usual in Lie theory, relating global and infinitesimal objects
involves suitable differentiation and integration procedures.
Important classes of examples of multiplicative structures and their
infinitesimal counterparts include Poisson Lie groups and Lie
bialgebras (see e.g. \cite{LW}), symplectic groupoids and Poisson
manifolds (see \cite{catfel,CDW}), and, more generally, the
correspondence between Poisson groupoids and Lie bialgebroids
\cite{Mac-Xu1,Mac-Xu2}. Dirac structures \cite{courant,SW} fit into
a similar picture, being the infinitesimal versions of certain
multiplicative 2-forms, see \cite{bcwz}. As observed in \cite{Cr},
generalized complex structures \cite{Gua,Hi} provide another class
of geometrical structures encoding infinitesimal information that
can be integrated to multiplicative structures on Lie groupoids. In
establishing these infinitesimal-global correspondences, different
methods have been employed, some based on the integrability of
Lie-algebroid morphisms, e.g. \cite{BCO,Mac-Xu1,Mac-Xu2}, others on
infinite-dimensional arguments, e.g. \cite{bcwz,catfel,CX,ILX}.
Although identifying the infinitesimal versions of multiplicative
forms is key in some of the aforementioned works, only special cases
of this problem have been considered. In this paper, we treated it
in full generality.

From yet another perspective, multiplicative differential forms
arise as constituents of the Bott-Schulman double complex of Lie
groupoids \cite{Bott} (see also \cite{AC} and references therein),
which computes the cohomology of their classifying spaces. So the
problem of understanding multiplicative forms infinitesimally may be
seen as part of the problem of finding infinitesimal models for the
cohomology of classifying spaces. This broader viewpoint is explored
in the recent work \cite{AC}, leading to results closely related to
ours; a comparison between them is also discussed in this paper.

Our approach to describe multiplicative forms infinitesimally starts
with the study of \textit{linear} differential forms on vector
bundles $A\to M$. We observe (Theorem~\ref{prop:struc}) that any
linear $k$-form on $A$ is equivalent to a pair $(\mu,\nu)$ of
vector-bundle maps $\mu: A\to \wedge^{k-1}T^*M$, $\nu :A\to
\wedge^kT^*M$, covering the identity on $M$. If $A$ carries a Lie
algebroid structure, with bracket $[\cdot,\cdot]$ and anchor $\rho$,
we say that the pair $(\mu,\nu)$ is an \textit{IM $k$-form} ($IM$
standing for \textit{infinitesimally multiplicative}) if the
following compatibility conditions are satisfied: for all $u,v \in
\Gamma(A)$,
\begin{enumerate}
\item $i_{\rho(u)}\mu(v) = - i_{\rho(v)}\mu(u)$,
\item $\mu([u,v]) = \Lie_{\rho(u)}\mu(v)-i_{\rho(v)}\dd\mu(u) -
i_{\rho(v)}\nu(u)$,
\item $\nu([u,v])= \Lie_{\rho(u)}\nu(v) -
i_{\rho(v)}\dd\nu(u).$
\end{enumerate}

We prove in Theorem~\ref{thm:1-1} that multiplicative $k$-forms on a
source-simply-connected Lie groupoid $\Grd$ over $M$ are in
one-to-one correspondence with IM $k$-forms on its Lie algebroid
$A\to M$. Concretely, the IM $k$-form $(\mu,\nu)$ associated with a
multiplicative $k$-form $\omega\in \Omega^k(\Grd)$ is defined by
\begin{align*}
\SP{\mu(u), X_1\wedge\ldots\wedge
X_{k-1}} & = \omega(u,X_1,\ldots,X_{k-1}),\\
\SP{\nu(u), X_1\wedge\ldots\wedge X_{k}} & =
\dd\omega(u,X_1,\ldots,X_{k}),
\end{align*}
where $X_i\in TM$, $i=1,\ldots,k$, and we view $M\subseteq \Grd$ and
$A\subseteq T\Grd|_M$.

A special class of IM-forms is obtained as follows. Any closed form
$\phi \in \Omega^{k+1}(M)$ determines a map $\nu: A\to
\wedge^kT^*M$, $\nu(u)=-i_{\rho(u)}\phi$, satisfying condition $(3)$
above. The IM $k$-forms $(\mu,\nu)$ with $\nu$ of this type are
referred to as \textit{IM $k$-forms relative to $\phi$}; they are
the infinitesimal versions of multiplicative $k$-forms satisfying
$$
\dd\omega = \sour^*\phi - \tar^*\phi,
$$
where $\sour$ and $\tar$ denote the groupoid source and target maps.
For $k=2$, IM forms relative to $\phi$ include $\phi$-twisted
Poisson and Dirac structures \cite{SW}, and our
Theorem~\ref{thm:1-1} recovers their known integrations
\cite{bcwz,CX}. For arbitrary $k$, IM forms relative to $\phi$ were
studied in \cite{AC} in connection with the Weil algebra of a Lie
algebroid. These and other examples are discussed in this paper.

The method we use to integrate IM forms on Lie algebroids to
multiplicative forms on Lie groupoids relies entirely on the known
correspondence between Lie-algebroid and Lie-groupoid morphisms
(Lie's second theorem for Lie algebroids); in particular, we do not
resort to the path spaces of \cite{catfel,CF,severa}, hence avoiding
infinite dimensional constructions. Although our method is inspired
by \cite{BCO,Mac-Xu1,Mac-Xu2}, it brings a technical difference in
that we represent differential $k$-forms on a manifold $N$ by
\textit{functions} $\oplus^k TN \to \mathbb{R}$ (as opposed to maps
$\oplus^{k-1} TN \to T^*N$); this small variation greatly simplifies
computations, so even when restricted to known situations, our
general proof seems more direct than existing ones. The integration
of IM forms is carried out in two steps: first, we show that an IM
$k$-form on a Lie algebroid $A\to M$ defines an element in
$\Omega^k(A)$ whose associated function $\oplus^k TA \to \mathbb{R}$
is a Lie-algebroid morphism; second, upon integration, one obtains a
groupoid morphism $\oplus^k T\Grd \to \mathbb{R}$ which defines a
multiplicative $k$-form\footnote{Forms
 on $\Grd$ are commonly viewed as sections $\Grd\to \wedge^k T^*\Grd$. The multiplicative ones, for $k=1$, are such that
 $\Grd\to T^*\Grd$ is a groupoid morphism. However, this viewpoint does not extend
 to
$k\geq 2$, as in this case $\wedge^k T^*\Grd$ inherits no canonical
groupoid structure in general; in contrast, $\oplus^kT\Grd$ is
always a groupoid and multiplicative forms are conveniently
described by groupoids morphisms $\oplus^kT\Grd\to \mathbb{R}$. An
analogous discussion holds for multivector fields.}.


In the last part of the paper, we revisit multiplicative multivector
fields on Lie groupoids, as in \cite{ILX}. We show how the very same
techniques used to study multiplicative forms apply to the dual
situation of multivector fields, leading to an alternative
 proof of the universal lifting theorem of \cite{ILX} (not involving
path spaces) and drawing a clear parallel between the two theories.

As a final remark, we note that the results in this paper admit a
natural formulation in terms of graded geometry. Multiplicative
forms and multivector fields on a given Lie groupoid $\Grd$ may be
seen as multiplicative \textit{functions} on the associated
\textit{graded} Lie groupoids $T[1]\Grd$ and $T^*[1]\Grd$,
respectively. On ordinary Lie groupoids, the infinitesimal
counterpart of a multiplicative function is a Lie-algebroid cocycle.
The same holds at the graded level and, from this perspective, our
results consist in using the geometry of $T[1]\Grd$ and $T^*[1]\Grd$
to obtain concrete descriptions of their Lie-algebroid cocycles. For
example, using the the natural multiplicative vector field on
$T[1]\Grd$ (the de Rham differential on $\Grd$), one identifies its
Lie-algebroid cocycles with IM forms (see
Theorem~\ref{thm:mainresult}); for an analogous description of the
Lie-algebroid cocycles of the graded groupoid $T^*[1]\Grd$ (see
Theorem~\ref{thm:maindual}), ones uses its canonical multiplicative
symplectic structure (defined by the Schouten bracket on $\Grd$).
 We will not
elaborate on the supergeometric viewpoint in this paper, though it
makes our results more intuitive.

The paper is organized as follows. In Section \ref{sec:linear}, we
consider linear differential forms on vector bundles $A\to M$ and
establish their correspondence with pairs of vector-bundle maps
$(\mu,\nu)$, where $\mu:A\to \wedge^{k-1}T^*M$ and $\nu:A\to
\wedge^k T^*M$. In Section \ref{sec:alg}, we define IM $k$-forms on
Lie algebroids and prove a compatibility result with tangent Lie
algebroid structures (Theorem \ref{thm:mainresult}). Section
\ref{sec:mult} is devoted to Theorem~\ref{thm:1-1}, which is the
correspondence between IM forms on Lie algebroids and multiplicative
forms on Lie groupoids; we also discuss several special cases of
this result. Section \ref{sec:VE} explains the relationship between
Theorem~\ref{thm:1-1} and the Van Est isomorphism of \cite{AC}. In
Section \ref{sec:dual}, we revisit the theory of multivector fields
from \cite{ILX}.

\subsection*{Acknowledgments} We thank D. Iglesias Ponte and C. Ortiz for helpful
discussions related to this project. Cabrera thanks CNPq for
financial support and IMPA for its hospitality and stimulating
environment during the development of this work. Bursztyn's research
has been supported by CNPq and Faperj. The authors acknowledge the
anonymous referees for their many useful suggestions, especially
regarding improvements in the introduction.

\subsection*{Notation, conventions and identities}\label{subsec:ident}

For vector bundles $A\to M$ and $B\to M$ over the \textit{same base}
$M$, a vector-bundle map $\Psi: A\to B$ is always assumed to cover
the identity map on $M$, unless stated otherwise. We denote its
transpose, or dual, by $\Psi^t:B^*\to A^*$. We denote the $k$-fold
direct sum of a vector bundle $q_A: A\to M$ by $\oplus^k_M A$, or
simply $\oplus^k A$ if there is no risk of confusion. We may also
use the notation $\prod_{q_A}^k A$ if we want to be explicit about
the projection map $q_A$ (this is relevant when dealing with double
vector bundles).

For a Lie groupoid $\Grd$ over $M$, we usually denote its source and
target maps by $\sour$ and $\tar$. The set $\Grd^{(2)}\subset
\Grd\times \Grd$ of composable pairs is defined by the condition
$\sour(g)=\tar(h)$, and the multiplication is denoted by
$m:\Grd^{(2)}\to \Grd$, $m(g,h)=gh$. The unit map $\epsilon: M \to
\Grd$ is often used to identify $M$ with its image in $\Grd$. The
Lie algebroid of $\Grd$ is $A\Grd=\ker(T\sour)|_M$, with anchor
$T\tar|_A:A\to M$ and bracket induced by right-invariant vector
fields. For a Lie algebroid $A\to M$, we denote its anchor by
$\rho_{\st{A}}$ and bracket by $[\cdot,\cdot]_{\st{A}}$ (or simply
$\rho$ and $[\cdot,\cdot]$, if there is no risk of confusion).

We introduce some notation and collect some identities that will be
useful for later computations. If $U_1,\ldots, U_m$ are vector
fields on a manifold $M$, we set
\begin{equation}\label{eq:Iop}
I^U_{m,r}:= i_{U_m}\ldots i_{U_r}, \;\;\; r\leq m,
\end{equation}
where $i_U$ is the usual contraction. An inductive application of
Cartan's formula gives
\begin{equation}\label{eq:cartan}
I^U_{m,1} \dd = \sum_{l=1}^m(-1)^{l+1}I_{m,l+1}^U \Lie_{U_l}
I^U_{l-1,1} + (-1)^m \dd I_{m,1}^U,
\end{equation}
where $\dd$ denotes the de Rham differential and $\Lie_U$ is the Lie
derivative. Given another vector field $X$ and recalling the
commutator formula $i_{[X,U_l]}=\Lie_Xi_{U_l}-i_{U_l}\Lie_X$, we
obtain
\begin{equation}\label{eq:LX}
\Lie_X I^U_{m,1}=\sum_{l=1}^mI^U_{m,l+1} i_{[X,U_l]}I^U_{l-1,1} +
I^U_{m,1}\Lie_X.
\end{equation}
Given a differential form $\alpha$, we also have
\begin{equation}\label{eq:alpha}
I^U_{m,1}(\dd f\wedge \alpha)=\sum_{l=1}^m(-1)^{l+1}\dd
f(U_l)I^U_{m,l+1} I^U_{l-1,1}\alpha + (-1)^m \dd f \wedge
I^U_{m,1}\alpha.
\end{equation}

We often use Einstein's summation convention when there is no risk
of confusion.


\section{Linear forms on vector bundles}\label{sec:linear}

In order to define linear forms, we recall a few facts about tangent
and cotangent bundles of vector bundles.

\subsection{Tangent and cotangent bundles of vector
bundles}\label{subsec:tancot}

Let $q_A: A \rmap M$ be a vector bundle, and let $TA$ be the tangent
bundle of the total space $A$. Besides its natural vector bundle
structure over $A$, with projection map denoted by $p_A:TA\rmap A$,
it is also a vector bundle over $TM$, with respect to the map $Tq_A:
TA \rmap TM$.

It is useful to consider a coordinate description of these bundles.
Let $(x^j)$ be coordinates on $M$, $j=1,\ldots,\mbox{dim}(M)$, and
let $\{e_d\}$ be a basis of local sections of $A$,
$d=1,\ldots,\mbox{rank}(A)$. The corresponding coordinates on $A$
are denoted by $(x^j,u^d)$, and tangent coordinates on $TA$ by
$(x^j,u^d,\dot{x}^j,\dot{u}^d)$. In this notation, given $x=(x^j)$,
the coordinates $(u^d)$ specify a point in $A_x$, $(\dot{x}^j)$ a
point in $T_xM$, whereas $(\dot{u}^d)$ determines a point on a
second copy of $A_x$, tangent to the fibres of $A\rmap M$. Note that
$p_A(x^j,u^d,\dot{x}^j,\dot{u}^d)=(x^j,u^d)$, and
$Tq_A(x^j,u^d,\dot{x}^j,\dot{u}^d)=(x^j,\dot{x}^j)$.

Similarly, consider the cotangent bundle $T^*A$, with local
coordinates $(x^j,u^d,p_j,\xi_d)$, where $(p_j)$ determines a point
in $T^*_xM$, and $(\xi_d)$ a point in $A_x^*$, dual to the direction
tangent to the fibres of $A\rmap M$. In this case, besides the
natural vector bundle structure $c_A: T^*A\rmap A$,
$c_A(x^j,u^d,p_j,\xi_d)=(x^j,u^d)$, $T^*A$ is also a vector bundle
over $A^*$ \cite{Mac-Xu}, with respect to the projection map given
in coordinates by
\begin{equation}\label{eq:r}
r: T^*A\rmap A^*,\;\; r(x^j,u^d,p_j,\xi_d) = (x^j,\xi_d).
\end{equation}

The total spaces $TA$ and $T^*A$ are examples of \textit{double
vector bundles}, see \cite{Mac-book,Prad}. They fit into the
following commutative diagrams:
$$
\xymatrix{ TA  \ar[r]^{T{q_A}} \ar[d]_{p_A} & TM \ar[d]^{p_M} \\
A \ar[r]_{q_A} &  M } \qquad \;\;\;\;
\xymatrix{ T^*A  \ar[r]^{r} \ar[d]_{c_A} & A^* \ar[d]^{q_{A^*}} \\
A \ar[r]_{q_A} &  M }
$$
where
\begin{equation}
p_M:TM\rmap M, \; p_M(x^j,\dot{x}^j)=(x^j), \;\;\; q_{A^*}:A^*\rmap
M, \; q_{A^*}(x^j,\xi_d)=(x^j),
\end{equation}
are the natural projections. Recall, see e.g. \cite{Mac-book}, that
the intersection of the kernels of the top and left arrows on each
diagram defines a vector bundle over $M$, known as the
\textit{core}. In the case of $TA$, the core is identified with
$A\rmap M$, with coordinates $(x^j,\dot{u}^d)$; for $T^*A$, the core
is $T^*M$, with coordinates $(x^j,p_j)$.

\subsection{The structure of linear forms on vector
bundles}\label{subsec:struc} Let $A\rmap M$ be a vector bundle, with
local coordinates $(x^j,u^d)$, and let us consider the $k$-fold
direct sum of $TA$ over $A$,
$$
\oplus^k_A TA := TA \times_A \ldots \times_A TA,
$$
locally described by coordinates $(x^j,u^d,
\dot{x}_1^j,\ldots,\dot{x}_k^j,\dot{u}_1^d,\ldots,\dot{u}_k^d)$. It
is a vector bundle over $A$, with projection map
$$
(x^j,u^d,
\dot{x}_1^j,\ldots,\dot{x}_k^j,\dot{u}_1^d,\ldots,\dot{u}_k^d)
\mapsto (x^j,u^d),
$$
and also a vector bundle over $\oplus^k TM = TM\times_M \ldots
\times_M TM$, with projection map
$$
(x^j,u^d,
\dot{x}_1^j,\ldots,\dot{x}_k^j,\dot{u}_1^d,\ldots,\dot{u}_k^d)\mapsto
(x^j,\dot{x}_1^j,\ldots,\dot{x}_k^j).
$$

Given a $k$-form $\Lambda \in \Omega^k(A)$ on the total space of
$A\rmap M$, let us consider the induced maps
\begin{align}
& \Lambda^\sharp: \oplus_A^{k-1} TA \rmap T^*A, \;\;\;
\Lambda^\sharp(U_1,\ldots,U_{k-1})=i_{U_{k-1}}\ldots
i_{U_1}\Lambda, \label{eq:lsharp}\\
& \overline{\Lambda}: \oplus_A^k TA \rmap \mathbb{R},\qquad
\;\;\;\;\; \overline{\Lambda}(U_1,\ldots,U_k)= i_{U_{k}}\ldots
i_{U_1}\Lambda,\label{eq:lbar}
\end{align}
which are alternating and linear in each of their
entries\footnote{Notice that, since $\Lambda^\sharp$ (resp.
$\overline{\Lambda}$) is \emph{multilinear} in its entries, it is
\textit{not} a vector-bundle morphism from the direct sum
$\oplus^{k-1} TA \to A$ (resp. $\oplus^k TA\to A$) to $T^*A\to A^*$,
unless $k=2$ (resp. $k=1$).}.

\begin{definition}\label{def:linear}
A $k$-form $\Lambda$ is called \textbf{linear} if the induced map
$\Lambda^\sharp$ \eqref{eq:lsharp} is a morphism of vector bundles
with respect to the vector bundle structures $\oplus_A^{k-1} TA
\rmap \oplus^{k-1}TM$ and $T^*A\rmap A^*$. The space of linear
$k$-forms on $A$ is denoted by $\Omega_{\mathrm{lin}}^k(A)$.
\end{definition}
In particular, $\Lambda^\sharp$ covers a base map
$\lambda:\oplus^{k-1}TM \rmap A^*$,
\begin{equation}\label{eq:Lambda}
\xymatrix{\oplus_A^{k-1} TA  \ar[r]^{\;\;\;\;\;\Lambda^\sharp} \ar[d] & T^*A \ar[d]_r \\
 \oplus^{k-1}TM \ar[r]_{\;\;\;\;\; \lambda} &  A^* .}
\end{equation}
The map $\lambda$ is skew symmetric on its entries, so it can be
viewed as a vector-bundle map $\wedge^{k-1}TM \rmap A^*$. Its
transpose is the vector-bundle map
\begin{equation}
\lambda^t: A \rmap \wedge^{k-1}T^*M.
\end{equation}

A simple computation in coordinates shows the following.

\begin{lemma}\label{lem:linear}
Given a $k$-form $\Lambda\in \Omega^k(A)$, the following are
equivalent:
\begin{enumerate}
\item $\Lambda$ is linear.
\item In local coordinates $(x^j,u^d)$ on $A$, $\Lambda$ has the form
\begin{align}\label{eq:linearlocal}
\Lambda = &\frac{1}{k!}\Lambda_{i_1\ldots i_k,d}(x)u^d \dd
x^{i_1}\wedge\ldots\wedge \dd x^{i_k} + \\\nonumber &
\frac{1}{(k-1)!}\lambda_{i_1\ldots i_{k-1}d}(x)\dd x^{i_1}\wedge
\ldots \wedge \dd x^{i_{k-1}}\wedge \dd u^d,
\end{align}
where $\lambda_{i_1\ldots i_{k-1}d} =\langle \lambda(
{\partial_{x^{i_1}}}, \ldots, {\partial_{x^{i_{k-1}}}}),e_d\rangle$,
and $\partial_{x^j}=\frac{\partial}{\partial x^{j}}$.

\item The map $\overline{\Lambda}: \oplus^k_A TA \rmap \mathbb{R}$ defines a
vector-bundle map
\begin{equation}\label{eq:Lvbmap}
\xymatrix{\oplus_A^{k} TA  \ar[r]^{\;\;\;\;\;\overline{\Lambda}} \ar[d] & \mathbb{R} \ar[d] \\
 \oplus^{k}TM \ar[r]_{} &  \{*\} . }
\end{equation}
\end{enumerate}

\end{lemma}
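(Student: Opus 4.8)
The statement is an equivalence between three descriptions of the same object, so the natural strategy is to prove a cycle of implications, or more efficiently to show that (2) is a normal form that both (1) and (3) are equivalent to. The plan is to first establish that (1) $\Leftrightarrow$ (2), then (2) $\Leftrightarrow$ (3), all by direct coordinate computation using the charts on $TA$, $T^*A$, and $\oplus^k_A TA$ recalled in Subsection~\ref{subsec:tancot}.

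First I would write a general $k$-form $\Lambda \in \Omega^k(A)$ in the coordinates $(x^j, u^d)$ on $A$, decomposing it by the number of $\dd u$'s appearing: $\Lambda = \sum_{p\ge 0} \Lambda^{(p)}$ where $\Lambda^{(p)}$ has exactly $p$ factors of the form $\dd u^d$. To analyze linearity I would compute $\Lambda^\sharp(U_1,\dots,U_{k-1})$ for tangent vectors $U_a = \dot{x}^j_a \partial_{x^j} + \dot{u}^d_a \partial_{u^d}$, using \eqref{eq:lsharp}, and read off its components in the $T^*A$ coordinates $(x^j,u^d,p_j,\xi_d)$ from \eqref{eq:r}. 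Vector-bundle linearity of $\Lambda^\sharp$ over the bases $\oplus^{k-1}TM$ and $A^*$ means, concretely, that each of the output coordinates $(x^j, \xi_d)$ — i.e. the components of $r\circ\Lambda^\sharp$ — and each of the output coordinates $(u^d, p_j)$ must be affine in $(\dot u_a^d)$ and, more precisely, that the composite $r\circ \Lambda^\sharp$ factors through $\oplus^{k-1}TM$ (independent of $u^d$ and of all $\dot u^d_a$) while the fiber coordinates $(u^d,p_j)$ depend linearly on $(u^d, \dot u^d_1,\dots,\dot u^d_{k-1})$ jointly. A short bookkeeping argument then forces all $\Lambda^{(p)}$ with $p\ge 2$ to vanish and forces the $u$-dependence of $\Lambda^{(0)}$ to be exactly linear and of $\Lambda^{(1)}$ to be $u$-independent, which is precisely \eqref{eq:linearlocal}. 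Conversely, plugging \eqref{eq:linearlocal} back into the formula for $\Lambda^\sharp$ and checking that the base map it covers is the claimed $\lambda: \oplus^{k-1}TM\to A^*$ is routine; this identifies $\lambda_{i_1\ldots i_{k-1}d}$ with the stated pairing and completes (1) $\Leftrightarrow$ (2).

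For (2) $\Leftrightarrow$ (3), I would compute $\overline{\Lambda}(U_1,\dots,U_k) = i_{U_k}\cdots i_{U_1}\Lambda$ directly from \eqref{eq:linearlocal}: the first term contributes $\Lambda_{i_1\ldots i_k,d}(x)\, u^d \,\dot{x}^{i_1}_{[1}\cdots \dot{x}^{i_k}_{k]}$ (a full antisymmetrization over the $k$ slots) and the second contributes a similar expression with one $\dot x$ replaced by some $\dot u^d_a$. Both are manifestly linear in the fiber coordinates $(u^d,\dot u^d_1,\dots,\dot u^d_k)$ of $\oplus^k_A TA\to \oplus^k TM$ and their dependence on the base coordinates $(x^j,\dot x^j_a)$ is through the coefficient functions only; hence $\overline\Lambda$ is a vector-bundle map covering the obvious map $\oplus^k TM\to\{*\}$, giving (2) $\Rightarrow$ (3). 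For the converse, if $\overline\Lambda$ is fiberwise linear over $\oplus^k TM$ then expanding a general $\Lambda = \sum_p \Lambda^{(p)}$ shows that the coefficient of a monomial with $p$ factors $\dd u$ produces, after contraction, a term of multidegree $p$ in the $\dot u$-variables (from choosing which slots receive the $\dd u$'s) times whatever $u$-dependence the coefficient carries; linearity in the joint fiber variables $(u, \dot u_1,\dots,\dot u_k)$ forces $p\le 1$ and forces the coefficient of $\Lambda^{(0)}$ to be linear in $u$ and that of $\Lambda^{(1)}$ to be constant in $u$ — again exactly \eqref{eq:linearlocal}. This closes the cycle.

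The computations are all elementary, so there is no deep obstacle; the one place that needs care is the bookkeeping in the converse directions, where one must argue that \emph{joint} linearity in all the fiber variables $(u^d,\dot u^d_1,\dots)$ — not just separate linearity in each — is what kills the higher-order terms $\Lambda^{(p)}$, $p\ge 2$, and pins down the $u$-dependence. Getting the antisymmetrization conventions and the $1/k!$, $1/(k-1)!$ normalizations in \eqref{eq:linearlocal} to match the contraction formulas \eqref{eq:lsharp}--\eqref{eq:lbar} consistently is the only genuinely fiddly part, and it is best handled by testing on the coordinate basis vectors $U_a = \partial_{x^{i}}$ or $\partial_{u^{d}}$ rather than on general tangent vectors.
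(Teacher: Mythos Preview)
Your proposal is correct and matches what the paper does: the paper offers no proof beyond the sentence ``A simple computation in coordinates shows the following,'' and your plan is precisely that computation spelled out. Your identification of the key point---that it is \emph{joint} fiberwise linearity in the variables $(u^d,\dot u^d_1,\ldots)$ that forces $\Lambda^{(p)}=0$ for $p\ge 2$ and pins down the $u$-dependence of $\Lambda^{(0)}$ and $\Lambda^{(1)}$---is exactly right, and the rest is bookkeeping.
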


Given a vector-bundle map $\mu: A\rmap \wedge^k T^*M$, let us
consider the linear $k$-form $\Lambda_\mu$ on $A$ given at a point
$u\in A$ by
\begin{equation}\label{eq:lambamu}
(\Lambda_\mu)_u := Tq_A|_u^t \mu (u).
\end{equation}
 In local coordinates $(x^i,u^d)$ on $A$, $\Lambda_\mu$ is
written as
\begin{equation}\label{eq:lambdamulocal}
(\Lambda_\mu)_{u} = \frac{1}{k!}\mu_{i_1\ldots i_k,d}(x)u^d \dd
x^{i_1}\wedge \ldots \wedge \dd x^{i_k},
\end{equation}
where $\mu_{i_1\ldots i_k,d}$ is defined by
$$
\mu_{i_1\ldots i_k,d} = \SP{\mu(e_d),\frac{\partial}{\partial
x^{i_1}}\wedge \ldots \wedge \frac{\partial}{\partial x^{i_k}}}.
$$

\begin{example}\label{ex:can1}
When $k=1$, a direct computation in coordinates shows that the
linear 1-form $\Lambda_\mu$, defined by the vector-bundle map $\mu:
A\rmap T^*M$, satisfies
\begin{equation}\label{eq:taut}
\Lambda_\mu = \mu^* \theta_{can},
\end{equation}
where $\theta_{can}=p_i\dd x^i$ is the canonical 1-form on $T^*M$.
(When $A=M \rmap M$ is the vector bundle with zero fibres,
\eqref{eq:taut} recovers the well-known ``tautological'' property
$\mu^*\theta_{can}=\mu$.)
\end{example}

\begin{lemma}\label{lem:zeromap}
A linear $k$-form $\Lambda$ covers the fibrewise zero map in
\eqref{eq:Lambda} if and only if it is of the form $\Lambda_\mu$ (as
in \eqref{eq:lambamu}) for a vector bundle map $\mu:A\rmap \wedge^k
T^*M$.
\end{lemma}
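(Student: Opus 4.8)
The plan is to prove both implications using the local coordinate characterization of linear forms from Lemma~\ref{lem:linear}(2), since the statement is purely about when the ``$\dd u^d$ part'' of a linear form vanishes. First I would observe that, by Lemma~\ref{lem:linear}, a linear $k$-form $\Lambda$ is written in local coordinates $(x^j,u^d)$ as in \eqref{eq:linearlocal}, namely as a sum of a term involving $u^d\,\dd x^{i_1}\wedge\cdots\wedge\dd x^{i_k}$ and a term involving $\lambda_{i_1\ldots i_{k-1}d}\,\dd x^{i_1}\wedge\cdots\wedge\dd x^{i_{k-1}}\wedge\dd u^d$, where the coefficients $\lambda_{i_1\ldots i_{k-1}d}$ are exactly the components of the base map $\lambda$ in \eqref{eq:Lambda}. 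Hence $\Lambda$ covers the fibrewise zero map precisely when all $\lambda_{i_1\ldots i_{k-1}d}$ vanish, i.e. when the second term in \eqref{eq:linearlocal} is absent and $\Lambda$ has the purely ``horizontal'' form $\frac{1}{k!}\Lambda_{i_1\ldots i_k,d}(x)u^d\,\dd x^{i_1}\wedge\cdots\wedge\dd x^{i_k}$.

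For the ``if'' direction I would simply compute $\Lambda_\mu$ in coordinates: from the definition \eqref{eq:lambamu}, $(\Lambda_\mu)_u = Tq_A|_u^t\mu(u)$ pulls back forms from $M$, so it contains only $\dd x$'s and no $\dd u$'s; this is precisely \eqref{eq:lambdamulocal}, which matches the first summand of \eqref{eq:linearlocal} with $\Lambda_{i_1\ldots i_k,d} = \mu_{i_1\ldots i_k,d}$ and with all $\lambda_{i_1\ldots i_{k-1}d}=0$. Therefore $\Lambda_\mu$ is linear (its local form is of type \eqref{eq:linearlocal}) and covers the zero map. One should also remark that $\Lambda_\mu$ is globally well-defined and independent of coordinates, which is already guaranteed by the intrinsic formula \eqref{eq:lambamu}.

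For the ``only if'' direction, suppose $\Lambda$ is linear and covers the fibrewise zero base map. By the discussion above, in any local chart $\Lambda$ equals $\frac{1}{k!}\Lambda_{i_1\ldots i_k,d}(x)u^d\,\dd x^{i_1}\wedge\cdots\wedge\dd x^{i_k}$. Define a vector-bundle map $\mu:A\rmap\wedge^kT^*M$ locally by $\mu_{i_1\ldots i_k,d}:=\Lambda_{i_1\ldots i_k,d}$, i.e. $\mu(e_d)=\frac{1}{k!}\Lambda_{i_1\ldots i_k,d}\,\dd x^{i_1}\wedge\cdots\wedge\dd x^{i_k}$; comparing with \eqref{eq:lambdamulocal} gives $\Lambda=\Lambda_\mu$ on that chart. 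The only genuine point to check is that these locally defined $\mu$'s patch together into a globally defined bundle map, equivalently that $\mu$ is intrinsically characterized by $\Lambda$. This is clean: since $\Lambda=\Lambda_\mu=Tq_A^t\mu$ on overlaps and $Tq_A^t$ is injective on the image of sections of $\wedge^kT^*M$ (it is the transpose of the surjection $Tq_A$), the form $\Lambda$ determines $\mu$ uniquely, so the local pieces agree. Alternatively, one can give the coordinate-free description $\langle\mu(u),X_1\wedge\cdots\wedge X_k\rangle=\overline\Lambda(\widetilde X_1,\ldots,\widetilde X_k)$ evaluated at $u$, where $\widetilde X_i\in T_uA$ are any lifts of $X_i\in T_{q_A(u)}M$ along the zero section direction — well-definedness of this expression is exactly the vanishing of the base map $\lambda$.

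The main obstacle, such as it is, is purely bookkeeping: making sure the two coordinate descriptions \eqref{eq:linearlocal} and \eqref{eq:lambdamulocal} are matched with the correct combinatorial factors and index conventions, and phrasing the gluing of the local $\mu$'s cleanly. There is no real analytic or structural difficulty here; the lemma is essentially a restatement of Lemma~\ref{lem:linear} together with the definition \eqref{eq:lambamu}, and the proof is short.
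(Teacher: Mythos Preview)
Your proposal is correct. The paper actually acknowledges your approach explicitly at the start of its proof (``We can use Lemma~\ref{lem:linear}, or argue more globally as follows''), but then chooses the global route instead: it identifies $\ker(r)_u = \mathrm{im}(Tq_A|_u^t)$, so that $\Lambda$ covering the zero map means $\Lambda^\sharp$ lands in the image of $Tq_A|_u^t$, and then uses skew-symmetry to conclude that $\Lambda(U_1,\ldots,U_k)$ depends only on the projections $Tq_A(U_j)$, producing $\mu$ intrinsically (with linearity in $u$ and smoothness read off from \eqref{eq:linearlocal}). Your coordinate argument via Lemma~\ref{lem:linear}(2) is the more direct route and avoids the annihilator identification; the paper's argument has the small advantage of giving the intrinsic characterization of $\mu$ up front rather than defining it chart-by-chart and then checking gluing.
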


\begin{proof}
We can use Lemma~\ref{lem:linear}, or argue more globally as
follows. Consider the projection $r: T^*A \rmap A^*$, as in
\eqref{eq:r}. One can directly check that
$$
\ker(r)_u = (\ker(Tq_A|_u))^\circ = \mathrm{im}(T q_A |_u^t),
$$
where $^\circ$ stands for the annihilator. It follows from
\eqref{eq:lambamu} that $r\circ \Lambda_{\mu}^\sharp = 0$, which
means that $\Lambda_\mu$ covers the fibrewise zero map in
\eqref{eq:Lambda}. Conversely, if $\Lambda$ covers the fibrewise
zero map, then $r\circ \Lambda^\sharp = 0$; so, given
$U_1,\ldots,U_{k}\in T_uA$, $\Lambda^\sharp(U_1,\ldots,U_{k-1}) =
Tq_A|_u^* \alpha$ for some $\alpha\in T^*_{q_A(u)}M$. Since
$\Lambda$ is skew symmetric, we conclude that
$\Lambda(U_1,\ldots,U_k)$ only depends on $Tq_A|_u(U_j)$,
$j=1,\ldots,k$. Hence, for each $u\in A$, there exists $\mu(u) \in
\wedge^l T^*_{q_A(u)}M$ such that $(\Lambda)_u = Tq_A|_u^* \mu (u)$.
The linear dependence of $\mu(u)$ on $u$ follows from the linear
dependence of $(\Lambda)_u$ on $u$, see \eqref{eq:linearlocal}; the
resulting vector bundle map $\mu: A\rmap \wedge^k T^*M$ is smooth by
the local expression \eqref{eq:lambdamulocal}.
\end{proof}

\begin{proposition}\label{prop:struc}
There is a one-to-one correspondence between linear $k$-forms
$\Lambda$ on $A$, covering a map $\lambda: \oplus^{k-1}TM\rmap A^*$,
and pairs $(\mu, \nu)$, where $\mu:A \rmap \wedge^{k-1}T^*M$ and
$\nu:A \rmap \wedge^{k}T^*M$ are vector bundle morphisms. The
correspondence is given by
\begin{equation}\label{eq:corresp}
\Lambda = \dd\Lambda_\mu + \Lambda_\nu,
\end{equation}
where $\mu = (-1)^{k-1}\lambda^t$.
\end{proposition}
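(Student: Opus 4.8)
The plan is to construct the bijection explicitly in both directions and then verify that the two constructions are mutually inverse, working mostly in the local coordinates of Lemma~\ref{lem:linear} to keep the computations transparent. Given a linear $k$-form $\Lambda$ covering $\lambda:\oplus^{k-1}TM\rmap A^*$, I would first set $\mu := (-1)^{k-1}\lambda^t : A\rmap \wedge^{k-1}T^*M$ and form the linear $(k-1)$-form $\Lambda_\mu$ as in \eqref{eq:lambamu}, so that $\dd\Lambda_\mu$ is a $k$-form on $A$. The key observation is that $\dd\Lambda_\mu$ is again linear: using the local expression \eqref{eq:lambdamulocal} for $\Lambda_\mu$, namely $(\Lambda_\mu)_u=\frac{1}{(k-1)!}\mu_{i_1\ldots i_{k-1},d}(x)\,u^d\,\dd x^{i_1}\wedge\cdots\wedge\dd x^{i_{k-1}}$, one computes $\dd\Lambda_\mu$ and checks it has the shape \eqref{eq:linearlocal}; moreover a direct comparison of the $\dd u^d$-components shows that $\dd\Lambda_\mu$ covers exactly the base map $\lambda$ (this is where the sign $(-1)^{k-1}$ is forced, since differentiating the wedge $\dd x^{i_1}\wedge\cdots\wedge\dd x^{i_{k-1}}\wedge\dd u^d$ moves $\dd u^d$ past $k-1$ factors). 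Consequently $\Lambda - \dd\Lambda_\mu$ is a linear $k$-form covering the \emph{zero} base map, and Lemma~\ref{lem:zeromap} gives a unique vector-bundle map $\nu:A\rmap\wedge^k T^*M$ with $\Lambda-\dd\Lambda_\mu=\Lambda_\nu$. This produces the pair $(\mu,\nu)$ and establishes \eqref{eq:corresp}.

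For the reverse direction, given $(\mu,\nu)$ I would simply define $\Lambda := \dd\Lambda_\mu + \Lambda_\nu$. Here $\Lambda_\nu$ is linear of the special ``$\nu$-type'' by Lemma~\ref{lem:zeromap} (it covers the zero map), and $\dd\Lambda_\mu$ is linear by the computation in the previous paragraph; since $\Omega^k_{\mathrm{lin}}(A)$ is closed under sums, $\Lambda$ is linear, and it covers the base map $\lambda=(-1)^{k-1}\mu^t$ because $\Lambda_\nu$ contributes nothing to the base map while $\dd\Lambda_\mu$ contributes $\lambda$. To see the two assignments are inverse to each other: starting from $\Lambda$, passing to $(\mu,\nu)$, and rebuilding $\dd\Lambda_\mu+\Lambda_\nu$ returns $\Lambda$ by the very definition of $\nu$; conversely, starting from $(\mu,\nu)$, the recovered $\mu'$ is determined by the base map of $\dd\Lambda_\mu+\Lambda_\nu$, which is $\lambda$, so $\mu'=(-1)^{k-1}\lambda^t=\mu$, and then $\nu'$ is the unique map with $\Lambda_{\nu'}=\Lambda-\dd\Lambda_{\mu}=\Lambda_\nu$, whence $\nu'=\nu$ by the uniqueness in Lemma~\ref{lem:zeromap}.

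The only genuinely non-routine point is the claim that $\dd\Lambda_\mu$ is linear and covers $\lambda$; everything else is either a formal consequence of Lemma~\ref{lem:zeromap} or bookkeeping. I expect the main obstacle to be organizing the coordinate computation of $\dd\Lambda_\mu$ cleanly — in particular tracking the sign and verifying that no $\dot u$-quadratic or higher terms appear — and, if one prefers a coordinate-free argument, checking linearity of $\dd\Lambda_\mu$ via the fact that the de Rham differential on $A$ is compatible with the scalar-multiplication action on the fibres of $A\rmap M$ (equivalently, that $\dd$ commutes with the relevant homogeneity operator), so that it preserves $\Omega^\bullet_{\mathrm{lin}}(A)$ and acts on the covered base maps in the expected way. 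With that single lemma-like fact in hand, the proposition follows by assembling the pieces above.
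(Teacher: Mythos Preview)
Your proposal is correct and follows essentially the same argument as the paper: set $\mu=(-1)^{k-1}\lambda^t$, use the local expression \eqref{eq:lambdamulocal} together with Lemma~\ref{lem:linear} to see that $\dd\Lambda_\mu$ is linear and covers $\lambda$, and then invoke Lemma~\ref{lem:zeromap} to obtain the unique $\nu$. The paper's proof is terser (it only spells out the direction $\Lambda\mapsto(\mu,\nu)$ and leaves the inverse implicit), but the substance is identical.
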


\begin{proof}
Let $\Lambda$ be a linear $k$-form on $A$, and set
$\mu=(-1)^{k-1}\lambda^t$. A direct computation using the local
expression \eqref{eq:lambdamulocal} and Lemma~\ref{lem:linear} shows
that the $k$-form $d\Lambda_\mu$ is linear and covers the same map
$\lambda$, hence the linear $k$-form $\Lambda-\dd\Lambda_\mu$ covers
the fibrewise zero map. By Lemma \ref{lem:zeromap}, there is a
unique $\nu: A \rmap \wedge^k T^* M$ such that $\Lambda -
\dd\Lambda_\mu = \Lambda_\nu$.
\end{proof}

A direct consequence of \eqref{eq:corresp} is that if $\Lambda$ is a
linear form, then so is $\dd\Lambda$.
\begin{example}
Let $\Lambda\in \Omega^2(A)$ be a linear 2-form with $\dd\Lambda=0$.
According to the previous proposition, we can write it as
$\Lambda=\dd\Lambda_\mu + \Lambda_\nu$, and $\Lambda$ being closed
amounts to $\dd \Lambda_\nu =0$; this condition immediately implies
that $\nu=0$, so $\Lambda = d\Lambda_\mu$. Using \eqref{eq:taut}, it
follows that
$$
\Lambda = (\lambda^t)^* \omega_{can},
$$
where $\omega_{can}=-\dd\theta_{can} = \mathrm{d}x^i\wedge
\mathrm{d}p_i$ the canonical symplectic form on $T^*M$ (see
\cite[Sec.~7.3]{KU}, and also \cite[Prop.~4.3]{BCO}).
\end{example}

\subsection{Tangent lifts}\label{subsec:tlift}

We now briefly discuss linear forms obtained via the \textit{tangent
lift} operation \cite{GU,YI} (see also \cite{BCO} and
\cite{Mac-Xu}), that assigns to any $k$-form on a manifold $M$ a
linear $k$-form on the total space of its tangent bundle $p_M:TM\to
M$.

Let us consider the operation
\begin{equation}\label{eq:tau}
\tau: \Omega^l(M)\rmap \Omega^{l-1}(TM),\;\;\; \tau(\beta)|_X :=
(Tp_M|_X)^t(i_X\beta),
\end{equation}
where $X\in TM$ and $l\geq 1$; i.e., for $U_1,\dots,U_{l-1}\in
T_X(TM)$,
$$
i_{U_{l-1}}\ldots i_{U_1}\tau(\beta)|_X
=\beta(X,Tp_M(U_1),\ldots,Tp_M(U_{l-1})).
$$
In the notation of Section \ref{subsec:struc}, $\tau(\beta)$ is a
linear $(l-1)$-form on the vector bundle $A=TM$ of type $\Lambda_\nu$,
where
$$
\nu: TM\rmap \wedge^{l-1}T^*M,\;\;\; \nu(X)=i_X\beta.
$$

It directly follows from Example~\ref{ex:can1} that, if $\omega \in
\Omega^2(M)$, then $\tau(\omega)=(\omega^\sharp)^*\theta_{can}$,
where $\theta_{can}=p_i \dd x^i$ is the canonical 1-form on $T^*M$.

The \textbf{tangent lift} operation,
\begin{equation}\label{eq:tglift}
\Omega^k(M)\rmap \Omega^k(TM),\;\; \alpha \mapsto \alpha_T,
\end{equation}
assigns to $\alpha \in \Omega^k(M)$ the form $\alpha_T \in
\Omega^k(TM)$ defined by the Cartan-like formula
\begin{equation}\label{eq:cartanlike}
\alpha_T = \dd \tau(\alpha) + \tau(\dd \alpha).
\end{equation}
It follows directly from \eqref{eq:cartanlike} that $\alpha_T$ is
linear and that the operation \eqref{eq:tglift} is compatible with
exterior derivatives, in the sense that $(\dd \alpha)_T = \dd
\alpha_T$.

We will also need an equivalent characterization of the tangent
lift, see e.g. \cite{GU}. Given $\alpha \in \Omega^k(M)$, consider
the associated map
$$
\overline{\alpha}: \oplus^k TM \rmap \mathbb{R},\;\;
(X_1,\dots,X_k)\mapsto \alpha(X_1,\ldots,X_k).
$$
Let $\prod^k_{Tp_M}T(TM)$ denote the fibred product with respect to
the vector bundle
$$
Tp_M: T(TM)\rmap TM, \;\; Tp_M(x^j,\dot{x}^j,\delta x^j, \delta
\dot{x}^j) = (x^j,\delta x^j),
$$
where $(x^j,\dot{x}^j,\delta x^j, \delta \dot{x}^j)$ are the local
coordinates on $T(TM)$ induced by the tangent coordinates
$(x^j,\dot{x}^j)$ on $TM$. We have a natural identification
$T(\oplus^kTM)\cong \prod^k_{Tp_M}T(TM)$, so we can view the
differential of the function $\overline{\alpha}$ in $C^\infty(\oplus^kTM)$ as a map
$$
\dd \overline{\alpha} : \prod^k_{Tp_M}T(TM) \rmap \mathbb{R}.
$$
Note that the canonical involution
\begin{equation}\label{eq:J}
J_M : T(TM)\rmap T(TM), \;\;\; J_M(x^j,\dot{x}^j,\delta x^j, \delta
\dot{x}^j) = (x^j,\delta x^j, \dot{x}^j, \delta \dot{x}^j),
\end{equation}
induces an identification
$$
J_M^{(k)}: \prod^k_{p_{TM}} T(TM)\rmap \prod^k_{Tp_M}T(TM).
$$
One can prove (see e.g. \cite{GU}) that, given $\alpha \in
\Omega^k(M)$, its tangent lift $\alpha_T \in \Omega^k(TM)$ is
uniquely determined by the condition
\begin{equation} \label{eq:alphaT}
\overline{\alpha_T} = \dd \overline{\alpha}\circ J_M^{(k)}:
\prod^k_{p_{TM}} T(TM) \rmap \mathbb{R}.
\end{equation}

\section{Linear forms on Lie algebroids}\label{sec:alg}

\subsection{Core and linear sections}\label{subsec:corelinear}

Any vector bundle that fits into a double vector bundle admits two
distinguished types of sections, known as \textit{linear} and
\textit{core} sections; a detailed discussion can be found e.g. in
\cite[Sec. 2.3]{GM} and \cite{Mac-book}. For our purposes, we are
mostly interested in the particular (double) vector bundles
$\oplus^k_A TA$ (and, later in Section~\ref{sec:mult}, also in
$\oplus^k_A T^*A$), so we restrict ourselves to these cases.

Each section $u$ of a vector bundle $A\to M$ defines a core section
$\widehat{u}$ and a linear section $Tu$ of $TA\to TM$ as follows.
The tangent bundle of $A$ along its zero section $M\hookrightarrow
A$ naturally splits as $TA|_M = TM \oplus A$, and we can define, for
each $X\in T_xM$, $\widehat{u}(X):= X(x) + u(x)$, where the sum is
with respect to the previous decomposition of $TA|_M$. The linear
section $Tu:TM\to TA$ is obtained by applying the tangent functor to
$u: M\to A$.

Let us consider local coordinates $(x^j)$ on $M$, a basis of local
sections $\{e_d\}$ of $A$, and dual basis $\{e^d\}$ of $A^*$. As in
Section \ref{sec:linear}, we denote the corresponding coordinates on
$A$ by $(x^j,u^d)$, and on $A^*$ by $(x^j,\xi_d)$, while coordinates
on $TA$ are denoted by $(x^j,u^d,\dot{x}^j,\dot{u}^d)$, and on
$T^*A$ by $(x^j,u^d,p_j,\xi_d)$.

The core and linear sections of $TA\to TM$ defined by a local
section $e_a$ of $A$ are explicitly given by
\begin{equation}\label{eq:secTA}
\he_a(x^j,\dot{x}^j)=(x^j,0,\dot{x}^j, \delta_a^d),\;\;\;\;
Te_a(x^j,\dot{x}^j)=(x^j,\delta_a^d,\dot{x}^j,0),
\end{equation}
where $\delta_a^d$ is the $d$-th component of $e_a$, i.e., $1$ if
$d=a$ or zero otherwise.

More generally, $e_a$ locally defines two types of local sections of
$\oplus_A^k TA \rmap \oplus^k TM$ as follows: the first type is
given, for each $n\in \{1,\dots,k\}$, by
\begin{equation}\label{eq:secPiTA1}
\he_{a,n}(\dot{x}_1\oplus \ldots \oplus \dot{x}_k): =
\widehat{0}(\dot{x}_1)\oplus \ldots \oplus
\widehat{0}(\dot{x}_{n-1})\oplus \he_a(\dot{x}_n) \oplus
\widehat{0}(\dot{x}_{n+1})\oplus \ldots \oplus
\widehat{0}(\dot{x}_k),
\end{equation}
where $\dot{x}_l=(x^j,\dot{x}_l^j)$ belongs to the $l$-th component
of $\oplus^k TM$ and $\widehat{0}
(\dot{x}_l)=(x^j,0,\dot{x}^j_l,0)$; the second type is
\begin{equation}\label{eq:secPiTA2}
(Te_a)^k(\dot{x}_1\oplus \ldots \oplus \dot{x}_k):=
Te_a(\dot{x}_1)\oplus \ldots \oplus Te_a(\dot{x}_k).
\end{equation}

The sections $\he_{a,n}$ and $(Te_a)^k$ are the {core} and {linear}
sections on $\oplus_A^k TA$, respectively.
A key property is that they generate the module of local sections of
$\oplus_A^k TA \rmap \oplus^kTM$. Note also that, under the natural
projection $\oplus_A^kTA \rmap A$, core sections $\he_{a,n}$ are
sent to the zero section of $A\rmap M$, while linear sections
$(Te_a)^k$ map to the section $e_a$.

\subsection{Tangent Lie
algebroids}\label{subsec:Liealg}

Suppose that $A\rmap M$ carries a Lie algebroid structure (see e.g.
\cite{CW,Mac-book}), with Lie bracket $[\cdot,\cdot]_{\st{A}}$ on
$\Gamma(A)$ and anchor map $\rho_{\st{A}}:A \rmap TM$. Then the
vector bundle $TA\rmap TM$ inherits a natural Lie algebroid
structure, known as the \textit{tangent Lie algebroid}, see e.g.
\cite{Mac-Xu1}. We will need local expressions for the tangent Lie
algebroid
 in terms of the coordinates introduced in Section
\ref{subsec:corelinear}.

The Lie algebroid $A\rmap M$ is locally determined by structure
functions $\rho_a^j$ and $C_{ab}^c$ defined by
\begin{equation}\label{eq:liestruc}
\rho_{\st{A}}(e_a)=\rho^j_a\frac{\partial}{\partial x^j},\;\;\;
[e_a,e_b]_{\st{A}}=C_{ab}^c e_c.
\end{equation}
The tangent Lie algebroid structure on $TA \rmap TM$ is defined in
terms of core and linear sections \eqref{eq:secTA} by
\begin{align}
&[\widehat{e}_a,\widehat{e}_b]_{\st{TA}}=0, \;\;
[T{e}_a,\widehat{e}_b]_{\st{TA}}=C^{c}_{ab}\widehat{e}_c, \;\;
[Te_a,Te_b]_{\st{TA}}= C_{ab}^cTe_c + \dot{x}^i \frac{\partial{C}^{c}_{ab}}{\partial x^i}\widehat{e}_c,\label{eq:TA1} \\
& \rho_{\st{TA}}(Te_a)=\rho^{j}_{a}\frac{\partial}{\partial x^j} +
\dot{x}^i \frac{\partial \rho_a^j}{\partial x^i} \frac{\partial}{\partial \dot{x}^j},\;\;\;
\rho_{\st{TA}}(\widehat{e}_a)=\rho^{j}_{a}\frac{\partial}{\partial
\dot{x}^j}.\label{eq:TA2}
\end{align}
In \eqref{eq:TA2}, we have identified points in $T(TM)$, written in
coordinates as $(x^j,\dot{x}^j,\delta x^j, \delta \dot{x}^j)$, with
tangent vectors
$$
\delta x^j \frac{\partial}{\partial x^j} + \delta \dot{x}^j
\frac{\partial}{\partial \dot{x}^j} \Big |_{(x^j,\dot{x}^j)}.
$$

We notice that the tangent Lie algebroid induces a Lie algebroid
structure on the direct sum $\oplus^k_ATA\rmap \oplus^kTM$. This is
a general property of \textit{$\mathcal{VB}$-algebroids} \cite[Sec.
2.1]{GM}, which we directly verify in this example. A simple
consequence of \eqref{eq:TA1} and \eqref{eq:TA2} is that if $U$ and
$V$ are local sections of $TA\rmap TM$, each of type $\he_a$ or
$Te_a$, then
\begin{equation}\label{eq:dsum}
Tp_M (\rho_{\st{TA}}(U)) = \rho_{\st{A}}(p_A(U)), \;\;\;
p_A([U,V]_{\st{TA}})=[p_A(U),p_A(V)]_{\st{A}},
\end{equation}
where $p_M:TM\rmap M$ and $p_A: TA\rmap A$ are the natural
projections. It follows from the first equation in \eqref{eq:dsum}
that if $U_1\oplus \ldots \oplus U_k \in \oplus_A^k TA$ is of type
\eqref{eq:secPiTA1} or \eqref{eq:secPiTA2}, then
$$
Tp_M(\rho_{\st{TA}}(U_l))=Tp_M(\rho_{\st{TA}}(U_m)), \;\; \forall
l,m \in \{1,\ldots,k\}.
$$
As a result, $(\rho_{\st{TA}}(U_1),\ldots, \rho_{\st{TA}}(U_k))$
defines an element in $\prod^k_{Tp_M}T(TM)$. Using the natural
identification $\prod^k_{Tp_M}T(TM) = T(\oplus^k TM)$, we obtain a
vector bundle map $\rho_{k}: \oplus^k_A TA \rmap T(\oplus^kTM)$,
\begin{equation}\label{eq:anchotoplus}
\rho_{k}(U_1\oplus\ldots\oplus U_k):=\rho_{\st{TA}}(U_1)\oplus
\ldots \oplus \rho_{\st{TA}}(U_k).
\end{equation}
Writing $\oplus^kTM$ in local coordinates
$(x^j,\dot{x}_1^j,\ldots,\dot{x}_k^j)$, we have the following
explicit formulas:
\begin{align}
& \rho_k(\he_{a,n}) =
\rho_a^j\frac{\partial}{\partial \dot{x}_n^j},\label{eq:rhocore}\\
& \rho_k((Te_a)^k) =\rho_a^j \frac{\partial}{\partial x^j} +
\sum_{n=1}^{k}W^j_{a,n}\frac{\partial}{\partial
\dot{x}^j_n},\label{eq:rholinear}
\end{align}
where $W^j_{a,n} = \dot{x}_n^i \frac{\partial \rho_a^j}{\partial x^i}
\in C^\infty(\oplus^k TM)$.

The second equation in \eqref{eq:dsum} implies that if $U_1\oplus
\ldots \oplus U_k$ and $V_1\oplus \ldots \oplus V_k$ are local
sections of $\oplus^k_A TA \rmap \oplus^k TM$ of type
\eqref{eq:secPiTA1} or \eqref{eq:secPiTA2}, then
\begin{equation}\label{eq:oplusbr}
[U_1\oplus \ldots \oplus U_k, V_1\oplus \ldots \oplus V_k]_{k} :=
[U_1,V_1]_{\st{TA}}\oplus \ldots \oplus [U_k,V_k]_{\st{TA}}
\end{equation}
is a well-defined local section of $\oplus^k_A TA\rmap \oplus^k TM$.
Explicitly, we have:
\begin{align}
& [\he_{a,n},\he_{b,m}]_k= 0,\label{eq:Tbr1}\\
& [(Te_a)^k,\he_{b,m}]_k= C_{ab}^d \he_{d,m}\label{eq:Tbr2}\\
&[(Te_a)^k,(Te_b)^k]_k= C_{ab}^d (Te_d)^k + \sum_{n=1}^k \dot{x}_{n}^i \frac{\partial{C}^{d}_{ab}}{\partial x^i}
\he_{d,n}.\label{eq:Tbr3}
\end{align}

The induced Lie algebroid structure on $\oplus^k_A TA\rmap \oplus^k
TM$ is defined by $\rho_{k}$ and the extension of
$[\cdot,\cdot]_{k}$ to all sections via the Leibniz rule\footnote{We
adopt the simplified notation $\rho_k$, $[\cdot,\cdot]_k$, instead
of $\rho_{\st{\oplus^k_A TA}}$ and $[\cdot,\cdot]_{\st{\oplus^k_A
TA}}$; in particular, $\rho_1=\rho_{\st{TA}}$ and $[\cdot,\cdot]_1 =
[\cdot,\cdot]_{\st{TA}}$.}.

\subsection{IM-forms}

Let $\Lambda \in \Omega^k(A)$ be a linear $k$-form on a Lie
algebroid $A\rmap M$, $k\geq 1$. Following Prop.~\ref{prop:struc},
let $\mu: A\rmap \wedge^{k-1}T^*M$ and $\nu: A\rmap \wedge^k T^*M$
be the vector-bundle maps such that $\Lambda=\dd\Lambda_\mu +
\Lambda_\nu$. Let us consider the bundle map
\begin{equation}\label{eq:Lvbmap2}
\xymatrix{\oplus_A^{k} TA  \ar[r]^{\;\;\;\;\;\overline{\Lambda}} \ar[d] & \mathbb{R} \ar[d] \\
 \oplus^{k}TM \ar[r]_{} &  \{*\}. }
\end{equation}

The following is the main result of this section.

\begin{theorem}\label{thm:mainresult}
The map \eqref{eq:Lvbmap2} is a Lie algebroid morphism if and only
if the following holds for all $u,v \in \Gamma(A)$:
\begin{align}
i_{\rho(u)}\mu(v)& = - i_{\rho(v)}\mu(u) \label{eq:IM1}\\
 \mu([u,v])& = \Lie_{\rho(u)}\mu(v)-i_{\rho(v)}\dd\mu(u) -
i_{\rho(v)}\nu(u)\label{eq:IM2}\\
 \nu([u,v])&= \Lie_{\rho(u)}\nu(v) - i_{\rho(v)}\dd\nu(u).
\label{eq:IM3}
\end{align}
\end{theorem}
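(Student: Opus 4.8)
The plan is to verify the Lie algebroid morphism condition for the bundle map $\overline{\Lambda}: \oplus^k_A TA \to \mathbb{R}$ directly, by testing it on the generating sections. Recall that a vector bundle map $F: E_1 \to E_2$ covering $f: N_1 \to N_2$ between Lie algebroids is a morphism precisely when (i) the anchors intertwine, i.e.\ $Tf \circ \rho_{E_1} = \rho_{E_2} \circ F$, and (ii) for sections whose images are controlled, the brackets are compatible; since $\mathbb{R} \to \{*\}$ is the zero Lie algebroid, condition (i) becomes $\rho_{\mathbb{R}} \circ \overline{\Lambda} = 0$ (automatic) together with the statement that $\overline{\Lambda}$, viewed as a function on $\oplus^k_A TA$, is annihilated by the anchor direction, and condition (ii) reduces to the requirement that $\overline{\Lambda}$ be a \emph{Lie algebroid cocycle}: $\rho_k(s) \cdot \overline{\Lambda}(s') - \rho_k(s')\cdot \overline{\Lambda}(s) - \overline{\Lambda}([s,s']_k) = 0$ for all sections $s, s'$ of $\oplus^k_A TA \to \oplus^k TM$. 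Because the core sections $\he_{a,n}$ and the linear sections $(Te_a)^k$ generate the module of sections (Section~\ref{subsec:corelinear}), and because the cocycle condition is tensorial enough to be checked on generators, it suffices to test the three cases: (core, core), (linear, core), and (linear, linear).

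First I would record the explicit values of $\overline{\Lambda}$ on the generating sections. Writing $\Lambda = \dd\Lambda_\mu + \Lambda_\nu$ and using the local expressions \eqref{eq:lambdamulocal} and \eqref{eq:linearlocal} together with the coordinate formulas \eqref{eq:secTA}, \eqref{eq:secPiTA1}, \eqref{eq:secPiTA2} for the sections, one computes $\overline{\Lambda}$ contracted against combinations of $\he_{a,n}$'s and $(Te_a)^k$'s. The key outputs are: on $k$ core sections $\overline{\Lambda}$ vanishes (too many fibre-tangent directions); on one linear and $k-1$ core sections $\overline{\Lambda}$ produces a term built from $\mu(e_a)$ paired with the $\dot{x}$-coordinates; and on all-linear inputs one gets a term built from $\nu(e_a)$ together with derivative terms coming from $\dd\Lambda_\mu$. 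These are precisely the building blocks that, when fed into the cocycle identity, will reproduce \eqref{eq:IM1}, \eqref{eq:IM2}, \eqref{eq:IM3}.

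Next I would plug these into the cocycle equation using the bracket formulas \eqref{eq:Tbr1}--\eqref{eq:Tbr3} and the anchor formulas \eqref{eq:rhocore}, \eqref{eq:rholinear}. The (core, core) case: both $\overline{\Lambda}$ values vanish and $[\he_{a,n},\he_{b,m}]_k = 0$, so one needs the mixed terms $\rho_k(\he_{a,n})\cdot \overline{\Lambda}(\he_{b,m})$ to cancel — but evaluated on fewer than $k$ linear directions these too vanish, \emph{except} when $n \neq m$ and we have exactly the right count, and this is exactly where \eqref{eq:IM1} (the skew-symmetry $i_{\rho(u)}\mu(v) = -i_{\rho(v)}\mu(u)$) emerges. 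The (linear, core) case, after using \eqref{eq:Tbr2} and the Leibniz-type action of $\rho_k((Te_a)^k)$ (which involves $\rho^j_a \partial_{x^j}$ plus the $W^j_{a,n}$ terms), collapses to \eqref{eq:IM2}; here the term $-i_{\rho(v)}\nu(u)$ comes from the $\Lambda_\nu$ part of $\Lambda$ while $\Lie_{\rho(u)}\mu(v) - i_{\rho(v)}\dd\mu(u)$ comes from $\dd\Lambda_\mu$ together with a Cartan-type identity — this is where \eqref{eq:cartan} and \eqref{eq:LX} from the preliminaries do the work. The (linear, linear) case, using \eqref{eq:Tbr3}, similarly yields \eqref{eq:IM3}, the $\dot{x}^i_n \partial_{x^i} C^d_{ab}$-term in the bracket matching the derivative of the structure functions on the right-hand side. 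Throughout, I would keep the identities \eqref{eq:cartan}, \eqref{eq:LX}, \eqref{eq:alpha} at hand to convert between contractions of $\dd\Lambda_\mu$ and Lie derivatives of $\mu$.

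The main obstacle I anticipate is bookkeeping: correctly tracking the combinatorial factors $\frac{1}{k!}$, $\frac{1}{(k-1)!}$ and the signs $(-1)^{k-1}$ coming from $\mu = (-1)^{k-1}\lambda^t$ and from the reordering of contractions $i_{U_k}\cdots i_{U_1}$, so that the Cartan-formula manipulation in the (linear, core) step genuinely produces the combination $\Lie_{\rho(u)}\mu(v) - i_{\rho(v)}\dd\mu(u) - i_{\rho(v)}\nu(u)$ with the stated signs rather than some variant. A secondary subtlety is justifying that checking the cocycle identity on the generating sections $\he_{a,n}, (Te_a)^k$ is sufficient; this follows because both sides of the cocycle equation are well-behaved under the Leibniz rule (the failure of the cocycle condition is $C^\infty(\oplus^k TM)$-bilinear and skew in the two section slots after the obvious corrections), so vanishing on generators forces vanishing in general — but this reduction should be stated carefully. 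Once the generator computations are done, the equivalence is immediate: the cocycle identity holds for all sections if and only if the three displayed equations hold for all $u, v \in \Gamma(A)$, since \eqref{eq:liestruc} lets us pass between the structure functions $\rho^j_a, C^c_{ab}$ and arbitrary sections.
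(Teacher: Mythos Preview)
Your strategy is exactly the paper's: reduce the morphism condition to the cocycle identity
\[
\overline{\Lambda}([U,V]_k) = \Lie_{\rho_k(U)}\overline{\Lambda}(V) - \Lie_{\rho_k(V)}\overline{\Lambda}(U)
\]
and test it on the generators $\he_{a,n}$, $(Te_a)^k$, with the three cases core--core, linear--core, linear--linear producing \eqref{eq:IM1}, \eqref{eq:IM2}, \eqref{eq:IM3}.

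There is, however, a concrete error in your values of $\overline{\Lambda}$ on the generators, and it makes your core--core argument self-contradictory. The section $\he_{a,n}$ has $\he_a$ in slot $n$ and the \emph{zero} section $\widehat{0}$ in the remaining $k-1$ slots, all lying over $0\in A$; as tangent vectors this is one vector carrying a vertical $\partial_{u^a}$-component and $k-1$ purely horizontal vectors. Contracting $\Lambda=\dd\Lambda_\mu+\Lambda_\nu$ at $u=0$ (where $\Lambda_\nu$ vanishes) gives
\[
\overline{\Lambda}(\he_{a,n})(\dot{x}) = (-1)^{n-1}\,\mu(e_a)(\dot{x}_1,\ldots,\widehat{\dot{x}_n},\ldots,\dot{x}_k),
\]
which is \emph{not} zero; indeed if it were, then $\Lie_{\rho_k(\he_{a,n})}\overline{\Lambda}(\he_{b,m})$ would vanish trivially and \eqref{eq:IM1} could never appear. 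Similarly $(Te_a)^k$ sits over $e_a\in A$ with all inputs horizontal, and one finds $\overline{\Lambda}((Te_a)^k)(\dot{x}) = (\dd\mu(e_a)+\nu(e_a))(\dot{x}_1,\ldots,\dot{x}_k)$. With these corrected formulas the core--core case reads $\Lie_{\rho_k(\he_{a,n})}\overline{\Lambda}(\he_{b,m}) = \Lie_{\rho_k(\he_{b,m})}\overline{\Lambda}(\he_{a,n})$, and using $\rho_k(\he_{a,n})=\rho_a^j\partial_{\dot{x}^j_n}$ this becomes exactly $i_{\rho(e_a)}\mu(e_b)=-i_{\rho(e_b)}\mu(e_a)$. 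One further point you will hit in the linear--linear case: it does not yield \eqref{eq:IM3} directly but rather
\[
\dd\mu([e_a,e_b])+\nu([e_a,e_b]) = \Lie_{\rho(e_a)}(\dd\mu(e_b)+\nu(e_b)) - \Lie_{\rho(e_b)}(\dd\mu(e_a)+\nu(e_a)),
\]
and you must invoke the already-established \eqref{eq:IM2} (and its exterior derivative) to strip off the $\dd\mu$-terms and isolate \eqref{eq:IM3}. Once these two corrections are made your outline is the paper's proof.
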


For a Lie algebroid $A\to M$ and vector-bundle maps
$$
\mu: A\rmap\wedge^{k-1}T^*M,\;\;\; \nu: A\rmap \wedge^k T^*M,
\;\;k\geq 1,
$$
we say that the pair $(\mu,\nu)$ is an \textbf{IM $k$-form} on $A$
if conditions \eqref{eq:IM1}, \eqref{eq:IM2} and \eqref{eq:IM3} are
satisfied. The terminology IM stands for \textit{infinitesimally
multiplicative}, and it will be clarified in Section \ref{sec:mult}.
The space of IM $k$-forms on $A$ is denoted by
$\Omega_{\mathrm{IM}}(A)$.

We note that Theorem~\ref{thm:mainresult} can be alternatively
phrased in terms of the map $\Lambda^\sharp$ \eqref{eq:Lambda}, as
this map is a Lie algebroid morphism if and only if so is
$\overline{\Lambda}$.

\begin{remark} Given an IM-form $(\mu,\nu)$, it follows from \eqref{eq:IM2}, using the skew-symmetry and Jacobi
identity for the Lie algebroid bracket $[\cdot,\cdot]$ on
$\Gamma(A)$, that $\nu$ automatically satisfies
\begin{align}
& i_{\rho(u)}\nu(v)=-i_{\rho(v)}\nu(u),\label{eq:nuextra1}\\
&  i_{\rho(w)}(\Lie_{\rho(v)}\nu(u)- \Lie_{\rho(u)}\nu(v)) + c.p. =
0 \label{eq:nuextra2}
\end{align}
for all $u,v, w \in \Gamma(A)$, where $c.p.$ stands for
\textit{cyclic permutations} in $u, v, w$.
\end{remark}

\begin{example}\label{ex:IM1}
Consider a Lie algebroid $A\to M$ and a $k$-form $\eta
\in\Omega^{k}(M)$. Then the pair $(\mu,\nu)$ of vector-bundle maps
$$
\mu:A\to \wedge^{k-1}T^*M, \;\; \mu(u)=-i_{\rho(u)}\eta,\;\;\mbox{
and }\;\; \nu: A\to \wedge^kT^*M, \;\; \nu(u)=-i_{\rho(u)}\dd\eta,
$$
defines an IM $k$-form on $A$.
\end{example}

\begin{example}\label{ex:IM2}
Let $A\to M$ be a Lie algebroid, and let $\phi\in \Omega^{k+1}(M)$
be such that $i_{\rho(u)}\dd\phi=0$, $\forall u\in \Gamma(A)$. One
directly checks that the vector-bundle map $\nu: A\rmap
\wedge^{k}T^*M$ given by
\begin{equation}\label{eq:rel}
 \nu(u):= -i_{\rho(u)}\phi
\end{equation}
verifies \eqref{eq:IM3}. The particular IM $k$-forms $(\mu,\nu)$ on
$A$ for which $\nu$ is given as in \eqref{eq:rel} for a closed form
$\phi \in \Omega^{k+1}(M)$ are called \textbf{IM $k$-forms relative
to $\phi$}. These special types of IM forms have first appeared in
\cite{bcwz} (for $k=2$), and more recently in \cite{AC} (for
arbitrary $k$), in the study of multiplicative forms (see
Section~\ref{sec:mult}).
\end{example}

\begin{remark}
Let $\iota_\mathcal{C}:\mathcal{C}\hookrightarrow M$ be an orbit of
the Lie algebroid $A\to M$, i.e., an integral leaf of the
distribution $\rho(A)\subset TM$. If $(\mu,\nu)$ is an IM $k$-form
on $A$, then we have induced forms $\mu_\mathcal{C}\in
\Omega^k(\mathcal{C})$ and $\nu_\mathcal{C}\in
\Omega^{k+1}(\mathcal{C})$ defined by
$$
i_{\rho(u)}\mu_\mathcal{C}=\iota_\mathcal{C}^*\mu(u),\;\;\;
i_{\rho(u)}\nu_\mathcal{C}=\iota_\mathcal{C}^*\nu(u).
$$
It follows from \eqref{eq:IM1} and \eqref{eq:nuextra1} that the
formulas above do define differential forms on $\mathcal{C}$;
moreover, \eqref{eq:IM2} implies that
$\dd\mu_\mathcal{C}=\nu_{\mathcal{C}}$. In particular, we see that
any IM $k$-form on a transitive Lie algebroid is like the one in
Example~\ref{ex:IM1}.
\end{remark}

In order to prove Thm.~\ref{thm:mainresult}, we need some lemmas. We
work in local coordinates $(x^j,u^d)$ on $A$, induced by coordinates
$(x^j)$ on $M$ and the choice of a basis of local sections $\{e_d\}$
of $A$ (see Section \ref{subsec:corelinear}).

\begin{lemma}
Let $\dot{x}=(\dot{x}_1, \ldots ,\dot{x}_k) \in \oplus^kTM$, where
$\dot{x}_l=(x^j,\dot{x}_l^j)$ belongs to the $l$-th copy of $TM$.
Then:
\begin{align}
& \overline{\Lambda}(\he_{a,n}(\dot{x}_1,\ldots,\dot{x}_k))= (-1)^{n-1}I^{\dot{x}}_{k,n+1}I^{\dot{x}}_{n-1,1}\mu(e_a), \label{eq:lambdacore}\\
& \overline{\Lambda}((Te_a)^k(\dot{x}_1,\ldots,\dot{x}_k)) =
I^{\dot{x}}_{k,1}(\dd\mu(e_a)+\nu(e_a)),\label{eq:lambdalinear}
\end{align}
seen as functions in $C^\infty(\oplus^kTM)$ (see \eqref{eq:Iop} for
notation).
\end{lemma}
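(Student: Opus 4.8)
The plan is to prove both identities by a direct coordinate computation in the coordinates $(x^j,u^d)$ on $A$ fixed above, starting from the coordinate description of $\Lambda$ provided by Proposition~\ref{prop:struc}. First I would make $\Lambda=\dd\Lambda_\mu+\Lambda_\nu$ fully explicit: applying the local formula \eqref{eq:lambdamulocal} to the $(k-1)$-form $\mu$ and the $k$-form $\nu$ and differentiating, one obtains
\[
\Lambda \;=\; \sum_d\Big( u^d\big(\dd\mu(e_d)+\nu(e_d)\big)\;+\;\dd u^d\wedge\mu(e_d)\Big),
\]
where each $\mu(e_d)\in\Omega^{k-1}(M)$ and $\nu(e_d)\in\Omega^{k}(M)$ is regarded as a form on $A$ via pullback by $q_A$, hence involves only the differentials $\dd x^j$. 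With $\Lambda$ written this way, the two formulas should come out by plugging the coordinate expressions \eqref{eq:secTA}--\eqref{eq:secPiTA2} of the core and linear sections into $\overline{\Lambda}$ and contracting, the sign bookkeeping being governed by the identity \eqref{eq:alpha} applied with the one-form $\dd u^d$; throughout one uses that contracting a pulled-back form with a vector of the form $\dot{x}^j_l\partial_{x^j}$ amounts to contracting downstairs on $M$, so that the output genuinely lies in $C^\infty(\oplus^k TM)$.

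For \eqref{eq:lambdalinear} I would note, from \eqref{eq:secTA}, that each entry $Te_a(\dot{x}_l)$ of the linear section is the tangent vector $\dot{x}_l^j\partial_{x^j}$ based at the point $u=e_a(x)$, i.e.\ at $u^d=\delta^d_a$. Contracting $\Lambda|_{u=e_a}$ with $k$ vectors all of this form, the summand $\sum_d\dd u^d\wedge\mu(e_d)$ contributes nothing, since $\dd u^d$ is never hit and the $(k-1)$-form $\mu(e_d)$ cannot be contracted $k$ times, while the remaining terms at $u^d=\delta^d_a$ collapse to $I^{\dot{x}}_{k,1}\big(\dd\mu(e_a)+\nu(e_a)\big)$. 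For \eqref{eq:lambdacore} I would use that core sections project to the zero section of $A$, so all entries are based at $u=0$ and only the summand $\sum_d\dd u^d\wedge\mu(e_d)$ of $\Lambda$ survives there; moreover the $n$-th entry is $\he_a(\dot{x}_n)=\dot{x}_n^j\partial_{x^j}+\partial_{u^a}$ while all other entries have the form $\dot{x}_l^j\partial_{x^j}$. Contracting the first $n-1$ entries produces the sign $(-1)^{n-1}$ together with $\sum_d\dd u^d\wedge I^{\dot{x}}_{n-1,1}\mu(e_d)$; contracting the $n$-th entry, the term $i_{\partial_{u^a}}\dd u^d=\delta^d_a$ detaches $I^{\dot{x}}_{n-1,1}\mu(e_a)$, while the piece still carrying a factor $\dd u^d$ is annihilated once the last $k-n$ contractions (all by vectors $\dot{x}_l^j\partial_{x^j}$) are performed, again because a $(k-1)$-form cannot survive $k$ contractions; what remains is exactly $(-1)^{n-1}I^{\dot{x}}_{k,n+1}I^{\dot{x}}_{n-1,1}\mu(e_a)$.

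I do not expect any conceptual obstacle: the statement is really an unwinding of the definitions of $\overline{\Lambda}$ and of the core and linear sections on $\oplus^k_A TA$. The one part that demands care is the sign bookkeeping in the iterated contractions, and, in each of the two cases, the verification that every term still containing a factor $\dd u^d$ vanishes for degree reasons. Systematically writing the contractions in terms of the operators $I^{U}_{m,r}$ and appealing to \eqref{eq:alpha} is what keeps this routine.
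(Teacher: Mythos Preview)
Your proposal is correct and follows essentially the same approach as the paper: a direct coordinate computation starting from the local expression of $\Lambda=\dd\Lambda_\mu+\Lambda_\nu$ and the explicit form of the core and linear sections as tangent vectors. Your compact rewriting $\Lambda=\sum_d\big(u^d(\dd\mu(e_d)+\nu(e_d))+\dd u^d\wedge\mu(e_d)\big)$ is exactly what the paper's coordinate formula encodes, and your degree-counting arguments for why the $\dd u^d$-terms drop out are precisely the content of the ``direct calculation'' the paper invokes without spelling out.
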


\begin{proof}
Writing $\Lambda=\dd\Lambda_\mu + \Lambda_\nu$ and recalling the
local expressions of $\Lambda_\mu$ and $\Lambda_\nu$ (see
\eqref{eq:lambdamulocal}), we have
\begin{align}
\Lambda |_{(x^j,u^d)} = & \frac{1}{(k-1)!}u^d \dd\mu_{i_1\ldots i_{k-1},d}(x)\wedge \dd x^{i_1}\wedge \ldots \wedge \dd x^{i_{k-1}} +\\
& \frac{1}{(k-1)!}\mu_{i_1\ldots i_{k-1},d}(x)\dd u^d \wedge
\dd x^{i_1}\wedge \ldots \wedge \dd x^{i_{k-1}} +\nonumber \\
& \frac{1}{k}\nu_{i_1\ldots i_k,d}(x)u^d \dd x^{i_1}\wedge \ldots
\wedge \dd x^{i_{k}}. \nonumber
\end{align}
We write points in $TA$ with coordinates
$(x^j,u^d,\dot{x}^j,\dot{u}^d)$ in terms of horizontal tangent
vectors $\frac{\partial}{\partial x^j}$ and vertical tangent vectors
$\frac{\partial}{\partial u^d}$ as
$$
\dot{x}^j\frac{\partial}{\partial x^j} +
\dot{u}^d\frac{\partial}{\partial u^d} \Big|_{(x^j,u^d)}.
$$
In particular, recalling the local sections $\he_a, \widehat{0}$ and
$Te_a$ of $TA\to TM$ from Section \ref{subsec:corelinear}, we have
$$
\widehat{0}(\dot{x})= \dot{x}^j\frac{\partial}{\partial
x^j}\Big|_{(x^j,0)},\;\; \he_a(\dot{x})=
\dot{x}^j\frac{\partial}{\partial x^j} + \frac{\partial}{\partial
u^a}\Big|_{(x^j,0)},\;\; Te_a(\dot{x}) =
\dot{x}^j\frac{\partial}{\partial x^j}\Big|_{(x^j,\delta^d_a)},
$$
where $\dot{x}=(x^j,\dot{x}^j)\in TM$. Using \eqref{eq:secPiTA1} and
\eqref{eq:secPiTA2}, formulas \eqref{eq:lambdacore} and
\eqref{eq:lambdalinear} follow from a direct calculation.
\end{proof}

Let $(x^j,\dot{x}^j_1,\ldots,\dot{x}_k^j)$ be local coordinates on
$\oplus^kTM$, and fix $n\in \{1,\ldots,k\}$.

\begin{lemma}\label{lem:trick}
Let $\alpha \in \Omega^l(\oplus^k TM)$ be such that
$\Lie_{\frac{\partial}{\partial \dot{x}_n^j}} \alpha = 0$ $\forall
j$, and consider on $\oplus^kTM$ the local vector fields
$\dot{x}_n={\dot{x}_n^j\frac{\partial}{\partial x^j}}$,
$V^v=v^j(x)\frac{\partial}{\partial \dot{x}_n^j}$, and
$V^h=v^j(x)\frac{\partial}{\partial x^j}$ . Then
$\Lie_{V^v}i_{\dot{x}_n}\alpha = i_{V^h} \alpha$.
\end{lemma}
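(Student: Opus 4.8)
The plan is to prove the identity by intrinsic Cartan calculus, combining the graded commutator between Lie derivative and contraction with the Leibniz rule for Lie derivatives along a function multiple of a coordinate field; the hypothesis $\Lie_{\partial/\partial\dot{x}_n^j}\alpha=0$ enters \emph{only} through this Leibniz rule, so the whole argument stays at the level of the stated assumption. The two identities I would record at the outset are the commutator
$$
\Lie_{V^v}\, i_{\dot{x}_n} = i_{\dot{x}_n}\, \Lie_{V^v} + i_{[V^v,\dot{x}_n]},
$$
and the product rule $\Lie_{fX}\beta = f\,\Lie_X\beta + \dd f\wedge i_X\beta$, applied with $f=v^j(x)$, $X=\partial/\partial\dot{x}_n^j$ and $\beta=\alpha$. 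Since $\Lie_{\partial/\partial\dot{x}_n^j}\alpha=0$ for all $j$, and the coefficients $v^j$ depend only on the base coordinates, the first summand dies and one is left with
$$
\Lie_{V^v}\alpha = \dd v^j\wedge i_{\partial/\partial\dot{x}_n^j}\alpha, \qquad \dd v^j = \frac{\partial v^j}{\partial x^i}\,\dd x^i .
$$
This is the single place where the hypothesis is invoked, and it avoids any appeal to $\alpha$ having no $\dd\dot{x}_n^j$-legs.

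Next I would compute the bracket $[V^v,\dot{x}_n]$ directly in the coordinates $(x^j,\dot{x}_1^j,\ldots,\dot{x}_k^j)$. The essential contribution is $V^v(\dot{x}_n^i)=v^j\,\tfrac{\partial}{\partial\dot{x}_n^j}(\dot{x}_n^i)=v^i$, which reproduces exactly $V^h=v^i\tfrac{\partial}{\partial x^i}$; the remaining piece comes from $\dot{x}_n(v^j)=\dot{x}_n^i\tfrac{\partial v^j}{\partial x^i}$ and is the vertical field $W:=\dot{x}_n^i\tfrac{\partial v^j}{\partial x^i}\tfrac{\partial}{\partial\dot{x}_n^j}$, so that $[V^v,\dot{x}_n]=V^h-W$. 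Substituting into the commutator identity and expanding $i_{\dot{x}_n}\Lie_{V^v}\alpha = i_{\dot{x}_n}\bigl(\dd v^j\wedge i_{\partial/\partial\dot{x}_n^j}\alpha\bigr)$ via the derivation property of $i_{\dot{x}_n}$, the contraction of the horizontal field $\dot{x}_n$ against $\dd v^j$ produces the scalar $i_{\dot{x}_n}\dd v^j=\dot{x}_n^i\tfrac{\partial v^j}{\partial x^i}$ and hence exactly $i_W\alpha$. This vertical contribution cancels the $-i_W\alpha$ coming from $i_{[V^v,\dot{x}_n]}\alpha = i_{V^h}\alpha - i_W\alpha$, and what survives is $i_{V^h}\alpha$, which is the asserted equality.

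The step I expect to be the main obstacle is the bookkeeping of the vertical correction $W$: one must keep $V^h$ and $\dot{x}_n$ (horizontal, in the $\partial/\partial x^i$ directions) cleanly separated from $W$ (vertical, in the $\partial/\partial\dot{x}_n^j$ directions), and check that the scalar generated when $i_{\dot{x}_n}$ contracts $\dd v^j$ coincides precisely with the coefficient defining $W$, so that the two vertical pieces — the one built into the bracket $[V^v,\dot{x}_n]$ and the one emerging inside $i_{\dot{x}_n}\Lie_{V^v}\alpha$ — cancel identically. Using the hypothesis solely to discard the term $v^j\Lie_{\partial/\partial\dot{x}_n^j}\alpha$, and organizing the remaining contributions through the commutator and the derivation rule for $i_{\dot{x}_n}$, is what makes this cancellation transparent and delivers the horizontal term $i_{V^h}\alpha$.
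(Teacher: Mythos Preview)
Your approach coincides with the paper's: both use the commutator $\Lie_{V^v}i_{\dot{x}_n} - i_{\dot{x}_n}\Lie_{V^v} = i_{[V^v,\dot{x}_n]}$ together with the explicit bracket $[V^v,\dot{x}_n] = V^h - W$, where $W = \dot{x}_n^i\tfrac{\partial v^j}{\partial x^i}\tfrac{\partial}{\partial\dot{x}_n^j}$. But your treatment of the term $i_{\dot{x}_n}\Lie_{V^v}\alpha$ has a genuine gap. Expanding $i_{\dot{x}_n}\bigl(\dd v^j \wedge i_{\partial/\partial\dot{x}_n^j}\alpha\bigr)$ by the derivation rule for $i_{\dot{x}_n}$ yields \emph{two} terms,
\[
(i_{\dot{x}_n}\dd v^j)\, i_{\partial/\partial\dot{x}_n^j}\alpha \;-\; \dd v^j \wedge i_{\dot{x}_n}i_{\partial/\partial\dot{x}_n^j}\alpha,
\]
and you have kept only the first (which indeed equals $i_W\alpha$). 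The second summand does not cancel against anything, and under the stated hypothesis it need not vanish: for $M=\mathbb{R}$, $k=1$, and $\alpha = \dd x \wedge \dd\dot{x}_1 \in \Omega^2(TM)$ one has $\Lie_{\partial/\partial\dot{x}_1}\alpha = 0$, yet $\Lie_{V^v}i_{\dot{x}_1}\alpha - i_{V^h}\alpha = \dot{x}_1 v'\,\dd x \neq 0$. So your claim that the argument ``avoids any appeal to $\alpha$ having no $\dd\dot{x}_n^j$-legs'' is precisely backwards: the missing term vanishes exactly when $i_{\partial/\partial\dot{x}_n^j}\alpha = 0$.

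The paper's one-line proof is equally terse and elides the same point; the lemma as literally stated is in fact too general. In every application (the proof of Theorem~\ref{thm:mainresult}) the form $\alpha$ is a form on $M$ pulled back to $\oplus^k TM$, possibly already contracted by some $\dot{x}_l$ with $l\neq n$, so $i_{\partial/\partial\dot{x}_n^j}\alpha = 0$ holds and the extra term drops out. With that stronger hypothesis your argument (and the paper's) goes through; indeed, one then has $\Lie_{V^v}\alpha = 0$ and $i_W\alpha = 0$ outright, and the commutator identity gives the result in one line.
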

\begin{proof}
The proof follows from the identity $i_{[X,Y]}=\Lie_Xi_Y -
i_Y\Lie_X$ and the fact that $[v^i(x)\frac{\partial}{\partial
\dot{x}_n^i},{\dot{x}_n^j\frac{\partial}{\partial x^j}}] =
v^j\frac{\partial}{\partial x^j} - \dot{x}_n^j \frac{\partial
v^i}{\partial x^j}\frac{\partial}{\partial \dot{x}_n^i}$.
\end{proof}

We now proceed to the proof of the main result.
\begin{proof} (of Theorem~\ref{thm:mainresult})

To show that the map $\overline{\Lambda}$ in \eqref{eq:Lvbmap2} is a
Lie algebroid morphism (see e.g. \cite{Mac-book}), the only
condition to be verified is
\begin{equation}\label{eq:liealgcond}
\overline{\Lambda}([U,V]_{k})=
\Lie_{\rho_{k}(U)}\overline{\Lambda}(V)-\Lie_{\rho_{k}(V)}\overline{\Lambda}(U)
\end{equation}
for all $U,V$ sections of $\oplus_A^{k}TA\rmap \oplus^k TM$. Since
sections of type $\he_{a,n}$ (core) and $(Te_b)^k$ (linear) locally
generate the space of sections of $\oplus^k_A TA \rmap \oplus^k TM$,
it suffices to verify \eqref{eq:liealgcond} taking $U$ and $V$ to be
of these types.

\medskip
\noindent{\textit{Core-Core:}} Let us consider two core sections
$\he_{a,n}$ and $\he_{b,m}$. Since $[\he_{a,n},\he_{b,m}]_k = 0$
\eqref{eq:Tbr1}, condition \eqref{eq:liealgcond} in this case
becomes
\begin{equation}\label{eq:corecore}
\Lie_{\rho_k(\he_{a,n})}\oLambda(\he_{b,m}) -
\Lie_{\rho_k(\he_{b,m})}\oLambda(\he_{a,n}) = 0.
\end{equation}
Using \eqref{eq:rhocore} and \eqref{eq:lambdacore}, we see that
$$
\Lie_{\rho_k(\he_{a,n})}\oLambda(\he_{b,m}) =
(-1)^{n-1}\Lie_{\rho_a^i\frac{\partial}{\partial
\dot{x}_n^i}}I^{\dot{x}}_{k,m+1}I_{m-1,1}^{\dot{x}}\mu(e_b).
$$
This condition is trivially satisfied when $n=m$, so we may assume
that $n>m$ (the case $n<m$ leads to the same). Using Lemma
\ref{lem:trick}, we see that the right-hand side of the last
equation agrees with
\begin{align*}
& (-1)^{n-1}I^{\dot{x}}_{k,n+1} i_{\rho(e_a)} I^{\dot{x}}_{n-1,m+1}
I^{\dot{x}}_{m-1,1} \mu(e_b)=\\&(-1)^{n-1}(-1)^{n-2}
I^{\dot{x}}_{k,n+1} I^{\dot{x}}_{n-1,m+1} I^{\dot{x}}_{m-1,1}
i_{\rho(e_a)}\mu(e_b).
\end{align*}
Hence we obtain
$$
\Lie_{\rho_k(\he_{a,n})}\oLambda(\he_{b,m}) = - I^{\dot{x}}_{k,n+1}
I^{\dot{x}}_{n-1,m+1} I^{\dot{x}}_{m-1,1} i_{\rho(e_a)}\mu(e_b).
$$
An analogous computation  leads to
$$
\Lie_{\rho_k(\he_{b,m})}\oLambda(\he_{a,n}) = I^{\dot{x}}_{k,n+1}
I^{\dot{x}}_{n-1,m+1} I^{\dot{x}}_{m-1,1} i_{\rho(e_b)}\mu(e_a).
$$
It follows that \eqref{eq:corecore} is equivalent to
$$
i_{\rho(e_a)}\mu(e_b)=-i_{\rho(e_b)}\mu(e_a).
$$

\medskip
\noindent{\textit{Core-Linear:}} We now consider sections
$\he_{b,m}$ and $(Te_a)^k$, so that \eqref{eq:liealgcond} reads
\begin{equation}\label{eq:corelinear}
\oLambda([(Te_a)^k,\he_{b,m}]_k) =
\Lie_{\rho_k((Te_a)^k)}\oLambda(\he_{b,m}) -
\Lie_{\rho_k(\he_{b,m})}\oLambda((Te_a)^k).
\end{equation}
Using the linearity of $\Lambda$, \eqref{eq:Tbr2} and
\eqref{eq:lambdacore}, we have
\begin{align}\label{eq:cl1}
\oLambda([(Te_a)^k,\he_{b,m}]_k) &= \oLambda(C_{ab}^d\he_{d,m}) =
C_{ab}^d(-1)^{m-1}I_{k,m+1}^{\dot{x}}I_{m-1,1}^{\dot{x}}\mu(e_d)\\
&=(-1)^{m-1}I_{k,m+1}^{\dot{x}}I_{m-1,1}^{\dot{x}}\mu([e_a,e_b]).
\nonumber
\end{align}

For each fixed $n$, consider the functions $W^j_{a,n}=\frac{\partial
\rho_a^j}{\partial x^i}\dot{x}^i_n$ defined in \eqref{eq:rholinear},
noticing the following identity (of local vector fields on
$\oplus^kTM$):
\begin{equation}\label{eq:W}
W^j_{a,n}\frac{\partial}{\partial x^j}=-[\rho(e_a),\dot{x}_n],
\end{equation}
where $\dot{x}_n = \dot{x}_n^i\frac{\partial}{\partial x^i}$. Using
\eqref{eq:W} and Lemma~\ref{lem:trick}, we see that
\begin{align*}
\Lie_{\rho_k((Te_a)^k)}\oLambda(\he_{b,m}) & =\left
(\Lie_{\rho(e_a)}+
\sum_{l=1}^k\Lie_{W_{a,l}^i\frac{\partial}{\partial\dot{x}_l^i}}
\right ) (-1)^{m-1}I_{k,m+1}^{\dot{x}}I_{m-1,1}^{\dot{x}}\mu(e_b)\\
& = (-1)^{m-1}\left ( \Lie_{\rho(e_a)}I_{k,1}^{U}\mu(e_b) -
\sum_{l=1}^k I^{U}_{k,l+1}i_{[\rho(e_a),U_l]}I_{l-1,1}^{U}\mu(e_b)
\right )
\end{align*}
where $U=(U_1,\ldots,U_{k-1})=
(\dot{x}_1,\ldots,\dot{x}_{m-1},\dot{x}_{m+1},\ldots,\dot{x}_k)$. It
follows from \eqref{eq:LX} that
\begin{equation}\label{eq:cl2}
\Lie_{\rho_k((Te_a)^k)}\oLambda(\he_{b,m}) =
(-1)^{m-1}I^{\dot{x}}_{k,m+1}I^{\dot{x}}_{m-1,1}\Lie_{\rho(e_a)}\mu(e_b).
\end{equation}
Using \eqref{eq:lambdalinear} and Lemma \ref{lem:trick}, we obtain
\begin{align*}
\Lie_{\rho_k(\he_{b,m})}\oLambda((Te_a)^k) & =
\Lie_{\rho_a^i\frac{\partial}{\partial
\dot{x}^i_m}}I_{k,1}^{\dot{x}}(\dd\mu(e_a)+\nu(e_a))   =
I^{\dot{x}}_{k,m+1}i_{\rho(e_a)}I^{\dot{x}}_{m-1,1}(\dd\mu(e_a)+\nu(e_a))\\
& =
(-1)^{m-1}I_{k,m+1}^{\dot{x}}I^{\dot{x}}_{m-1,1}i_{\rho(e_a)}(\dd\mu(e_a)+\nu(e_a)).
\end{align*}
Combining this last equation with \eqref{eq:cl1} and \eqref{eq:cl2},
we see that \eqref{eq:corelinear} is equivalent to
\begin{equation}\label{eq:corelinear2}
\mu([e_a,e_b])=\Lie_{\rho(e_a)}\mu(e_b) - i_{\rho(e_b)}\dd\mu(e_a) -
i_{\rho(e_b)}\nu(e_a).
\end{equation}

\medskip
\noindent{\textit{Linear-Linear:}} We finally consider condition
\eqref{eq:liealgcond} for two linear sections:
\begin{equation}\label{eq:linearlinear}
\oLambda([(Te_a)^k,(Te_b)^k]_k)=\Lie_{\rho_k((Te_a)^k)}\oLambda((Te_b)^k)
-\Lie_{\rho_k((Te_b)^k)}\oLambda((Te_a)^k).
\end{equation}
Using \eqref{eq:Tbr3} and the linearity of $\Lambda$, we have
\begin{align}
\oLambda([(Te_a)^k,(Te_b)^k]_k) &=
C_{ab}^d\oLambda((Te_d)^k)+\sum_{n=1}^k \dd C_{ab}^d(\dot{x}_n) \oLambda(\he_{d,n})\nonumber \\
& = C_{ab}^d I_{k,1}^{\dot{x}}(\dd\mu(e_d)+\nu(e_d)) + \sum_{n=1}^k
(-1)^{n-1} \dd C_{ab}^d(\dot{x}_n)
I^{\dot{x}}_{k,n+1}I^{\dot{x}}_{n-1,1}\mu(e_d).\label{eq:ll1}
\end{align}
It follows from \eqref{eq:alpha} (also using that
$I^{\dot{x}}_{k,1}\mu(e_d)=0$, since $\mu(e_d)$ is a $(k-1)$-form)
that
\begin{align*}
I^{\dot{x}}_{k,1} C_{ab}^d \dd\mu(e_d) &=  I^{\dot{x}}_{k,1}
\dd(C_{ab}^d \mu(e_d)) - I^{\dot{x}}_{k,1} (\dd C_{ab}^d \wedge \mu(e_d))\\
& = I^{\dot{x}}_{k,1} \dd(C_{ab}^d \mu(e_d)) -
\sum_{n=1}^k(-1)^{n+1}\dd
C_{ab}^d(\dot{x}_n)I^{\dot{x}}_{k,n+1}I^{\dot{x}}_{n-1,1}\mu(e_d).
\end{align*}
Comparing with \eqref{eq:ll1}, we conclude that
\begin{align}\label{eq:ll2}
\oLambda([(Te_a)^k,(Te_b)^k]_k) & =
I^{\dot{x}}_{k,1}(\dd\mu(C_{ab}^d
e_d) + \nu(C_{ab}^d e_d))\\
& = I^{\dot{x}}_{k,1}(\dd\mu([e_a,e_b]) + \nu([e_a,e_b])). \nonumber
\end{align}
Using Lemma \ref{lem:trick}, \eqref{eq:W} and \eqref{eq:LX}, we
directly obtain
\begin{align}\label{eq:ll3}
\Lie_{\rho((Te_a)^k)}\oLambda((Te_b)^k) & = \left ( \Lie_{\rho(e_a)}
+ \sum_{n=1}^k \Lie_{W_{a,n}^i\frac{\partial}{\partial
\dot{x}_n^i}}\right )I^{\dot{x}}_{k,1}(\dd\mu(e_b)+\nu(e_b))\\
& =
I_{k,1}^{\dot{x}}\Lie_{\rho(e_a)}(\dd\mu(e_b)+\nu(e_b)).\nonumber
\end{align}
Similarly
\begin{equation}\label{eq:ll4}
\Lie_{\rho((Te_b)^k)}\oLambda((Te_a)^k) =
I_{k,1}^{\dot{x}}\Lie_{\rho(e_b)}(\dd\mu(e_a)+\nu(e_a)).
\end{equation}
Combining \eqref{eq:ll2}, \eqref{eq:ll3} and \eqref{eq:ll4}, we see
that \eqref{eq:linearlinear} is equivalent to
$$
\dd\mu([e_a,e_b])+\nu([e_a,e_b])=\Lie_{\rho(e_a)}(\dd\mu(e_b)+\nu(e_b))
- \Lie_{\rho(e_b)}(\dd\mu(e_a)+\nu(e_a)).
$$
We may assume that \eqref{eq:corelinear2} holds, in which case one
can directly check that the last equation is equivalent to
$$
\nu([e_a,e_b])=\Lie_{\rho(e_a)}\nu(e_b) - i_{\rho(e_b)}\dd\nu(e_a).
$$
\end{proof}

\section{Infinitesimal description of multiplicative
forms}\label{sec:mult}

In this section, we relate IM-forms on Lie algebroids with
multiplicative forms on Lie groupoids. Let $\Grd$ be a Lie groupoid
over $M$, with source and target maps denoted by $\sour$, $\tar :
\Grd \rmap M$, respectively, multiplication $m:\Grd^{(2)}\rmap
\Grd$, and unit map $\epsilon: M \rmap \Grd$ (that we often use to
view $M$ as a submanifold of $\Grd$). The Lie algebroid of $\Grd$ is
denoted by $A(\Grd)$, or simply $A$ if there is no risk of
confusion; see Section \ref{subsec:ident}.

A $k$-form $\alpha \in \Omega^k(\Grd)$ is called
\textbf{multiplicative} if
\begin{equation}\label{eq:mult}
m^*\alpha = pr_1^*\alpha + pr_2^*\alpha,
\end{equation}
where $pr_1,pr_2: \Grd^{(2)}\rmap \Grd$ are the natural projections.
Alternatively, one may define multiplicative forms in terms of a
natural groupoid structure on $T\Grd$ over $TM$, known as the
\textit{tangent groupoid}, see e.g. \cite{Mac-book}; it has source
(resp. target) map $T\sour: T\Grd\rmap TM$ (resp. $T\tar: T\Grd\rmap
TM$), multiplication $Tm:(T\Grd)^{(2)}=T\Grd^{(2)}\rmap T\Grd$, and
unit map $T\epsilon :TM \rmap T\Grd$. This groupoid structure can be
naturally extended to the direct sum $\oplus^k_\Grd T\Grd$, $k\geq
1$, making it a Lie groupoid over $\oplus^kTM$, with source (resp.
target) map $\oplus^kT\sour$ (resp. $\oplus^kT\tar$), multiplication
map $\oplus^kTm$, etc.

Let $\alpha\in\Omega^k(\Grd)$, and let us consider the associated
map
\begin{equation}\label{eq:alphabar}
\overline{\alpha}: \oplus^k_\Grd T\Grd \rmap \mathbb{R},\;\;\;
\overline{\alpha}(U_1,\ldots,U_k)= i_{U_k}\ldots i_{U_1}\alpha.
\end{equation}
The following observation is immediate from \eqref{eq:mult}.

\begin{lemma}\label{lem:multip}
$\alpha$ is multiplicative if and only if $\overline{\alpha}$ is a
groupoid morphism. (Here $\mathbb{R}$ is viewed as an additive
group.)
\end{lemma}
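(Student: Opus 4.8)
The plan is to read off both ``multiplicative'' and ``$\overline{\alpha}$ is a groupoid morphism'' as equalities of $k$-forms evaluated on tangent vectors, and to check that they coincide once the tangent groupoid structure on $\oplus^k_\Grd T\Grd$ is unwound.

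First, observe that $\overline{\alpha}$ is a smooth map from the Lie groupoid $\oplus^k_\Grd T\Grd$ over $\oplus^k TM$ to $\mathbb{R}$ over a point, and that the base map is necessarily the constant map $\oplus^k TM\to\{*\}$. Hence compatibility of $\overline{\alpha}$ with the source and target maps is automatic, and compatibility with units and inverses follows from compatibility with the multiplications: a unit of $\oplus^k_\Grd T\Grd$ is sent by $\overline{\alpha}$ to an idempotent of $(\mathbb{R},+)$, hence to $0$, and $\overline{\alpha}(\text{inverse})=-\overline{\alpha}(\,\cdot\,)$ then follows. So ``$\overline{\alpha}$ is a groupoid morphism'' is equivalent to: for every composable pair in $\oplus^k_\Grd T\Grd$,
\[
\overline{\alpha}\big((U_1,\dots,U_k)\cdot(V_1,\dots,V_k)\big)=\overline{\alpha}(U_1,\dots,U_k)+\overline{\alpha}(V_1,\dots,V_k).
\]

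Next I would unwind the identifications. Since $\Grd^{(2)}=\Grd\times_M\Grd$, a tangent vector to $\Grd^{(2)}$ at a composable pair $(g,h)$ is exactly a pair $W=(U,V)$ with $U\in T_g\Grd$, $V\in T_h\Grd$ and $T\sour(U)=T\tar(V)$, i.e.\ exactly a composable pair in the tangent groupoid $T\Grd$; moreover $T\pr_1(W)=U$, $T\pr_2(W)=V$, and $Tm(W)=U\cdot V$. Thus a composable pair $\big((U_1,\dots,U_k),(V_1,\dots,V_k)\big)$ in $\oplus^k_\Grd T\Grd$, with $U_i\in T_g\Grd$, $V_i\in T_h\Grd$, is the same thing as a $k$-tuple $(W_1,\dots,W_k)$ of tangent vectors to $\Grd^{(2)}$ at $(g,h)$, with product $\big(Tm(W_1),\dots,Tm(W_k)\big)$ in $\oplus^k_\Grd T\Grd$. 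Using that pulling back a form amounts to precomposing with a tangent map, one obtains
\[
\overline{\alpha}\big((U_i)_i\cdot(V_i)_i\big)=(m^*\alpha)_{(g,h)}(W_1,\dots,W_k),\quad
\overline{\alpha}\big((U_i)_i\big)=(\pr_1^*\alpha)_{(g,h)}(W_1,\dots,W_k),
\]
and likewise $\overline{\alpha}\big((V_i)_i\big)=(\pr_2^*\alpha)_{(g,h)}(W_1,\dots,W_k)$. Since $(g,h)$ ranges over all of $\Grd^{(2)}$ and $(W_1,\dots,W_k)$ over all $k$-tuples of tangent vectors at $(g,h)$, and a $k$-form is determined by such evaluations, the displayed morphism identity holds for all composable pairs if and only if $m^*\alpha=\pr_1^*\alpha+\pr_2^*\alpha$, which is \eqref{eq:mult}.

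I do not expect a genuine obstacle: the whole argument is the bookkeeping of the identifications $T\Grd^{(2)}=T\Grd\times_{TM}T\Grd$, of $T\pr_j$, $Tm$, and of the evaluation of pulled-back forms. The one point worth stating carefully is that composable pairs in the tangent groupoid $\oplus^k_\Grd T\Grd$ are precisely the tangent vectors to $\Grd^{(2)}$, so that quantifying over them recovers all of $m^*\alpha$ rather than a restriction of it; and that, because the target groupoid lies over a single point, the source, target, unit and inverse conditions in the definition of a groupoid morphism are automatic once multiplication-compatibility is in place. If one prefers to be explicit about units, the vanishing $\overline{\alpha}(\text{units})=0$ can be deduced directly from $\epsilon^*\alpha=0$, which itself follows from \eqref{eq:mult} by pullback along $x\mapsto(\epsilon(x),\epsilon(x))$.
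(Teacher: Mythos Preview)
Your proposal is correct and is precisely the unwinding that the paper has in mind: the paper declares the lemma ``immediate from \eqref{eq:mult}'' without further argument, and your proof is exactly the routine bookkeeping (identifying $T\Grd^{(2)}$ with composable pairs in $T\Grd$, reading off $T\pr_i$ and $Tm$, and noting that source/target/unit/inverse compatibility is automatic for a morphism into $(\mathbb{R},+)$) that justifies this claim.
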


We denote the space of multiplicative $k$-forms on $\Grd$ by
$\Omega^k_{\mathrm{mult}}(\Grd)$ .

\subsection{From multiplicative to IM forms}

Let $\Grd$ be a Lie groupoid over $M$, and consider the tangent lift
operation $\Omega^k(\Grd)\to \Omega^{k}(T\Grd)$, $\alpha\mapsto
\alpha_T$, recalled in Section \ref{subsec:tlift}. Using the natural
inclusion $\iota_A: A=A\Grd \hookrightarrow T\Grd$, we define a map
\begin{equation}\label{eq:liemap}
\LF: \Omega^k(\Grd)\rmap \Omega^k(A), \;\; \alpha \mapsto
\LF(\alpha) = \iota_A^* \alpha_T.
\end{equation}

Given $\alpha\in \Omega^k(\Grd)$,  let us consider the associated
bundle maps $\mu: A\rmap \wedge^{k-1} T^*M$ and $\nu: A \rmap
\wedge^k T^*M$,
\begin{align}
\SP{\mu(u), X_1\wedge\ldots\wedge
X_{k-1}} & = \alpha(u,X_1,\ldots,X_{k-1}), \label{eq:mu}\\
\SP{\nu(u), X_1\wedge\ldots\wedge X_{k}} & =
\dd\alpha(u,X_1,\ldots,X_{k}) \label{eq:nu},
\end{align}
for $X_1,\ldots,X_k \in TM$ and $u\in A$ (here we use the natural
inclusions $TM\hookrightarrow T\Grd|_M$ and $A\hookrightarrow
T\Grd|_M$).

\begin{lemma}\label{lem:Lie1}
The $k$-form $\LF(\alpha) \in \Omega^k(A)$ is linear and satisfies
$$
\LF(\alpha) = \dd\Lambda_\mu + \Lambda_\nu.
$$
\end{lemma}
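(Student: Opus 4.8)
The plan is to compute $\LF(\alpha) = \iota_A^* \alpha_T$ explicitly using the Cartan-like formula \eqref{eq:cartanlike} for the tangent lift, namely $\alpha_T = \dd\tau(\alpha) + \tau(\dd\alpha)$, and then restrict to $A \subset T\Grd|_M$. Since pullback commutes with the de Rham differential, $\iota_A^*\alpha_T = \dd(\iota_A^*\tau(\alpha)) + \iota_A^*\tau(\dd\alpha)$, so it suffices to understand $\iota_A^*\tau(\beta)$ for a form $\beta \in \Omega^l(\Grd)$. Recall from Section~\ref{subsec:tlift} that $\tau(\beta) \in \Omega^{l-1}(T\Grd)$ is the linear form of type $\Lambda_{\tilde\nu}$ on the vector bundle $T\Grd \to \Grd$, where $\tilde\nu: T\Grd \to \wedge^{l-1}T^*\Grd$ is given by $\tilde\nu(V) = i_V\beta$.

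The key step is then to show that for any $\beta \in \Omega^l(\Grd)$, the restriction $\iota_A^*\tau(\beta)$ equals $\Lambda_{\beta^{\text{res}}}$, the linear $(l-1)$-form on the vector bundle $A \to M$ associated (as in \eqref{eq:lambamu}) to the vector-bundle map $A \to \wedge^{l-1}T^*M$, $u \mapsto i_u\beta|_M$ restricted appropriately to $TM$-arguments. Concretely, one checks from the definition $\tau(\beta)|_V = (Tp_\Grd|_V)^t(i_V\beta)$ that, for $u \in A_x$ viewed as a point of $T\Grd|_M$, and for tangent vectors to $A$ at $u$, contracting $\tau(\beta)$ against them only sees their images under $Tq_A$ (i.e. their $TM$-components), because $Tp_\Grd \circ T\iota_A$ collapses the fibre directions of $A$ — here one uses that $p_\Grd \circ \iota_A = q_A$ composed with the unit inclusion $M \hookrightarrow \Grd$. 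This is precisely the defining property \eqref{eq:lambamu} of $\Lambda_\mu$-type forms, with the bundle map being $u \mapsto i_u(\beta|_M)$ with the remaining slots fed $TM$-vectors. Applying this to $\beta = \alpha$ (with $l = k$) gives $\iota_A^*\tau(\alpha) = \Lambda_\mu$ with $\mu$ as in \eqref{eq:mu}, since $\SP{\mu(u), X_1 \wedge \cdots \wedge X_{k-1}} = \alpha(u, X_1, \ldots, X_{k-1})$; applying it to $\beta = \dd\alpha$ (with $l = k+1$) gives $\iota_A^*\tau(\dd\alpha) = \Lambda_\nu$ with $\nu$ as in \eqref{eq:nu}. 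Combining, $\LF(\alpha) = \dd\Lambda_\mu + \Lambda_\nu$, which is linear by the remark following Proposition~\ref{prop:struc} (or directly since $\Lambda_\mu, \Lambda_\nu$ are linear and $\dd$ preserves linearity).

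Alternatively, and perhaps more cleanly, one can use the function-based characterization \eqref{eq:alphaT} of the tangent lift: $\overline{\alpha_T} = \dd\overline{\alpha} \circ J_\Grd^{(k)}$ as a map $\prod^k_{p_{T\Grd}} T(T\Grd) \to \mathbb{R}$. Restricting the inputs to lie over $A \subset T\Grd|_M$ and comparing with the local coordinate expression \eqref{eq:linearlocal} for a linear form, one reads off directly that the "vertical-derivative" part of $\dd\overline{\alpha}$ produces the $\Lambda_\nu$-term (the part linear in the fibre coordinate $u^d$, coming from differentiating $\alpha$'s coefficients along $\Grd$ transverse to $M$... wait, more precisely along the $A$-direction) while the "base" part produces $\dd\Lambda_\mu$; matching coefficients against \eqref{eq:mu}–\eqref{eq:nu} finishes the identification. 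The main obstacle I anticipate is purely bookkeeping: carefully tracking which tangent directions of $T\Grd$ survive after the restriction $\iota_A^*$ and after applying the involution $J_\Grd$, and confirming the sign and combinatorial factors so that the abstract identity $\tau(\beta)|_{A} = \Lambda_{(u\mapsto i_u\beta)}$ holds on the nose rather than up to a sign — this is where a short explicit coordinate computation, using the local forms \eqref{eq:lambdamulocal} and \eqref{eq:linearlocal}, is the safest route, and it is routine once set up. No genuinely hard point arises beyond this.
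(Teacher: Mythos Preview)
Your proposal is correct and follows essentially the same approach as the paper: both compute $\iota_A^*\tau(\beta)$ for a general $\beta$ by using $\tau(\beta)|_V = (Tp_\Grd|_V)^t(i_V\beta)$ together with the factorization $p_\Grd \circ \iota_A = \epsilon \circ q_A$, obtain $\iota_A^*\tau(\alpha) = \Lambda_\mu$ and $\iota_A^*\tau(\dd\alpha) = \Lambda_\nu$, and then apply the Cartan-like formula \eqref{eq:cartanlike}. Your alternative route via \eqref{eq:alphaT} is not used in the paper's proof of this lemma, and the hedging in that paragraph is unnecessary.
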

\begin{proof}
Let $\beta\in \Omega^l(\Grd)$ be any $l$-form on $\Grd$, and let us
consider the $l-1$-form on $A$ given by $\iota_A^*\tau(\beta)$ (see
\eqref{eq:tau}), i.e.,
\begin{align*}
\iota_A^*\tau(\beta)|_u = (T\iota_A|_u)^t
\tau(\beta)|_{\iota_A(u)}&=
(T\iota_A|_u)^t(Tp_\Grd|_{\iota_A(u)})^ti_{\iota_A(u)}\beta\\& =
(T(p_\Grd\circ \iota_A)|_u)^t i_{\iota_A(u)}\beta.
\end{align*}
From the commutative diagram
$$
\xymatrix{ A  \ar[r]^{\iota_A} \ar[d]_{q_A} &
T\Grd \ar[d]^{p_\Grd}  \\
M \ar[r]_{\epsilon} & \Grd, }
$$
we see that
$$
\iota_A^*\tau(\beta)|_u =
(Tq_A|_u)^t(T\epsilon|_{q_A(u)})^ti_{\iota_A(u)}\beta.
$$
It immediately follows (see \eqref{eq:lambamu}) that
$$
\iota_A^*\tau(\alpha) = \Lambda_\mu,\;\;\;
\iota_A^*\tau(\dd\alpha)=\Lambda_\nu.
$$
Using \eqref{eq:cartanlike}, we see that
$$
\Lambda = \iota_A^*(\dd\tau(\alpha) +
\tau(\dd\alpha))=\dd\iota_A^*\tau(\alpha) + \iota_A^*\tau(\dd\alpha)
= \dd\Lambda_\mu + \Lambda_\nu.
$$
\end{proof}

Recall that any groupoid morphism $\psi: \Grd_1 \rmap \Grd_2$
defines a Lie algebroid morphism $\LF(\psi): A\Grd_1 \rmap A\Grd_2$
that fits into the diagram
\begin{equation}\label{eq:liemorphism}
\xymatrix{ T\Grd_1  \ar[r]^{T\psi}  & T\Grd_2  \\
A\Grd_1 \ar[r]_{\LF(\psi)} \ar[u]^{\iota_{A_1}}&  A\Grd_2
\ar[u]_{\iota_{A_2}} }
\end{equation}
When $\alpha\in \Omega^k(\Grd)$ is multiplicative, we saw in Lemma
\ref{lem:multip} that $\overline{\alpha}: \oplus^k_\Grd T\Grd \rmap
\mathbb{R}$ is a groupoid morphism; we consider its infinitesimal
counterpart,
$$
\LF(\overline{\alpha}):A(\oplus^k_\Grd T\Grd)\rmap \mathbb{R},
$$
where now $\mathbb{R}$ is viewed as the trivial Lie algebroid over a
point. The natural projection $p_\Grd:T\Grd\rmap \Grd$ is a groupoid
morphism, and there is a canonical identification of Lie
algebroids\footnote{The Lie algebroid $A(T\Grd)\to TM$ is a
$\mathcal{VB}$-algebroid \cite[Sec. 2.1]{GM} with respect to the
vector bundle structure $\LF(p_\Grd):A(T\Grd)\to A$; the algebroid
structure on $A(T\Grd)$ can be extended to $\prod_{\LF(p_\Grd)}^k
A(T\Grd)$ in terms of core and linear sections, just as described in
Section~\ref{subsec:Liealg}.}
$$
A(\oplus^k_\Grd T\Grd) = \prod_{\LF(p_\Grd)}^k A(T\Grd).
$$
Our next goal is to compare the following two maps:
$$
\overline{\LF(\alpha)}: \oplus^k_A T(A\Grd)\rmap \mathbb{R}\;\;\;
\mbox{ and }\;\;\; \LF(\overline{\alpha}): \prod_{\LF(p_\Grd)}^k
A(T\Grd)\rmap \mathbb{R}.
$$

The involution $J_\Grd:T(T\Grd)\rmap T(T\Grd)$ (see \eqref{eq:J})
defines an identification of Lie algebroids $j_\Grd: T(A\Grd) \rmap
A(T\Grd)$ via the diagram
\begin{equation}\label{eq:diagJ}
\xymatrix{ T(A\Grd)  \ar[r]^{j_\Grd} \ar[d]_{T\iota_{A\Grd}} & A(T\Grd) \ar[d]^{\iota_{A(T\Grd)}}  \\
T(T\Grd) \ar[r]_{J_\Grd} &  T(T\Grd) }
\end{equation}
Note that the property $Tp_\Grd\circ J_\Grd = p_{T\Grd}$ implies
that
$$
\LF(p_\Grd)\circ j_\Grd = p_A.
$$
As a result, we have a natural identification of Lie algebroids,
\begin{equation}\label{eq:jk}
j_\Grd^{(k)} : \oplus^k_A TA = \prod^k_{p_A} T(A\Grd)
\stackrel{\sim}{\rmap} \prod^k_{\LF(p_\Grd)}A(T\Grd),
\end{equation}
fitting into the diagram
\begin{equation}\label{eq:jkdiag}
\xymatrix{ \prod^k_{p_A}T(A\Grd)  \ar[r]^{j_\Grd^{(k)}} \ar[d]_{
(T\iota_{A\Grd})^k} &
\prod_{\LF(p_\Grd)}^k A(T\Grd) \ar[d]^{ (\iota_{A(T\Grd)})^k}  \\
\prod_{p_{T\Grd}}^k T(T\Grd) \ar[r]_{J_\Grd^{(k)}} &
\prod_{Tp_\Grd}^k T(T\Grd). }
\end{equation}

\begin{lemma}\label{lem:lie} Let $\alpha \in \Omega^k(\Grd)$ be
multiplicative. Then
\begin{equation}
\LF(\overline{\alpha})\circ j_\Grd^{(k)} = \overline{\LF(\alpha)}.
\end{equation}
In particular, $\overline{\LF(\alpha)}:\oplus^k_A T(A\Grd)\rmap
\mathbb{R}$ is a Lie algebroid morphism.
\end{lemma}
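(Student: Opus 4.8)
The plan is to compute both sides of the claimed identity by unwinding definitions, and to notice that one and the same canonical involution $J_\Grd$ of \eqref{eq:J} governs both the tangent lift $\alpha\mapsto\alpha_T$, through \eqref{eq:alphaT}, and the Lie-algebroid isomorphism $j_\Grd^{(k)}$ of \eqref{eq:jk}; once this is in place, the identity follows just by chasing arrows in diagram \eqref{eq:jkdiag}. There is essentially no computation involved; the content is bookkeeping with the identifications.

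First I would make $\LF(\overline{\alpha})$ explicit. Since $\overline{\alpha}:\oplus^k_\Grd T\Grd\to\mathbb{R}$ is a morphism into the additive group $\mathbb{R}$, it sends units to $0$, so by \eqref{eq:liemorphism} --- together with $A\mathbb{R}=\mathbb{R}\subset T\mathbb{R}=\mathbb{R}\times\mathbb{R}$ as the tangent (second) factor --- $\LF(\overline{\alpha})$ is just the fibrewise derivative of $\overline{\alpha}$ restricted to the Lie algebroid. Under the identifications $T(\oplus^k_\Grd T\Grd)\cong\prod^k_{Tp_\Grd}T(T\Grd)$ and $A(\oplus^k_\Grd T\Grd)\cong\prod^k_{\LF(p_\Grd)}A(T\Grd)$, for which the inclusion of the Lie algebroid into the tangent bundle is the right vertical arrow $(\iota_{A(T\Grd)})^k$ of \eqref{eq:jkdiag}, this reads
$$
\LF(\overline{\alpha})=\dd\overline{\alpha}\circ(\iota_{A(T\Grd)})^k,
$$
with $\dd\overline{\alpha}$ regarded as a function on $\prod^k_{Tp_\Grd}T(T\Grd)$ as in \eqref{eq:alphaT}. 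Next I would make $\overline{\LF(\alpha)}$ explicit. By definition \eqref{eq:liemap}, $\LF(\alpha)=\iota_A^*\alpha_T$; hence for $U_1,\dots,U_k\in T_a(A\Grd)$ we have $i_{U_k}\cdots i_{U_1}\LF(\alpha)=\alpha_T(T\iota_A U_1,\dots,T\iota_A U_k)$, and comparing with \eqref{eq:alphabar} this gives $\overline{\LF(\alpha)}=\overline{\alpha_T}\circ(T\iota_{A\Grd})^k$, where $(T\iota_{A\Grd})^k$ is the left vertical arrow of \eqref{eq:jkdiag}. Applying the characterization \eqref{eq:alphaT} of the tangent lift with $\Grd$ in place of $M$, namely $\overline{\alpha_T}=\dd\overline{\alpha}\circ J_\Grd^{(k)}$, I obtain
$$
\overline{\LF(\alpha)}=\dd\overline{\alpha}\circ J_\Grd^{(k)}\circ(T\iota_{A\Grd})^k.
$$

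The commutativity of \eqref{eq:jkdiag} says exactly that $(\iota_{A(T\Grd)})^k\circ j_\Grd^{(k)}=J_\Grd^{(k)}\circ(T\iota_{A\Grd})^k$, so combining the two displays above yields
$$
\LF(\overline{\alpha})\circ j_\Grd^{(k)}=\dd\overline{\alpha}\circ(\iota_{A(T\Grd)})^k\circ j_\Grd^{(k)}=\dd\overline{\alpha}\circ J_\Grd^{(k)}\circ(T\iota_{A\Grd})^k=\overline{\LF(\alpha)},
$$
which is the asserted identity. The ``in particular'' clause is then immediate: $\alpha$ multiplicative makes $\overline{\alpha}$ a groupoid morphism (Lemma~\ref{lem:multip}), so $\LF(\overline{\alpha})$ is a Lie-algebroid morphism by functoriality of $\LF$; and since $j_\Grd^{(k)}$ is a Lie-algebroid isomorphism (see \eqref{eq:jk}), $\overline{\LF(\alpha)}=\LF(\overline{\alpha})\circ j_\Grd^{(k)}$ is a Lie-algebroid morphism as well.

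The only delicate point --- and the closest thing to an obstacle --- is to check that the three identifications used above (of the tangent bundles, of the Lie algebroids, and of the inclusion of the latter into the former as $(\iota_{A(T\Grd)})^k$) are mutually compatible, and compatible with diagram \eqref{eq:jkdiag}. This is routine in the calculus of double vector bundles and $\mathcal{VB}$-algebroids, but it does need to be spelled out carefully; everything else is a direct unwinding of definitions.
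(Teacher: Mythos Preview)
Your proof is correct and follows essentially the same approach as the paper's own proof: both compute $\LF(\overline{\alpha})=\dd\overline{\alpha}\circ(\iota_{A(T\Grd)})^k$, invoke the commutativity of diagram \eqref{eq:jkdiag}, and apply the characterization \eqref{eq:alphaT} of the tangent lift to identify the result with $\overline{\iota_A^*\alpha_T}=\overline{\LF(\alpha)}$. Your version simply spells out more of the bookkeeping, including the explicit justification of why $\LF(\overline{\alpha})$ has the claimed form and why the ``in particular'' clause follows.
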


\begin{proof}
By definition, $\LF(\overline{\alpha})=\dd \overline{\alpha}\circ
(\iota_{A(T\Grd)})^k$, and using \eqref{eq:jkdiag}
 and \eqref{eq:alphaT} we obtain
\begin{align*}
\LF(\overline{\alpha})\circ j_\Grd^{(k)} = \dd \overline{\alpha}\circ
(\iota_{A(T\Grd)})^k \circ j_\Grd^{(k)} &= \dd \overline{\alpha}\circ
J_\Grd^{(k)} \circ (T\iota_{A\Grd})^k \\ & =
\overline{\alpha_T}\circ (T\iota_{A\Grd})^k =
\overline{\iota_A^*\alpha_T}.
\end{align*}
\end{proof}

\begin{proposition}\label{prop:lie}
Let $\alpha \in \Omega^k(\Grd)$ be multiplicative, and let $\mu$ and
$\nu$ be defined as in \eqref{eq:mu} and \eqref{eq:nu}. Then
$(\mu,\nu)$ is an IM $k$-form on $A\Grd$.
\end{proposition}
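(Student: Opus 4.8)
The plan is to assemble the three preceding results, as almost all the work has already been done. First, by Lemma~\ref{lem:Lie1} the $k$-form $\LF(\alpha)=\iota_A^*\alpha_T\in\Omega^k(A)$ is linear and decomposes as $\LF(\alpha)=\dd\Lambda_\mu+\Lambda_\nu$, with $\mu$ and $\nu$ the vector-bundle maps defined in \eqref{eq:mu} and \eqref{eq:nu}. In view of the one-to-one correspondence of Proposition~\ref{prop:struc}, this identifies $(\mu,\nu)$ as exactly the pair of vector-bundle maps associated with the linear form $\Lambda:=\LF(\alpha)$; in particular, $(\mu,\nu)$ is precisely the data to which Theorem~\ref{thm:mainresult} applies when $\Lambda=\LF(\alpha)$.

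Next, since $\alpha$ is multiplicative, Lemma~\ref{lem:lie} yields that the associated map $\overline{\LF(\alpha)}\colon\oplus^k_A TA\rmap\mathbb{R}$ is a Lie algebroid morphism (with respect to the tangent Lie algebroid structure on $\oplus^k_A TA$ described in Section~\ref{subsec:Liealg} and the trivial algebroid $\mathbb{R}$ over a point). We then invoke Theorem~\ref{thm:mainresult} with $\Lambda=\LF(\alpha)$: the hypothesis there, that the map \eqref{eq:Lvbmap2} --- which is exactly $\overline{\LF(\alpha)}$ --- is a Lie algebroid morphism, is equivalent to the three IM conditions \eqref{eq:IM1}, \eqref{eq:IM2}, \eqref{eq:IM3} holding for $(\mu,\nu)$. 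Hence $(\mu,\nu)$ is an IM $k$-form on $A\Grd$, as claimed.

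There is no genuine obstacle left at this point: the substantive content lies in Lemma~\ref{lem:lie} (the compatibility of the canonical involution $J_\Grd$ with the Lie-theoretic identifications \eqref{eq:diagJ}--\eqref{eq:jkdiag}, together with the characterization \eqref{eq:alphaT} of the tangent lift) and in the coordinate computation behind Theorem~\ref{thm:mainresult}. The only point requiring minor care is bookkeeping: one must confirm that the pair $(\mu,\nu)$ produced by Proposition~\ref{prop:struc} from the linear form $\LF(\alpha)$ coincides with the pair defined directly from $\alpha$ via \eqref{eq:mu}--\eqref{eq:nu} (which is precisely the statement of Lemma~\ref{lem:Lie1}), and that $\oplus^k_A T(A\Grd)$ is the same object as the $\oplus^k_A TA$ appearing in Theorem~\ref{thm:mainresult}, so that the two Lie-algebroid-morphism conditions genuinely match.
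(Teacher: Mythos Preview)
Your proposal is correct and follows exactly the paper's own proof, which simply states that the result is a direct consequence of Lemmas~\ref{lem:Lie1}, \ref{lem:lie}, and Theorem~\ref{thm:mainresult}. Your additional remarks about the bookkeeping (identifying $(\mu,\nu)$ via Proposition~\ref{prop:struc} and matching $\oplus^k_A T(A\Grd)$ with $\oplus^k_A TA$) are accurate but not needed beyond what those lemmas already establish.
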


\begin{proof}
The result is a direct consequence of Lemmas~\ref{lem:Lie1},
\ref{lem:lie}, and Theorem~\ref{thm:mainresult}.
\end{proof}

\subsection{Integration of IM forms}

Let $\Grd$ be a Lie groupoid over $M$, with Lie algebroid $A=A\Grd$.
Assume that $\Grd$ is \textit{source-simply-connected} (i.e., the
$\sour$-fibres are connected with trivial fundamental group), so
that $\oplus^k_\Grd T\Grd$ is also a source-simply-connected
groupoid\footnote{Given any $X=(X_1,\ldots,X_k)\in \oplus^kT_xM$,
the projection $(p_\Grd)^k:\oplus^k_\Grd T\Grd \rmap \Grd$ makes the
source fibre $((T\sour)^k)^{-1}(X) \subseteq T\Grd$ into an affine
bundle over the source fibre $\sour^{-1}(x)\subseteq \Grd$}. Let
$\Lambda \in \Omega^k(A)$ be a $k$-form on $A$ for which
$\overline{\Lambda}: \oplus^k_A TA \rmap \mathbb{R}$ is a Lie
algebroid morphism.

\begin{lemma}\label{lem:int}
There is a unique multiplicative $k$-form $\alpha \in
\Omega^k(\Grd)$ such that $\LF(\alpha)=\Lambda$ (see
\eqref{eq:liemap}).
\end{lemma}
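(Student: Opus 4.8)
The plan is to integrate the Lie algebroid morphism $\overline{\Lambda}$ to a Lie groupoid morphism by Lie's second theorem, and then argue that this morphism is tensorial, so that it is of the form $\overline{\alpha}$ for a $k$-form $\alpha$. Concretely, using the identification $j_\Grd^{(k)}$ of \eqref{eq:jk}, the hypothesis that $\overline{\Lambda}:\oplus^k_A TA\rmap \mathbb{R}$ is a Lie algebroid morphism says exactly that
$$
\overline{\Lambda}\circ (j_\Grd^{(k)})^{-1}\colon A(\oplus^k_\Grd T\Grd)\rmap \mathbb{R}
$$
is a morphism of Lie algebroids, with $\mathbb{R}$ the trivial algebroid over a point. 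Since $\oplus^k_\Grd T\Grd$ is source-simply-connected, Lie's second theorem for Lie algebroids then produces a unique Lie groupoid morphism $F\colon \oplus^k_\Grd T\Grd\rmap \mathbb{R}$ (with $\mathbb{R}$ an additive Lie group) such that $\LF(F)=\overline{\Lambda}\circ (j_\Grd^{(k)})^{-1}$.

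Next --- and this is the heart of the matter --- I would show that $F$ is fibrewise alternating and multilinear with respect to the vector bundle projection $(p_\Grd)^k\colon \oplus^k_\Grd T\Grd\rmap \Grd$; granting this, $F=\overline{\alpha}$ for a unique $\alpha\in\Omega^k(\Grd)$. The mechanism is that $T\Grd\rightrightarrows TM$ is a $\mathcal{VB}$-groupoid over $\Grd$, so the permutations of the $k$ summands of $\oplus^k_\Grd T\Grd$, the fibrewise scalar multiplications in any one summand, and the fibrewise additions (regarded as maps out of the iterated Whitney sum $\oplus^{k+1}_\Grd T\Grd$) are all Lie groupoid morphisms; precomposing $F$ with them and using that $\mathbb{R}$ is abelian yields new groupoid morphisms into $\mathbb{R}$. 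At the infinitesimal level $\overline{\Lambda}$ is alternating and linear in each entry, being built from iterated contractions of $\Lambda$; hence the corresponding relations --- skew-symmetry under permutations, degree-one homogeneity under scalings, additivity under fibrewise sums --- hold for $\LF(F)$, and therefore, by the uniqueness clause of Lie's second theorem, for $F$ itself. A smooth, fibrewise multilinear, alternating function on $\oplus^k_\Grd T\Grd$ over $\Grd$ is precisely a $k$-form on $\Grd$, so this gives $\alpha\in\Omega^k(\Grd)$ with $F=\overline{\alpha}$.

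Finally, $\alpha$ is multiplicative by Lemma~\ref{lem:multip}, since $\overline{\alpha}=F$ is a groupoid morphism; and Lemma~\ref{lem:lie} gives $\overline{\LF(\alpha)}=\LF(\overline{\alpha})\circ j_\Grd^{(k)}=\LF(F)\circ j_\Grd^{(k)}=\overline{\Lambda}$, so $\LF(\alpha)=\Lambda$ because $\beta\mapsto\overline{\beta}$ is injective. For uniqueness, any other multiplicative $\alpha'$ with $\LF(\alpha')=\Lambda$ has $\overline{\alpha'}$ a groupoid morphism with $\LF(\overline{\alpha'})\circ j_\Grd^{(k)}=\overline{\LF(\alpha')}=\overline{\Lambda}=\LF(F)\circ j_\Grd^{(k)}$ by Lemma~\ref{lem:lie}, hence $\overline{\alpha'}=F=\overline{\alpha}$ by uniqueness in Lie's second theorem, so $\alpha'=\alpha$. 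The main obstacle is the middle step: confirming that the integrated morphism $F$ retains the multilinearity and skew-symmetry of $\overline{\Lambda}$, which requires pinning down the relevant auxiliary groupoid morphisms out of Whitney sums of $T\Grd$ and matching them with their infinitesimal counterparts.
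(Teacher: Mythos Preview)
Your proposal is correct and follows essentially the same route as the paper: integrate $\overline{\Lambda}\circ (j_\Grd^{(k)})^{-1}$ to a groupoid morphism via Lie's second theorem, then verify skew-symmetry, homogeneity, and additivity of the integrated map by checking that the relevant auxiliary maps (permutations, scalings, and the addition map out of $\oplus^{k+1}_\Grd T\Grd$) are groupoid morphisms whose infinitesimal counterparts agree by the multilinearity and alternation of $\Lambda$, and conclude by uniqueness in Lie's second theorem. The paper spells out the three conditions \eqref{eq:I1}--\eqref{eq:I3} explicitly, but the mechanism is exactly the one you describe.
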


\begin{proof}
Since $\oLambda$ is a morphism of Lie algebroids, the identification
\eqref{eq:jk} also leads to a Lie algebroid morphism
\begin{equation}\label{eq:morp}
\overline{\Lambda}\circ (j_\Grd^{(k)})^{-1} =
\prod^k_{\LF(p_\Grd)}A(T\Grd)\cong A(\oplus^k T\Grd) \rmap
\mathbb{R}.
\end{equation}

As $\oplus^k_\Grd T\Grd$ is a source-simply-connected groupoid, we
can use Lie's second theorem (see e.g. \cite{Mac-book}) to obtain a
unique groupoid morphism
\begin{equation}\label{eq:int}
I_\Lambda : \oplus^k_\Grd T\Grd \rmap \mathbb{R}
\end{equation}
integrating the morphism \eqref{eq:morp}, i.e., such that
$\LF(I_\Lambda) = \overline{\Lambda}\circ (j_\Grd^{(k)})^{-1}$. To
check that $I_\Lambda = \overline{\alpha}$, for $\alpha \in
\Omega^k(\Grd)$, it suffices to verify that the following conditions
hold:
 \begin{align}
I_\Lambda(U_1,\ldots,U_i,\ldots,U_j,\ldots,U_k) = & -
I_{\Lambda}(U_1,\ldots,U_j,\ldots,U_i,\ldots,U_k),\label{eq:I1}\\
I_\Lambda(U_1,\ldots,U_{i-1},cU_i,U_{i+1},\ldots,U_k)=& c
I_\Lambda(U_1,\dots,U_k),\label{eq:I2}\\
I_{\Lambda}(U_1,\ldots,U_{i-1},U_i+U'_i,U_{i+1},\ldots,U_k) =&
I_\Lambda(U_1,\dots,U_i,\dots,U_k) + \label{eq:I3}\\ &
I_\Lambda(U_1,\dots,U'_i,\ldots,U_k),\nonumber
\end{align}
for all $U_i,U'_i\in T_g\Grd$, $g\in \Grd$, $c\in \mathbb{R}$, where
$1\leq i < j \leq k$.  As we now show, all conditions can be
verified with the same type arguments (cf. \cite{Mac-Xu}).

To prove that \eqref{eq:I1} holds, one directly checks that the map
$ I_\Lambda^{(ij)}: \oplus^k_\Grd T\Grd\rmap \mathbb{R}$,
$$
I_\Lambda^{(ij)}(U_1,\ldots,U_i,\ldots,U_j,\ldots,U_k) : =  -
I_{\Lambda}(U_1,\ldots,U_j,\ldots,U_i,\ldots,U_k),
$$
is a groupoid morphism, and $\LF(I_\Lambda^{(ij)}):
\prod_{\LF(p_\Grd)}^kA(T\Grd) \rmap \mathbb{R}$ satisfies
\begin{align*}
\LF(I_\Lambda^{(ij)})(V_1,\ldots,V_i,\ldots,V_j,\ldots,V_k)&=-
\LF(I_\Lambda)(V_1,\ldots,V_j,\ldots,V_i,\ldots,V_k)\\
&=-{\Lambda}\circ
(j_\Grd^{(k)})^{-1}(V_1,\ldots,V_j,\ldots,V_i,\ldots,V_k)\\
&=\LF(I_\Lambda)(V_1,\ldots,V_i,\ldots,V_j,\ldots,V_k),
\end{align*}
since $\Lambda$ is skew-symmetric. So
$\LF(I_\Lambda^{(ij)})=\LF(I_\Lambda)$, and the uniqueness of
integration in Lie's second theorem implies that
$I_\Lambda^{(ij)}=I_\Lambda$, which is \eqref{eq:I1}.

Similarly, for a fixed $c\in \mathbb{R}$, one can directly show that
both the left and right-hand sides of \eqref{eq:I2} define groupoid
morphisms $\oplus^k_\Grd T\Grd \rmap \mathbb{R}$, whose
infinitesimal counterparts agree at the level of Lie algebroids due
to the multilinearity of $\Lambda$. Then \eqref{eq:I2} follows again
by the uniqueness part of Lie's second theorem.

The last condition \eqref{eq:I3} can be treated in a completely
analogous way, by first noticing that both sides of \eqref{eq:I3}
define groupoid morphisms $\oplus^{k+1}_\Grd T\Grd \rmap
\mathbb{R}$, where now we need an extra copy of $T\Grd$ for $U_i'$.
Again, these morphisms agree at the infinitesimal level due to the
multilinearity of $\Lambda$, and hence agree globally.

The fact that $\alpha$ is multiplicative follows from
Lemma~\ref{lem:multip}, and the equality $\Lambda = \LF(\alpha)$ is
a consequence of Lemma~\ref{lem:lie}.
\end{proof}

A direct consequence of Lemmas~\ref{lem:lie} and \ref{lem:int} is
that the map
$$
\Omega^k_{\mathrm{mult}}(\Grd) \to \Omega^k(A),\;\;\; \alpha\mapsto
\LF(\alpha),
$$
is a bijection onto the subspace of $k$-forms $\Lambda \in
\Omega^k(A)$ such that $\oLambda : \oplus^k_A TA \to \mathbb{R}$ is
a morphism of Lie algebroids. By the correspondence in
Theorem~\ref{thm:mainresult}, this bijection can be alternatively
phrased in terms of IM-forms on $A$:

\begin{theorem}\label{thm:1-1}
Let $\Grd$ be a source-simply-connected Lie groupoid over $M$ with
Lie algebroid $A\rmap M$. For each positive integer $k$, there is a
1-1 correspondence
\begin{equation}\label{eq:1-1}
\Omega_{\mathrm{mult}}^k(\Grd)\rmap \Omega^k_{\mathrm{IM}}(A), \;\;
\alpha \mapsto (\mu,\nu),
\end{equation}
where $\mu$, $\nu$ are given by
\begin{align}
\SP{\mu(u), X_1\wedge\ldots\wedge
X_{k-1}} & = \alpha(u,X_1,\ldots,X_{k-1}), \label{eq:mu1}\\
\SP{\nu(u), X_1\wedge\ldots\wedge X_{k}} & =
\dd\alpha(u,X_1,\ldots,X_{k}) \label{eq:nu1}.
\end{align}
\end{theorem}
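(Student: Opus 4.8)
The plan is to assemble the pieces already established and perform a short diagram chase. The key preliminary observation is that Lemma~\ref{lem:Lie1} applies to \emph{every} $\alpha\in\Omega^k(\Grd)$, not only to multiplicative ones: it says that $\LF(\alpha)\in\Omega^k(A)$ is linear and equals $\dd\Lambda_\mu+\Lambda_\nu$, where $\mu$ and $\nu$ are exactly the vector-bundle maps defined by \eqref{eq:mu1} and \eqref{eq:nu1}. By the uniqueness clause in Proposition~\ref{prop:struc}, the pair associated with the linear form $\LF(\alpha)$ in the sense of that proposition is therefore precisely this $(\mu,\nu)$. In other words, the assignment \eqref{eq:1-1} is the composite of $\alpha\mapsto\LF(\alpha)$, landing in $\Omega^k_{\mathrm{lin}}(A)$, with the bijection of Proposition~\ref{prop:struc} between linear $k$-forms and pairs $(\mu,\nu)$.

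First I would record that \eqref{eq:1-1} is well defined, i.e., that its image lies in $\Omega^k_{\mathrm{IM}}(A)$; this is exactly Proposition~\ref{prop:lie}. Next, injectivity: if multiplicative forms $\alpha,\alpha'$ produce the same $(\mu,\nu)$, then $\LF(\alpha)=\dd\Lambda_\mu+\Lambda_\nu=\LF(\alpha')$, and the uniqueness statement in Lemma~\ref{lem:int} forces $\alpha=\alpha'$. For surjectivity onto $\Omega^k_{\mathrm{IM}}(A)$, I would start from an arbitrary IM $k$-form $(\mu,\nu)$ and form the linear $k$-form $\Lambda:=\dd\Lambda_\mu+\Lambda_\nu$ as in Proposition~\ref{prop:struc}. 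By Theorem~\ref{thm:mainresult}, the IM conditions on $(\mu,\nu)$ are equivalent to $\oLambda:\oplus^k_A TA\to\mathbb R$ being a Lie algebroid morphism, so Lemma~\ref{lem:int} yields a (unique) multiplicative $\alpha\in\Omega^k(\Grd)$ with $\LF(\alpha)=\Lambda$. Applying Lemma~\ref{lem:Lie1} to this $\alpha$ and again invoking the uniqueness in Proposition~\ref{prop:struc}, the pair attached to $\alpha$ by \eqref{eq:mu1}--\eqref{eq:nu1} is exactly $(\mu,\nu)$; hence \eqref{eq:1-1} is onto.

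There is no genuine obstacle here: all the analytic and Lie-theoretic content sits in Theorem~\ref{thm:mainresult} (the explicit computation of the Lie-algebroid-morphism condition) and in Lemma~\ref{lem:int} (the integration step via Lie's second theorem, which is where source-simple-connectedness of $\Grd$, and hence of $\oplus^k_\Grd T\Grd$, enters). The one point that needs a moment's care, and which I would state explicitly, is that the two a priori different descriptions of ``the pair $(\mu,\nu)$'' agree: the one read off a linear form $\Lambda$ via its base map $\lambda$ and the decomposition $\Lambda-\dd\Lambda_\mu=\Lambda_\nu$ in Proposition~\ref{prop:struc}, versus the one read off $\alpha$ by contracting with $A$ and with $TM$ in \eqref{eq:mu1}--\eqref{eq:nu1}. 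Lemma~\ref{lem:Lie1} is precisely the identity $\LF(\alpha)=\dd\Lambda_\mu+\Lambda_\nu$ that bridges the two, so once it is in hand the theorem follows formally.
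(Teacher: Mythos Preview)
Your proposal is correct and follows essentially the same route as the paper: the paper's proof is the one-liner ``The result follows from Lemma~\ref{lem:Lie1} and Theorem~\ref{thm:mainresult},'' relying on the paragraph just before the theorem, which already packages Lemmas~\ref{lem:lie} and~\ref{lem:int} into the statement that $\alpha\mapsto\LF(\alpha)$ is a bijection from $\Omega^k_{\mathrm{mult}}(\Grd)$ onto the linear $k$-forms whose $\oLambda$ is a Lie-algebroid morphism. Your write-up simply unpacks this diagram chase explicitly (well-definedness via Proposition~\ref{prop:lie}, injectivity and surjectivity via Lemma~\ref{lem:int}, and the identification of the pair $(\mu,\nu)$ via Lemma~\ref{lem:Lie1} and Proposition~\ref{prop:struc}), which is a perfectly faithful expansion of the paper's argument.
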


\begin{proof}
The result follows from Lemma~\ref{lem:Lie1} and
Theorem~\ref{thm:mainresult}.
\end{proof}


The following is a simple example of correspondence in
Theorem~\ref{thm:1-1}.

\begin{example}
Let us equip $A=T^*M\to M$ with the trivial Lie algebroid structure
(both anchor and bracket are identically zero), so we may identify
$\Grd=T^*M$ (with groupoid multiplication given by fibrewise
addition). Fixing $\mu=\mathrm{Id}:T^*M\to T^*M$, then any
vector-bundle map $\nu:T^*M\to \wedge^2 T^*M$ defines an IM 2-form
$(\mu,\nu)$. When $\nu=0$, then $(\mu,\nu)$ corresponds under
\eqref{eq:1-1} to the canonical symplectic form $\omega_{can}$ on
$\Grd=T^*M$; for an arbitrary $\nu$, the corresponding
multiplicative 2-form is given, at each $g=(q^j,p_j)\in T^*M$, by
$$
\omega|_{g}= \omega_{can}|_g + c_M^*\nu(g)
$$
where $c_M:T^*M\to M$ is the natural projection.
\end{example}

Let us list some immediate consequences of Theorem~\ref{thm:1-1},
illustrating how the correspondence \eqref{eq:1-1} restricts to
subclasses of multiplicative and IM forms:

\begin{itemize}
\item[(a)] Let $\eta \in \Omega^{k}(M)$. Following Example \ref{ex:IM1}, we know that $(\mu,\nu)$, where
$\mu(u)=-i_{\rho(u)}\eta$ and $\nu(u)=-i_{\rho(u)}\dd\eta$, defines
an IM $k$-form. One directly verifies that the corresponding
multiplicative $k$-form is $\alpha = \sour^*\eta - \tar^*\eta$.

\item[(b)] Let $\phi \in \Omega^{k+1}(M)$ be a closed
$k+1$-form. Then Theorem~\ref{thm:1-1} gives a bijective
correspondence between IM $k$-forms on $A$ relative to $\phi$ (i.e.,
$\nu(u)=-i_{\rho(u)}\phi$, see Example~\ref{ex:IM2}) and
multiplicative $k$-forms $\alpha$ satisfying $\dd\alpha =
\sour^*\phi - \tar^*\phi$. To verify this fact, just notice that
$\dd\alpha$ is a multiplicative $(k+1)$-form corresponding to an IM
$(k+1)$-form of the type discussed in item (a). This recovers
\cite[Thm.~2.5]{bcwz} when $k=2$ (cf. \cite{BCO}), as well as
\cite[Thm.~2]{AC} when $k$ is an arbitrary positive integer.

\item[(c)] Let $\alpha\in \Omega_{\mathrm{mult}}^{k}(\Grd)$
be a given closed multiplicative $k$-form, with associated IM
$k$-form $(\mu_\alpha,\nu_\alpha)$ (note that $\nu_\alpha=0$,
necessarily). It follows from Theorem~\ref{thm:1-1} that there is a
1-1 correspondence between multiplicative $(k-1)$-forms $\theta$
with $\dd\theta=\alpha$ and vector-bundle maps $\mu: A\to
\wedge^{k-2}T^*M$ satisfying, for all $u,v \in \Gamma(A)$,
$$
\qquad i_{\rho(u)}\mu(v) = -i_{\rho(v)}\mu(u),\;\;\;\;
\mu([u,v])=\Lie_{\rho(u)}\mu(v)-i_{\rho(v)}d\mu(u)-i_{\rho(v)}\mu_\alpha(u).
$$
The reason is that $(\mu,\mu_\alpha)$ is the IM $(k-1)$-form
associated with $\theta$ (note that $\mu_\alpha$ satisfies
\eqref{eq:IM3} as a result of $(\mu_\alpha,\nu_\alpha)$ being an IM
$k$-form for $\alpha$ and $\nu_\alpha$ being zero). This
correspondence is the content of \cite[Thm.~3]{AC}.

\end{itemize}

For $k=2$, one has further refinements of Theorem~\ref{thm:1-1}
based on the general study of multiplicative 2-forms carried out in
\cite[Section~4]{bcwz}, leading to natural generalizations of
twisted Poisson and Dirac structures (in the sense of \cite{SW}). On
the vector bundle $TM\oplus T^*M$, consider the pairing
$\SP{(X,\alpha),(Y,\beta)}=\beta(X)+\alpha(Y)$, and the natural
projections $pr_T:TM\oplus T^*M\to TM$ and $pr_{T^*}:TM\oplus
T^*M\to T^*M$. As in Theorem~\ref{thm:1-1}, we denote by $\Grd$ the
source-simply-connected Lie groupoid with Lie algebroid  $A\to M$.

\begin{itemize}

\item[(d)] Given an IM 2-form $(\mu,\nu)$ on $A$, we
consider the vector-bundle map $ (\rho,\mu): A \to TM\oplus T^*M$
and its image
\begin{equation}\label{eq:L}
L=\{(\rho(u),\mu(u))\;|\; u\in A\}\subset TM\oplus T^*M,
\end{equation}
which is a subbundle whenever it has constant rank. By
\eqref{eq:IM1}, over each point of $M$, $L$ is isotropic with
respect to the pairing $\SP{\cdot,\cdot}$. It follows from
\cite[Cor.~4.8]{bcwz} that, under the assumption that
$\dim(\Grd)=2\dim(M)$, the correspondence \eqref{eq:1-1} restricts
to a bijection be\-tween multiplicative 2-forms $\omega$ such that
\begin{equation}\label{eq:ker}
\ker(T\sour)_x\cap \ker(\omega)_x\cap \ker(T\tar)_x = \{0\},\;\;\;\;
\forall x\in M,
\end{equation}
and IM 2-forms $(\mu,\nu)$ for which $L=L^\perp$ (i.e., $L$ is
\textit{lagrangian} with respect to $\SP{\cdot,\cdot}$; in
particular, it is a subbundle with $\mathrm{rank}(L)=\dim(M)$) and
\begin{equation}\label{eq:AL}
(\rho,\mu):A \rmap L\subset TM\oplus T^*M
\end{equation}
is an isomorphism of vector bundles. Moreover, $\tar:\Grd\to M$
relates $\omega$ and $L$ as a \textit{forward Dirac map} (see, e.g.,
\cite[Sec.~2.1]{bcwz}). If we define $\nu_L:L\to \wedge^2T^*M$ by
$\nu_L((\rho(u),\mu(u)))=\nu(u)$, then the identification
\eqref{eq:AL} induces a Lie algebroid structure on $L$ with anchor
$pr_T|_L$ and bracket on $\Gamma(L)$ given by
\begin{equation}\label{eq:nubrk}
[(X,\alpha),(Y,\beta)]_L :=([X,Y],\Lie_X\beta - i_Yd\alpha -
i_Y\nu_L(X,\alpha)).
\end{equation}

Conversely, if a lagrangian subbundle $L\subset TM\oplus T^*M$ is
equipped with $\nu_L:L\to \wedge^2T^*M$ for which \eqref{eq:nubrk}
is a Lie bracket on $\Gamma(L)$, then $A=L$ is a Lie algebroid with
anchor $pr_T|_L$; if $\nu_L$
 also satisfies \eqref{eq:IM3}, then $(pr_{T^*}|_L,\nu_L)$ is an IM
2-form. Then, under the bijection \eqref{eq:1-1},
$(pr_{T^*}|_L,\nu_L)$ corresponds  to a multiplicative 2-form
$\omega$ on $\Grd$ satisfying \eqref{eq:ker}. Taking $\nu_L$ to be
of type $\nu_L(X,\alpha)=-i_X\phi$ for a closed 3-form $\phi\in
\Omega^3(M)$, we recover the integration of twisted Dirac structures
by twisted presymplectic groupoids of \cite[Sec.~2]{bcwz}.

\item[(e)] When a multiplicative 2-form is nondegenerate, then
$\dim(\Grd)=2\dim(M)$ automatically (see, e.g.,
\cite[Lem.~3.3]{bcwz}), and \eqref{eq:ker} trivially holds. Under
\eqref{eq:1-1}, this situation corresponds to the case where the IM
2-form $(\mu,\nu)$ is such that $\mu: A\to T^*M$ is an isomorphism;
in other words, $L$ is the graph of a bivector field $\pi$ on $M$:
$L=\{(i_\alpha\pi,\alpha)\;|\; \alpha\in T^*M\}$. Following (d)
above, the fact that $\Gamma(L)$ is closed under the bracket
\eqref{eq:nubrk} is expressed by the compatibility condition
$$
\frac{1}{2}[\pi,\pi](\alpha,\beta,\gamma)= \nu_L((i_\alpha\pi,
\alpha))(i_\beta\pi,i_\gamma\pi)
$$
for all $\alpha,\beta,\gamma \in \Omega^1(M)$. When $\nu_L$ is of
type $\nu_L(X,\alpha)=-i_X\phi$ for a closed 3-form $\phi\in
\Omega^3(M)$, one recovers twisted Poisson structures, and
\eqref{eq:1-1} gives their integration to twisted symplectic
groupoids (cf. \cite{CX}).

\end{itemize}

\begin{remark}[Higher Dirac structures]
Just as Dirac structures are special cases of IM 2-forms, the {\em
higher} Dirac structures of \cite{Zam} are particular cases of
higher-degree IM $k$-forms; so Theorem~\ref{thm:1-1} can be used to
integrate higher Dirac structures as well (c.f.
\cite[Prop.~3.7]{Zam}).
\end{remark}

\begin{remark}[Path-space construction]
We now relate Theorem~\ref{thm:1-1} to the {\em path-space} approach
to integration found in \cite{bcwz,catfel,CX}. For an integrable Lie
algebroid $A$, there is an explicit model for its integrating
source-simply-connected Lie groupoid $\Grd(A)$ \cite{CF,severa};
namely, $\Grd(A)$ is the quotient $PA/\sim$, where $PA$ is the
subspace of {\em $A$-paths} in the Banach manifold of all $C^1$
paths from the interval $I=[0,1]$ into $A$ with $C^2$ projection to
$M$, and $\sim$ is the equivalence relation defined by {\em
$A$-homotopies}, see \cite{CF}. If $\Lambda \in \Omega^k(A)$ is a
$k$-form on $A$, we use the evaluation map $ev: PA \times I \to A,
(a(\cdot),t) \mapsto a(t)$ to define a $k$-form $\widetilde{\Lambda}
\in \Omega^k(PA)$ by the formula
\begin{equation} \label{eq: tildeLambda}
 \widetilde{\Lambda}= \int_I dt \ ev^*\Lambda.
\end{equation}
One verifies that, if $\Lambda \in \Omega^k(A)$ is a linear
$k$-form, then the $k$-form $\widetilde{\Lambda}$ \eqref{eq:
tildeLambda} is \emph{basic} with respect to the quotient projection
$q:PA \to \Grd(A)$, i.e., $\widetilde{\Lambda} = q^*\alpha$ for
 $\alpha \in \Omega^k(\Grd(A))$, if and only if the induced map
$\bar{\Lambda}$ \eqref{eq:lbar}  is a Lie algebroid morphism (i.e.,
if $\Lambda$ is an IM $k$-form). In this case, the multiplicative
$k$-form obtained by integration of the morphism $\bar{\Lambda}$ to
$\Grd(A)$ (see Lemma~\ref{lem:int}) agrees with $\alpha$, showing
how the correspondence in Thm.~\ref{thm:1-1} is viewed in the light
of the path-space method (generalizing the approach in
\cite[Sec.~5]{bcwz}).
\end{remark}

\section{Relation with the Weil algebra and Van Est
isomorphism}\label{sec:VE}

This section clarifies how linear and IM-forms on Lie algebroids fit
into the Weil algebra of \cite[Sec.~3]{AC}, and how the
infinitesimal description of multiplicative forms relates to the
general Van Est isomorphism of \cite[Sec.~4]{AC}.

Let $A$ be a Lie algebroid over $M$. We consider the associated
\textit{Weil algebra} $W(A)$ as in \cite[Sec.~3]{AC}, which is a
bi-graded differential algebra. The space of elements of degree
$(p,k)$ is denoted by $W^{p,k}(A)$, and the differential on $W(A)$
is a sum of differentials $d^h + d^v$, where
$$
d^h: W^{p,k}(A)\rmap W^{p+1,k}(A),\;\;\; d^v:W^{p,k}(A)\rmap
W^{p,k+1}(A).
$$
We will be mostly concerned with $W^{p,k}(A)$ for $p=0,1,2$.

For $p=0$, we have $W^{0,k}(A)=\Omega^k(M)$. An element $\Lambda_W
\in W^{1,k}(A)$ is given by a pair $((\Lambda_W)_0,(\Lambda_W)_1)$,
where
\begin{equation}\label{eq:LWd1}
(\Lambda_W)_0 : \Gamma(A)\rmap \Omega^k(M),\;\;\; (\Lambda_W)_1 \in
\Omega^{k-1}(M,A^*),
\end{equation}
subject to the compatibility condition
\begin{equation}\label{eq:compat}
(\Lambda_W)_0(f u) = f(\Lambda_W)_0(u) - df\wedge (\Lambda_W)_1(u),
\end{equation}
for $f\in C^\infty(M)$, $u\in \Gamma(A)$, and $(\Lambda_W)_1$ viewed
as a $C^\infty(M)$-linear map
\begin{equation}
(\Lambda_W)_1: \Gamma(A)\rmap \Omega^{k-1}(M).
\end{equation}
An element $c_W \in W^{2,k}(A)$ is a triple $((c_W)_0,
(c_W)_1,(c_W)_2)$, where
\begin{align}\label{eq:L2}
& (c_W)_0 : \Gamma(A)\times \Gamma(A) \rmap \Omega^k(M),\;\; (c_W)_1
: \Gamma(A)\rmap \Omega^{k-1}(M,A^*),
\\\nonumber &
(c_W)_2 \in \Omega^{k-1}(M,S^2(A^*)),
\end{align}
and such that $(c_W)_0$ is skewsymmetric and $\mathbb{R}$-bilinear,
subject to suitable compatibility conditions (extending
\eqref{eq:compat}) that we will not need explicitly.

We need to recall the expression for $d^h$ restricted to
$W^{1,k}(A)$. By definition (see \cite[Sec.~3.1]{AC}), for
$\Lambda_W = ((\Lambda_W)_0,(\Lambda_W)_1)\in W^{1,k}(A)$,
$d^h\Lambda_W \in W^{2,k}(A)$ is given by (cf. \eqref{eq:L2})
\begin{align}
&(d^h\Lambda_W)_0(u,v) = - (\Lambda_W)_0([u,v]) + \Lie_{\rho(u)}((\Lambda_W)_0(v)) -\Lie_{\rho(v)}((\Lambda_W)_0(u)),\label{eq:LW0}\\
& (d^h\Lambda_W)_1(u)(v) = \Lie_{\rho(u)}((\Lambda_W)_1(v))-(\Lambda_W)_1([u,v])+ i_{\rho(v)}((\Lambda_W)_0(u)),\label{eq:LW1}\\
& (d^h\Lambda_W)_2(u)= -i_{\rho(u)}((\Lambda_W)_1(u)),\label{eq:LW2}
\end{align}
for $u,v \in \Gamma(A)$.

We also need the expression for $d^v$ in the following particular
situation. Any bundle map $\mu: A \to \wedge^k T^*M$ (equivalently
seen as a $C^\infty(M)$-linear map $\mu:\Gamma(A)\rmap \Omega^k(M)$)
defines an element $\mu_W \in W^{1,k}(A)$ by
\begin{equation}\label{eq:muW}
(\mu_W)_0 =\mu, \;\;\; (\mu_W)_1=0.
\end{equation}
In this case, $d^v(\mu_W)\in W^{1,k+1}(A)$ is defined by (see
\cite[Sec.~3.1]{AC})
\begin{equation}\label{eq:dvmu}
(d^v\mu_W)_0(u)= -\dd \mu(u),\;\;\; (d^v\mu_W)_1=\mu.
\end{equation}

\begin{proposition}\label{prop:IMcocicle}
Consider the map $\psi: \Omega^k_{\mathrm{lin}}(A)\rmap W^{1,k}(A)$,
$k=1,2,\ldots$,
$$
\Lambda=\dd\Lambda_\mu + \Lambda_\nu \, \mapsto \, \Lambda_W :=
-d^v\mu_W + \nu_W.
$$
The following holds:
\begin{enumerate}
\item $\psi$ induces a $C^\infty(M)$-linear isomorphism
$\Omega^\bullet_{\mathrm{lin}}(A)\stackrel{\sim}{\rmap}W^{1,\bullet}(A)$.
\item $\psi \circ \dd = - d^v\circ \psi$.
\item $\psi$ restricts to a linear isomorphism $\Omega^k_{\mathrm{IM}}(A)\stackrel{\sim}{\rmap}
\Ker(d^h|_{W^{1,k}(A)})$.
\end{enumerate}
\end{proposition}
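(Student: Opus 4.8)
The plan is to verify the three claims by direct computation, using the explicit formulas for $d^h$ and $d^v$ recalled in \eqref{eq:LW0}--\eqref{eq:dvmu} together with the structure result Proposition~\ref{prop:struc}.

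For (1), I would first unwind the definition of $\psi$. Given $\Lambda = \dd\Lambda_\mu + \Lambda_\nu \in \Omega^k_{\mathrm{lin}}(A)$, the pair $(\mu,\nu)$ is uniquely determined by Proposition~\ref{prop:struc}, so $\Lambda\mapsto(\mu,\nu)$ is a bijection $\Omega^k_{\mathrm{lin}}(A)\to \{(\mu,\nu)\}$, visibly $C^\infty(M)$-linear. On the other side, by \eqref{eq:dvmu} the element $-d^v\mu_W + \nu_W \in W^{1,k}(A)$ has components $(\Lambda_W)_0(u) = \dd\mu(u) + \nu(u)$ and $(\Lambda_W)_1(u) = -\mu(u)$. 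Conversely, an arbitrary $\Lambda_W \in W^{1,k}(A)$ is a pair $((\Lambda_W)_0,(\Lambda_W)_1)$ subject only to the compatibility \eqref{eq:compat}; setting $\mu := -(\Lambda_W)_1$ and $\nu := (\Lambda_W)_0 - \dd\mu$, one checks that $\nu$ is then $C^\infty(M)$-linear precisely because \eqref{eq:compat} holds (the $\dd f$ terms cancel against those produced by $\dd\mu(fu)$). This exhibits the inverse of $\psi$ and proves (1). The $C^\infty(M)$-linearity and the compatibility with the bigrading are immediate from the formulas.

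For (2), I would simply compare $\psi(\dd\Lambda)$ with $-d^v(\psi(\Lambda))$. Since $\dd\Lambda$ is again linear (a consequence of \eqref{eq:corresp}, noted right after Proposition~\ref{prop:struc}), and $\dd(\dd\Lambda_\mu + \Lambda_\nu) = \dd\Lambda_\nu$, one has $\dd\Lambda = \dd\Lambda_\nu + \Lambda_0$, i.e. its $(\mu,\nu)$-pair is $(\nu, 0)$. Hence $\psi(\dd\Lambda) = -d^v\nu_W$. On the other hand $-d^v(\psi(\Lambda)) = -d^v(-d^v\mu_W + \nu_W) = d^v d^v\mu_W - d^v\nu_W = -d^v\nu_W$, using $d^vd^v = 0$. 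So the two sides agree.

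For (3), this is where the substance lies. By (1), $\psi$ is a linear isomorphism onto $W^{1,k}(A)$, so it suffices to show that $\Lambda$ is an IM $k$-form if and only if $d^h(\psi(\Lambda)) = 0$. Writing $\Lambda_W = \psi(\Lambda)$, so that $(\Lambda_W)_0(u) = \dd\mu(u) + \nu(u)$ and $(\Lambda_W)_1(u) = -\mu(u)$, I would substitute these into the three components \eqref{eq:LW0}, \eqref{eq:LW1}, \eqref{eq:LW2} of $d^h\Lambda_W$ and set each to zero. Component \eqref{eq:LW2} becomes $i_{\rho(u)}\mu(u) = 0$, which by polarization is equivalent to \eqref{eq:IM1}. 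Component \eqref{eq:LW1} reads $\mu([u,v]) = \Lie_{\rho(u)}\mu(v) + i_{\rho(v)}(\dd\mu(u) + \nu(u))$ after a sign rearrangement; comparing with \eqref{eq:IM2} one must be careful that the paper's conventions give $+i_{\rho(v)}$ in one place and $-i_{\rho(v)}$ in the other, so the verification amounts to checking that $(d^h\Lambda_W)_1 = 0 \iff \eqref{eq:IM2}$ once one tracks the sign conventions in \eqref{eq:LW1}. Component \eqref{eq:LW0} reads $(\Lambda_W)_0([u,v]) = \Lie_{\rho(u)}(\Lambda_W)_0(v) - \Lie_{\rho(v)}(\Lambda_W)_0(u)$, i.e. $\dd\mu([u,v]) + \nu([u,v]) = \Lie_{\rho(u)}(\dd\mu(v)+\nu(v)) - \Lie_{\rho(v)}(\dd\mu(u)+\nu(u))$; I would then argue, exactly as in the Linear-Linear step of the proof of Theorem~\ref{thm:mainresult}, that \emph{assuming} \eqref{eq:IM2} already holds, this last equation is equivalent to \eqref{eq:IM3}. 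The main obstacle is purely bookkeeping: matching the sign and ordering conventions of \cite{AC}'s differentials $d^h$, $d^v$ (and of $\mu_W$, $\nu_W$ via \eqref{eq:muW}, \eqref{eq:dvmu}) against the IM conditions \eqref{eq:IM1}--\eqref{eq:IM3}; no genuinely new idea beyond the computation already performed for Theorem~\ref{thm:mainresult} is needed.
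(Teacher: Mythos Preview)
Your proposal is correct and follows essentially the same route as the paper's proof: the inverse of $\psi$ is constructed by setting $\mu=-(\Lambda_W)_1$ and reading off $\nu$, part (2) is the same one-line computation using $(d^v)^2=0$, and part (3) is the same component-by-component matching of \eqref{eq:LW0}--\eqref{eq:LW2} with \eqref{eq:IM1}--\eqref{eq:IM3}, invoking the Linear--Linear step of Theorem~\ref{thm:mainresult} for the last equivalence. One small point you gloss over: the $C^\infty(M)$-linearity of $\psi$ is not quite ``immediate'', since $f\Lambda$ corresponds to the pair $(f\mu,\,f\nu-\dd f\wedge\mu)$ rather than $(f\mu,f\nu)$; the paper records the compensating identity $d^v(f\mu)_W=fd^v\mu_W-(\dd f\wedge\mu)_W$, which is exactly what makes $\psi(f\Lambda)=f\psi(\Lambda)$ hold.
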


\begin{proof}
It is clear from \eqref{eq:muW} and \eqref{eq:dvmu} that the map
$\psi$ is injective. Let us check that any $\Lambda_W\in W^{1,k}(A)$
can be written in the form $- d^v\mu_W + \nu_W$ for
$C^\infty(M)$-linear maps $\mu: \Gamma(A)\rmap \Omega^{k-1}(M)$,
$\nu:\Gamma(A)\rmap \Omega^k(M)$. Let us write
$\Lambda_W=((\Lambda_W)_0,(\Lambda_W)_1)$, and set $\mu= -
(\Lambda_W)_1$. Then $(d^v\mu_W)_1 = - (\Lambda_W)_1$, so the
element $c = d^v\mu_W + \Lambda_W \in W^{1,k}(A)$ is such that
$c_1=0$, which implies that $c=\nu_W$ for a bundle map $\nu:A\rmap
\wedge^kT^*M$. The $C^\infty(M)$-linearity of $\psi$ results from
the following properties: $f\dd\Lambda_\mu = \dd\Lambda_{f\mu} -
\Lambda_{\dd f\wedge \mu}$ and $d^v(f\mu)_W=fd^v\mu_W - (\dd f\wedge
\mu)_W$. Hence $(1)$ is proven.

To prove $(2)$, writing $\Lambda = \dd\Lambda_\mu + \Lambda_\nu$, we
have $\dd\Lambda = \dd\Lambda_\nu$. By definition of $\psi$, it
follows that $\psi(\dd\Lambda)= - d^v\nu_W$. On the other hand,
$-d^v(\psi(\Lambda)) = -d^v(-d^v\mu_W + \nu_W)= - d^v\nu_W$, hence
$(2)$ holds.

For $(3)$, we must consider the condition $d^h\Lambda_W=0$. Written
in terms of its components \eqref{eq:LW0}, \eqref{eq:LW1}, and
\eqref{eq:LW2}, we obtain three equations involving $(\Lambda_W)_0$
and $(\Lambda_W)_1$, which must be shown to agree with conditions
\eqref{eq:IM1}, \eqref{eq:IM2} and \eqref{eq:IM3} in
Thm.~\ref{thm:mainresult}. Using \eqref{eq:muW}, \eqref{eq:dvmu}, we
see that
\begin{equation}\label{eq:LW01}
(\Lambda_W)_0(u) = \dd\mu(u) + \nu(u) ,\qquad (\Lambda_W)_1(u) =
-\mu(u)
\end{equation}
for all $u\in \Gamma(A)$, and it is clear that \eqref{eq:LW2} and
\eqref{eq:LW1} coincide with conditions \eqref{eq:IM1} and
\eqref{eq:IM2}, respectively.

For the degree-0 condition \eqref{eq:LW0}, using \eqref{eq:LW01} and
\eqref{eq:IM2}, we find
$$
\nu([u,v])= \dd i_{\rho(v)}d\mu(u) + \dd i_{\rho(v)}\nu(u)
+\Lie_{\rho(u)}\nu(v) -\Lie_{\rho(v)}\dd\mu(u) -
\Lie_{\rho(v)}\nu(u).
$$
Using Cartan's formula $\Lie_X = i_X\dd + \dd i_X$, one directly
verifies that the last equation agrees with \eqref{eq:IM3}.
\end{proof}

Let $\Grd$ be a source-simply-connected Lie groupoid over $M$, with
Lie algebroid $A\rmap M$. There is a double complex
$\Omega^k(\Grd^{(p)})$ associated to $\Grd$, known as the
\textit{Bott-Shulman complex}, see \cite{Bott}. It is equipped with
a differential $\partial: \Omega^k(\Grd^{(p)})\rmap
\Omega^k(\Grd^{(p+1)})$, as well as the de Rham differential $\dd:
\Omega^k(\Grd^{(p)})\rmap \Omega^{k+1}(\Grd^{(p)})$. The \textit{Van
Est isomorphism} constructed in \cite{AC} relates the cohomologies
of $\Omega^k(\Grd^{(p)})$ and $W^{p,k}(A)$. We will only need a few
results of the theory,  for $p=0,1$.

For $p=0$, $\Omega^k(\Grd^{(0)})=\Omega^k(M)=W^{0,k}(A)$, and
$$
\partial: \Omega^k(M)\rmap \Omega^k(\Grd), \qquad \partial(\eta)=
\tar^*\eta - \sour^*\eta.
$$
For $p=1$, the differential $\partial: \Omega^k(\Grd)\rmap
\Omega^k(\Grd^{(2)})$ is
$$
\partial (\alpha) = pr_1^*\alpha - m^*\alpha + pr_2^*\alpha,
$$
and the Van Est map of \cite[Sec.~4]{AC} restricts to a map
\begin{equation}\label{eq:VE}
\mathbb{V}:
\Omega^k_{\mathrm{mult}}(\Grd)=\ker(\partial|_{\Omega^k(\Grd)})
\rmap \ker(d^h|_{W^{1,k}(A)})\subset W^{1,k}(A),
\end{equation}
given by
\begin{equation}\label{eq:V01}
\mathbb{V}(\alpha)_0(u) = \epsilon^*(\dd i_u\alpha + i_u
\dd\alpha),\qquad \mathbb{V}(\alpha)_1(u) = -\epsilon^*(i_u\alpha)
\end{equation}
where $\alpha \in \Omega^k_{\mathrm{mult}}(\Grd)$, $u\in \Gamma(A)$
(we view $A$ as a subbundle of $T\Grd|_M$) and $\epsilon:M\to \Grd$
is the unit map of $\Grd$. The map $\mathbb{V}$ satisfies
\begin{equation}\label{eq:Vprop}
\mathbb{V} \circ \dd = - d^v \circ \mathbb{V},\;\;\;\;
\mathbb{V}(\partial(\eta))=d^h\eta,
\end{equation}
for $\eta \in \Omega^k(M)$. The general Van Est isomorphism of
\cite{AC} implies that the induced map
\begin{equation}\label{eq:Visom}
\frac{\Omega^k_{\mathrm{mult}}(\Grd)}{\mathrm{Im}(\partial|_{\Omega^k(M)})}{\rmap}
\frac{\ker(d^h|_{W^{1,k}(A)})}{\mathrm{Im}(d^h|_{\Omega^k(M)})}
\end{equation}
is a bijection. In this specific situation, a stronger fact holds.

\begin{proposition}\label{prop:bijec} The map $\mathbb{V}: \Omega^k_{\mathrm{mult}}(\Grd) \rmap
\ker(d^h|_{W^{1,k}(A)})$ is a bijection.
\end{proposition}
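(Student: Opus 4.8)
The plan is to identify the Van Est map $\mathbb{V}$ with the composition of the correspondence of Theorem~\ref{thm:1-1} and the Weil-algebra isomorphism of Proposition~\ref{prop:IMcocicle}; since each of those is a bijection, so is $\mathbb{V}$. Recall from \eqref{eq:liemap} the map $\LF:\Omega^k(\Grd)\rmap\Omega^k(A)$, $\alpha\mapsto\iota_A^*\alpha_T$. By Lemma~\ref{lem:Lie1}, for multiplicative $\alpha$ the form $\LF(\alpha)$ is linear, so $\psi(\LF(\alpha))$ makes sense, where $\psi$ is as in Proposition~\ref{prop:IMcocicle}. The key assertion to prove is that $\psi\circ\LF=\mathbb{V}$ as maps $\Omega^k_{\mathrm{mult}}(\Grd)\rmap W^{1,k}(A)$.

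To establish this assertion, I would fix $\alpha\in\Omega^k_{\mathrm{mult}}(\Grd)$ and let $\mu:A\rmap\wedge^{k-1}T^*M$ and $\nu:A\rmap\wedge^kT^*M$ be the bundle maps associated with $\alpha$ by \eqref{eq:mu}, \eqref{eq:nu}. Unwinding those formulas (using $TM,A\hookrightarrow T\Grd|_M$) gives $\mu(u)=\epsilon^*(i_u\alpha)$ and $\nu(u)=\epsilon^*(i_u\dd\alpha)$ for $u\in\Gamma(A)$, where $u$ is first extended to a vector field on $\Grd$ — the choice being irrelevant once $\epsilon^*$ is applied. By Lemma~\ref{lem:Lie1} we have $\LF(\alpha)=\dd\Lambda_\mu+\Lambda_\nu$, so the definition of $\psi$ together with the component formulas \eqref{eq:LW01} give that $\psi(\LF(\alpha))\in W^{1,k}(A)$ has components $\psi(\LF(\alpha))_0(u)=\dd\mu(u)+\nu(u)$ and $\psi(\LF(\alpha))_1(u)=-\mu(u)$. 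Since the de Rham differential commutes with $\epsilon^*$, one has $\dd\mu(u)=\epsilon^*(\dd i_u\alpha)$, and therefore
$$
\psi(\LF(\alpha))_0(u)=\epsilon^*(\dd i_u\alpha+i_u\dd\alpha),\qquad \psi(\LF(\alpha))_1(u)=-\epsilon^*(i_u\alpha),
$$
which are exactly the components of $\mathbb{V}(\alpha)$ as given in \eqref{eq:V01}. Hence $\psi\circ\LF=\mathbb{V}$; in particular this re-derives that $\mathbb{V}$ takes values in $\ker(d^h|_{W^{1,k}(A)})$.

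Finally, I would combine this with the bijectivity of the two factors. By Theorem~\ref{thm:1-1} (using that $\Grd$ is source-simply-connected), $\alpha\mapsto(\mu,\nu)$ is a bijection $\Omega^k_{\mathrm{mult}}(\Grd)\stackrel{\sim}{\rmap}\Omega^k_{\mathrm{IM}}(A)$; by Lemma~\ref{lem:Lie1} and Proposition~\ref{prop:struc} this is the statement that $\LF$ maps $\Omega^k_{\mathrm{mult}}(\Grd)$ bijectively onto the set of linear forms $\dd\Lambda_\mu+\Lambda_\nu$ with $(\mu,\nu)\in\Omega^k_{\mathrm{IM}}(A)$. By part~(3) of Proposition~\ref{prop:IMcocicle}, $\psi$ sends this set bijectively onto $\ker(d^h|_{W^{1,k}(A)})$. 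Composing, $\mathbb{V}=\psi\circ\LF$ is a bijection onto $\ker(d^h|_{W^{1,k}(A)})$, as claimed. The only step that takes real work is the identity $\psi\circ\LF=\mathbb{V}$ — i.e., checking that the intrinsic formulas \eqref{eq:V01} for the Van Est map match the components \eqref{eq:LW01} of $\psi$ applied to $\LF(\alpha)$ — but this is a matter of tracking signs and the naturality of $\dd$ under the unit pullback, and notably does not require the quotient-level Van Est isomorphism \eqref{eq:Visom}.
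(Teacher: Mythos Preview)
Your proof is correct, but it follows a genuinely different route from the paper's.

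The paper proves Proposition~\ref{prop:bijec} by a direct argument within the Van Est framework of \cite{AC}: it first establishes Lemma~\ref{lem:bijec} (using both the quotient-level Van Est isomorphism \eqref{eq:Visom} and the fact from \cite[Lem.~6.3]{AC} that $\mathbb{V}$ kills no nonzero closed multiplicative form), and then applies the lemma twice to construct the preimage of a given $d^h$-cocycle. Only \emph{after} this independent proof does the paper compute, exactly as you do, that the resulting bijection agrees with the one of Theorem~\ref{thm:1-1} via $\psi$. In other words, the paper treats $\mathbb{V}=\psi\circ\LF$ as a \emph{consequence} that reconciles the two theories, whereas you use it as the \emph{engine} of the proof.

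Your approach is more economical as a proof of the proposition: it avoids \eqref{eq:Visom} and \cite[Lem.~6.3]{AC} entirely, relying only on results already established in Sections~\ref{sec:alg}--\ref{sec:mult} together with the purely algebraic Proposition~\ref{prop:IMcocicle}. The trade-off is one of narrative: the point of Section~\ref{sec:VE} is precisely to exhibit Proposition~\ref{prop:bijec} as an instance of the general Van Est machinery, so that Theorem~\ref{thm:1-1} can be seen as a refinement of \eqref{eq:Visom}. Your argument establishes the statement but sidesteps that connection; it would make the paper's subsequent remark (that the two bijections agree) tautological rather than a genuine comparison.
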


The proof of the proposition uses the following observation (cf.
\cite[Sec.~6]{AC}).

\begin{lemma}\label{lem:bijec}
Let $\sigma \in {W^{1,k}(A)}$, $\omega\in
\Omega^{k+1}_{\mathrm{mult}}(\Grd)$ be such that $d^h\sigma=0$ and
$\mathbb{V}(\omega)=-d^v\sigma$. Then there exists a unique $\beta
\in \Omega^{k}_{\mathrm{mult}}(\Grd)$ such that
$$
\mathbb{V}(\beta)=\sigma, \;\;\; \dd\beta=\omega.
$$
\end{lemma}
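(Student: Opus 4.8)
The plan is to identify the Van Est map $\mathbb{V}$ with the composition $\psi\circ\LF$ of the tangent-lift differentiation $\LF$ of Section~\ref{sec:mult} and the algebraic isomorphism $\psi$ of Proposition~\ref{prop:IMcocicle}, and then to read the lemma off from the bijections already established.

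First I would record the identity $\mathbb{V}(\alpha)=\psi(\LF(\alpha))$, valid for every multiplicative form $\alpha$ in any degree. By Lemma~\ref{lem:Lie1}, $\LF(\alpha)=\dd\Lambda_\mu+\Lambda_\nu$ with $\mu,\nu$ as in \eqref{eq:mu1}--\eqref{eq:nu1}; unwinding those formulas and using the inclusion $TM\subset T\Grd|_M$ gives $\mu(u)=\epsilon^*(i_u\alpha)$ and $\nu(u)=\epsilon^*(i_u\dd\alpha)$ for $u\in\Gamma(A)$, so that (since $\epsilon^*\dd=\dd\epsilon^*$) the expressions \eqref{eq:V01} become
\[\mathbb{V}(\alpha)_0(u)=\dd\mu(u)+\nu(u),\qquad \mathbb{V}(\alpha)_1(u)=-\mu(u),\]
which by \eqref{eq:LW01} are exactly the components of $\psi(\LF(\alpha))$. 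Since $\psi$ is injective (Proposition~\ref{prop:IMcocicle}(1)) and $\alpha\mapsto\LF(\alpha)$ is injective (it realizes, via Proposition~\ref{prop:struc}, the one-to-one correspondence of Theorem~\ref{thm:1-1}), this shows in particular that $\mathbb{V}$ is injective in every degree.

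Next I would construct $\beta$. Since $d^h\sigma=0$, Proposition~\ref{prop:IMcocicle}(3) yields a unique linear $k$-form $\Lambda=\dd\Lambda_\mu+\Lambda_\nu$ with $(\mu,\nu)$ an IM $k$-form and $\psi(\Lambda)=\sigma$. As $\Grd$ is source-simply-connected, Theorem~\ref{thm:1-1} provides a unique $\beta\in\Omega^k_{\mathrm{mult}}(\Grd)$ whose associated IM $k$-form is $(\mu,\nu)$, i.e.\ $\LF(\beta)=\Lambda$; hence $\mathbb{V}(\beta)=\psi(\LF(\beta))=\psi(\Lambda)=\sigma$. To get $\dd\beta=\omega$, I would apply the first relation in \eqref{eq:Vprop}, $\mathbb{V}\circ\dd=-d^v\circ\mathbb{V}$: this gives $\mathbb{V}(\dd\beta)=-d^v\mathbb{V}(\beta)=-d^v\sigma=\mathbb{V}(\omega)$. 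Since $\dd\beta$ and $\omega$ are both multiplicative $(k+1)$-forms ($\dd$ preserving multiplicativity) and $\mathbb{V}$ is injective in degree $k+1$, we conclude $\dd\beta=\omega$. Uniqueness of $\beta$ is immediate from injectivity of $\mathbb{V}$ in degree $k$: any $\beta'$ with $\mathbb{V}(\beta')=\sigma$ equals $\beta$.

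The one genuinely non-formal step is the identity $\mathbb{V}=\psi\circ\LF$, i.e.\ checking that the abstract Van Est map of \cite{AC} is computed by the concrete differentiation of Section~\ref{sec:mult}; granting it, the rest is bookkeeping with Theorem~\ref{thm:1-1} and Proposition~\ref{prop:IMcocicle}. If instead one wants to argue purely inside the Van Est formalism, the alternative is to use surjectivity of \eqref{eq:Visom} and $\mathbb{V}(\partial\eta)=d^h\eta$ to produce $\beta_1$ with $\mathbb{V}(\beta_1)=\sigma$, to note that $\tau:=\dd\beta_1-\omega$ is a multiplicative $(k+1)$-form with $\mathbb{V}(\tau)=0$ (by $\mathbb{V}\circ\dd=-d^v\circ\mathbb{V}$ and $\mathbb{V}(\omega)=-d^v\sigma$), hence $\tau=\partial\zeta$ with $d^h\zeta=\mathbb{V}(\partial\zeta)=0$ by injectivity of \eqref{eq:Visom}, and finally to invoke the ``bottom injectivity'' of the Van Est complex --- a form $\zeta\in\Omega^{k+1}(M)$ annihilated by all $i_{\rho(u)}$ and $\Lie_{\rho(u)}$ satisfies $\sour^*\zeta=\tar^*\zeta$ because $\Grd$ is source-connected --- to deduce $\tau=0$, i.e.\ $\dd\beta_1=\omega$. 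In that variant the bottom-injectivity statement is the main obstacle.
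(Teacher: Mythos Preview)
Your main argument is correct but follows a genuinely different route from the paper's. The paper stays entirely within the Van Est formalism: it imports from \cite[Lem.~6.3]{AC} the fact that a \emph{closed} multiplicative form with vanishing $\mathbb{V}$-image must vanish, first deduces that $\omega$ is closed (since $\mathbb{V}(\dd\omega)=d^v d^v\sigma=0$), then uses the cohomology-level isomorphism \eqref{eq:Visom} to produce $\widetilde{\beta}$ with $\mathbb{V}(\widetilde{\beta})=\sigma+d^h\eta$, corrects to $\beta=\widetilde{\beta}-\partial\eta$, and concludes $\dd\beta=\omega$ because $\dd\beta-\omega$ is closed with vanishing $\mathbb{V}$. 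By contrast, you first establish the identity $\mathbb{V}=\psi\circ\LF$ (which the paper only records \emph{after} using this lemma to prove Proposition~\ref{prop:bijec}) and then invoke Theorem~\ref{thm:1-1} to produce $\beta$ and to obtain injectivity of $\mathbb{V}$ in all degrees. This is logically sound, and arguably cleaner, but it reverses the intended flow of Section~\ref{sec:VE}: the whole point there is to show that the Van Est machinery of \cite{AC} gives an \emph{independent} derivation of Theorem~\ref{thm:1-1}, whereas your argument makes the lemma a corollary of Theorem~\ref{thm:1-1}.

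Your alternative sketch in the final paragraph is closer in spirit to the paper but misses its key simplification. Instead of chasing $\tau=\partial\zeta$ and invoking a ``bottom injectivity'' argument for $\zeta\in\Omega^{k+1}(M)$, the paper observes that $\tau=\dd\beta_1-\omega$ is \emph{closed} (because $\omega$ is closed, as noted above), and then \cite[Lem.~6.3]{AC} applies directly to give $\tau=0$. Your route via $\partial\zeta$ would also require checking that $\tau$ is closed to argue $\partial\zeta$ is closed and hence $\zeta$-exact in the right sense; once you notice $\tau$ is closed, the cited lemma makes the detour unnecessary.
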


\begin{proof}
The key fact to prove the lemma is shown in \cite[Lem.~6.3]{AC}: for
a closed $k$-form $\alpha \in \Omega^k_{\mathrm{mult}}(\Grd)$,
$\mathbb{V}(\alpha)=0$ if and only if $\alpha=0$. As an application,
we see that $\omega$ is necessarily closed, since
$\mathbb{V}(\dd\omega) = d^v(d^v\sigma)=0$.

Since $d^h\sigma=0$, the isomorphism \eqref{eq:Visom} implies that
there exists $\widetilde{\beta} \in \Omega^k_{\mathrm{mult}}(\Grd)$
such that $\mathbb{V}(\widetilde{\beta})= \sigma + d^h\eta$. If
$\beta = \widetilde{\beta}-\partial\eta$, then by \eqref{eq:Vprop}
we have
$$
\mathbb{V}(\beta) =
\mathbb{V}(\widetilde{\beta})-\mathbb{V}(\partial\eta) = \sigma.
$$
To conclude that $\dd\beta=\omega$, note that $\dd\beta-\omega$ is
multiplicative, closed, and $\mathbb{V}(\dd\beta -\omega)=-d^v\sigma
+ d^v\sigma = 0$.
\end{proof}

We can now prove the proposition.

\begin{proof} (of Prop.~\ref{prop:bijec})
Let us fix $\xi \in {W^{1,k}(A)}$, $d^h\xi =0$. Let $\sigma =
-d^v\xi$. Then $d^v\sigma =0$, $d^h\sigma=0$, and
Lemma~\ref{lem:bijec} implies that there exists a unique $\beta\in
\Omega^{k+1}_{\mathrm{mult}}(\Grd)$ such that
\begin{equation}\label{eq:beta}
\mathbb{V}(\beta)=\sigma, \;\;\;\; \dd\beta=0.
\end{equation}
Since $\mathbb{V}(\beta)=-d^v\xi$, and, by assumption, $d^h\xi=0$,
we can apply Lemma~\ref{lem:bijec} to conclude that there exists a
unique $\theta \in \Omega^{k}_{\mathrm{mult}}(\Grd)$ such that
$\mathbb{V}(\theta)=\xi$ and $\dd\theta = \beta$. But notice that
the condition $\dd\theta = \beta$ is automatically satisfied if
$\mathbb{V}(\theta)=\xi$, since $\mathbb{V}(\dd\theta)=\sigma$ and
the conditions in \eqref{eq:beta} determine $\beta$ uniquely.
\end{proof}

Composing the bijection \eqref{eq:VE} with the identification
$\Omega^k_{\mathrm{IM}}(A)\cong \ker(d^h|_{W^{1,k}}(A))$ of $(3)$ in
Prop.~\ref{prop:IMcocicle}, we obtain a bijection
$$
\Omega^k_{\mathrm{mult}}(\Grd) \stackrel{\sim}{\rmap}
\Omega^k_{\mathrm{IM}}(A).
$$
Using \eqref{eq:LW01} and \eqref{eq:V01}, we see that this bijection
is explicitly given by $\alpha \mapsto (\mu,\nu)$, where $\mu, \nu$
are defined as in \eqref{eq:mu}, \eqref{eq:nu}, hence agreeing with
Theorem~\ref{thm:1-1}.

\section{The dual picture: multiplicative multivector
fields}\label{sec:dual}

In this section, we illustrate how the techniques used in the paper
to study infinitesimal versions of multiplicative forms can be
equally applied to multiplicative multivector fields.

We keep the notation introduced in Section \ref{subsec:tancot}. We
focus on the cotangent bundle $c_A: T^*A\to A$ of a vector bundle
$A\to M$, described in local coordinates $(x^j,u^d,p_j,\xi_d)$,
where $(x^j,u^d)$ are relative to a basis of local sections
$\{e_d\}$ of $A$. The local coordinates on $A^*$ relative to the
dual basis $\{ e^d \}$ are denoted by $(x^j,\xi_d)$; recall from
\eqref{eq:r} that we have a vector-bundle structure $r: T^*A\to
A^*$, $(x^j,u^d,p_j,\xi_d)\mapsto (x^j,\xi_d)$. As in Section
\ref{subsec:struc}, we also consider the $k$-fold direct sum
$\oplus_A^kT^*A$, described by local coordinates
$(x^j,u^d,p_j^1,\ldots,p_j^k,\xi_d^1,\ldots,\xi_d^k)$, as a vector
bundle over $\oplus^k A^*$, with projection map
$(x^j,u^d,p_j^1,\ldots,p_j^k,\xi_d^1,\ldots,\xi_d^k)\mapsto
(x^j,\xi_d^1,\ldots,\xi_d^k)$.

As in Section~\ref{subsec:corelinear}, we will need special sections
of the bundle $\oplus^k_A T^*A\to \oplus^k A^*$. For the bundle
$T^*A\rmap A^*$, we consider local sections
\begin{equation}\label{eq:secT*A}
\dd\widehat{x}^i(x^j,\xi_d)=(x^j,0,\delta^i_j,\xi_d),\;\;\;
e_a^L(x^j,\xi_d)=(x^j,\delta_a^d,0,\xi_d),
\end{equation}
which are core and linear sections, respectively; these sections
generate the module of local sections of $T^*A\rmap A^*$, and the
projection $T^*A\rmap A$ maps core sections to the zero section of
$A\rmap M$ and linear sections $e_a^L$ to the section $e_a$. More
generally, local sections of $\oplus^k_A T^*A\to \oplus^k A^*$ are
generated by sections of types
\begin{align}
&\dd\widehat{x}^{i,n}(\xi^1\oplus\ldots\oplus\xi^k)= 0(\xi^1)\oplus
\ldots 0(\xi^{n-1})\oplus \dd\widehat{x}^i(\xi^n) \oplus
0(\xi^{n+1}) \ldots \oplus 0(\xi^k),
\label{eq:secT*A1}\\
& (e_a^{L})^k(\xi^1\oplus\ldots\oplus\xi^k) = e_a^L(\xi^1)\oplus
\ldots \oplus e_a^L(\xi^k),\label{eq:secT*A2}
\end{align}
where $\xi^1\oplus\ldots\oplus\xi^k\in \oplus^kA^*$ and $0:A^*\to
T^*A$, $0(x^j,\xi_d)=(x^j,0,0,\xi_d)$, is the zero section. For each
$k$, we will use these sections to express the natural Lie algebroid
structure on $\oplus^k_A T^*A\to \oplus^k A^*$, similarly to Section
\ref{subsec:Liealg}.

Using the notation in \eqref{eq:liestruc}, the defining relations
for the cotangent Lie algebroid structure on $T^*A\to A^*$ are
\begin{align}
&[\dd\widehat{x}^i,\dd\widehat{x}^j]_{\st{T^*A}}=0,\label{eq:T*A0} \\
& [e_a^L,\dd\widehat{x}^j]_{\st{T^*A}}= \frac{\partial
{\rho}_a^j}{\partial x^i}\dd\widehat{x}^i, \;\;
[e_a^L,e_b^L]_{\st{T^*A}}=-\frac{\partial{C}^{c}_{ab}}{\partial x^i}
\xi_c \dd\widehat{x}^i +
C_{ab}^c e_c^L,\label{eq:T*A1}\\
&
\rho_{\st{T^*A}}(\dd\widehat{x}^i)=\rho^{i}_{d}\frac{\partial}{\partial
\xi_d},\;\;\;
\rho_{\st{T^*A}}(e_a^L)=\rho^{j}_{a}\frac{\partial}{\partial {x}^j}
+ C_{ab}^c \xi_c \frac{\partial}{\partial \xi_b}.\label{eq:T*A2}
\end{align}
This Lie algebroid structure is extended to direct sums $\oplus^k_A
T^*A\to \oplus^k A^*$ in total analogy to what was done for the
tangent Lie algebroid in Section \ref{subsec:Liealg}; we adopt the
simplified notation $\rho_k = \rho_{\oplus^k_A T^*A}$ and
$[\cdot,\cdot]_k=[\cdot,\cdot]_{\oplus^k_A T^*A}$ for the resulting
anchor and bracket\footnote{Since tangent Lie algebroids are not
used in this section, this notation should not cause any confusion
with the one in Section \ref{subsec:Liealg}.}. Explicitly, the
anchor is given by
\begin{equation}\label{eq:anc*}
\rho_k(\dd\widehat{x}^{i,n})=\rho^i_d\frac{\partial}{\partial
\xi^n_d}, \qquad  \rho_k((e_a^L)^k)=
\rho_a^j\frac{\partial}{\partial x^j} +
C_{ab}^c\xi_c^n\frac{\partial}{\partial \xi_b^n},
\end{equation}
whereas for the bracket we have
\begin{align}
& [\dd\widehat{x}^{i,n},\dd\widehat{x}^{j,m}]_k=0,\;\; \;
[(e_a^L)^k,\dd\widehat{x}^{j,m}]_k=\frac{\partial\rho_a^j}{\partial
x^i} \dd\widehat{x}^{i,m} \label{eq:br*1}\\
& [(e_a^L)^k,(e_b^L)^k]_k = C_{ab}^d (e_d^L)^k - \frac{\partial
C_{ab}^c}{\partial x^i}\xi_c^n \dd\widehat{x}^{i,n}.\label{eq:br*2}
\end{align}

\subsection{Linear multivector fields and derivations}

Let $\pi \in \cX^k(A)=\Gamma(\wedge^k TA)$ be a $k$-vector field on
the total space of a vector bundle $q_A: A\to M$. Let us consider
the function (cf. \eqref{eq:lbar})
$$\overline{\pi}: \oplus^k_AT^*A \to \mathbb{R},\;\;\;
\overline{\pi}(\Upsilon_1,\ldots,\Upsilon_k)=i_{\Upsilon_k}\ldots
i_{\Upsilon_1}\pi.
$$
We say that $\pi\in \cX^k(A)$ is \textbf{linear} if $\overline{\pi}$
defines a vector-bundle map
\begin{equation}\label{eq:linepi}
\xymatrix{\oplus_A^{k} T^*A  \ar[r]^{\;\;\;\;\;\overline{\pi}} \ar[d] & \mathbb{R} \ar[d] \\
 \oplus^{k}A^* \ar[r]_{} &  \{*\}, }
\end{equation}
similarly to \eqref{eq:Lvbmap}. One can directly verify that the
notion of linear multivector field agrees with the one considered in
\cite[Section~3.2]{ILX}. The space of linear $k$-vector fields is
denoted by $\cX^k_{\mathrm{lin}}(A)$. As in Lemma~\ref{lem:linear}
(cf. \eqref{eq:linearlocal}), $\pi$ is expressed in local
coordinates $(x^j,u^d)$ of $A$ as
\begin{align}
\pi = & \frac{1}{k!}\pi^{b_1\ldots b_k}_d(x) u^d \frac{\partial}{\partial u^{b_1}}\wedge \ldots\wedge  \frac{\partial}{\partial u^{b_k}}
+ \label{eq:pilinear}\\
& \frac{1}{(k-1)!}\pi^{b_1\ldots b_{k-1} j}(x)
\frac{\partial}{\partial u^{b_1}}\wedge \ldots\wedge
\frac{\partial}{\partial u^{b_{k-1}}}\wedge \frac{\partial}{\partial
x^j}. \nonumber
\end{align}

We have the following analog of Proposition~\ref{prop:struc} for
linear multivector fields, proven in \cite[Prop.~ 3.7]{ILX}: there
is a 1-1 correspondence between elements in
$\cX^k_{\mathrm{lin}}(M)$ and pairs $(\delta_0,\delta_1)$, where
$\delta_0:C^\infty(M)\to \Gamma(\wedge^{k-1}A)$ and
$\delta_1:\Gamma(A)\to \Gamma(\wedge^kA)$ are linear maps satisfying
\begin{equation}\label{eq:deriv01}
\delta_0(fg)=g \delta_0 f + f \delta_0 g,\;\;\;\;
\delta_1(fu)=(\delta_0 f)\wedge u + f \delta_1 u,
\end{equation}
for all $f,g\in C^\infty(M)$ and $u\in \Gamma(A)$. Equivalently, one
may view such pairs $(\delta_0,\delta_1)$ as restrictions of linear
maps $\delta: \Gamma(\wedge^\bullet A)\to
\Gamma(\wedge^{\bullet+k-1}A)$ satisfying the property
\begin{equation}\label{eq:gers0}
\delta(u\wedge v) = (\delta u)\wedge v + (-1)^{p(k-1)}u\wedge
(\delta v)
\end{equation}
for $u\in \Gamma(\wedge^p A)$ and $v\in \Gamma(\wedge^q A)$; i.e.,
$\delta$ is a degree-$(k-1)$ derivation of the exterior algebra
$\Gamma(\wedge^\bullet A)$. For this reason, we denote both maps
$\delta_0$ and $\delta_1$ by $\delta$. The explicit correspondence
between $\pi$ and $\delta$ is given by\footnote{To see that
\eqref{eq:pidelta1} and \eqref{eq:pidelta2} determine the linear
$k$-vector $\pi$, note that fibres of $T^*A \to A$ are generated by
elements of types $\dd l_\xi$ and $\dd q_A^*f$, and by linearity
$i_{\dd q_A^*f_2}i_{\dd q_A^*f_1}\pi=0$. \label{foot:linear}} (see
\cite[Section~3.2]{ILX})
\begin{align}
&\pi(\dd l_{\xi^1},\dots,\dd l_{\xi^{k-1}},\dd q_A^* f) =    q_A^*
\SP{\delta
f,\xi^1\wedge\ldots\wedge \xi^{k-1}},\label{eq:pidelta1} \\
& \pi(\dd l_{\xi^1},\ldots,\dd l_{\xi^k})(u)=
\sum_{i=1}^k(-1)^{i+k}\pi(\dd l_{\xi^1},\ldots,\widehat{\dd
l_{\xi^{i}}},\ldots,\dd l_{\xi^k},\dd q_A^*\SP{\xi^i,u})
\label{eq:pidelta2}\\
& \qquad \qquad \qquad \qquad \;\;\;\;\; -\SP{\delta u,
\xi^1\wedge\ldots\wedge \xi^k},\nonumber
\end{align}
where $f\in C^\infty(M)$, $u\in \Gamma(A)$, $\xi^1,\ldots,\xi^k \in
\Gamma(A^*)$, and $l_{\xi^i}\in C^\infty(A)$ is the linear function
$l_{\xi^i}(u)=\SP{\xi^i,u}$. In coordinates, we have
\begin{equation}\label{eq:coord}
\delta x^i = \frac{1}{(k-1)!}\pi^{b_1\ldots b_{k-1}i}(x)
e_{b_1}\wedge\ldots\wedge e_{b_{k-1}},\;\;\; \delta e_a =
-\frac{1}{k!}\pi_a^{b_1\ldots b_k}(x) e_{b_1}\wedge \ldots\wedge
e_{b_k},
\end{equation}
where $\{e_d\}$ is a basis of local sections of $A$.

Let $A\to M$ be a Lie algebroid. The Lie bracket $[\cdot,\cdot]$ on
$\Gamma(A)$ has a natural extension (still denoted by
$[\cdot,\cdot]$) to the exterior algebra $\Gamma(\wedge^\bullet A)$,
$$
[\cdot,\cdot]: \Gamma(\wedge^p A)\times \Gamma(\wedge^q A)\to
\Gamma(\wedge^{p+q-1}A),
$$
making it into a \textit{Gerstenhaber
algebra} (see e.g. \cite{CW}): for $u\in \Gamma(\wedge^pA)$, $v\in
\Gamma(\wedge^qA)$, and $w\in \Gamma(\wedge^rA)$, we have
\begin{align}
&[u,v]=-(-1)^{(p-1)(q-1)}[v,u], \label{eq:gers1}\\
&[u, v\wedge w] = [u,v]\wedge w + (-1)^{(p-1)q}v\wedge[u,w].
\label{eq:gers2}
\end{align}
The next result is the analog of Theorem~\ref{thm:mainresult} for
linear multivector fields.

\begin{theorem}\label{thm:maindual}
Let $\pi \in \cX^k_{\mathrm{lin}}(A)$ be a linear $k$-vector field
on a Lie algebroid $A$, and let $\delta:\Gamma(\wedge^\bullet A)\to
\Gamma(\wedge^{\bullet + k-1}A)$ be the associated derivation (as in
\eqref{eq:pidelta1} and \eqref{eq:pidelta2}). Then the map
$\overline{\pi}$ \eqref{eq:linepi} is a Lie algebroid morphism if
and only if
\begin{equation}\label{eq:gers}
\delta [u,v]=[\delta u, v] + (-1)^{(p-1)(k-1)}[u,\delta v],
\end{equation}
for all $u\in
\Gamma(\wedge^p A)$, $v\in \Gamma(\wedge^q A)$ (i.e.,  $\delta$ is a
$(k-1)$-derivation of the Gerstenhaber bracket).
\end{theorem}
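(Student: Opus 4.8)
The plan is to follow the proof of Theorem~\ref{thm:mainresult} step by step, now on the cotangent side. Since $\mathbb{R}$ is the trivial Lie algebroid over a point, the map $\overline{\pi}$ in \eqref{eq:linepi} is a Lie algebroid morphism if and only if
\[
\overline{\pi}([\Upsilon,\Psi]_k)=\Lie_{\rho_k(\Upsilon)}\overline{\pi}(\Psi)-\Lie_{\rho_k(\Psi)}\overline{\pi}(\Upsilon)
\]
for all sections $\Upsilon,\Psi$ of $\oplus^k_A T^*A\rmap \oplus^k A^*$; by multilinearity and the Leibniz rule it suffices to verify this when $\Upsilon,\Psi$ are among the generating core sections $\dd\widehat{x}^{i,n}$ \eqref{eq:secT*A1} and linear sections $(e_a^L)^k$ \eqref{eq:secT*A2}, yielding the three cases core-core, core-linear and linear-linear, to be treated with the anchor formulas \eqref{eq:anc*} and the bracket formulas \eqref{eq:br*1}, \eqref{eq:br*2}. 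Under the correspondence $\pi\leftrightarrow\delta$, these generators play the roles of the degree-$0$ elements $x^i$ and the degree-$1$ elements $e_a$ of the Gerstenhaber algebra $\Gamma(\wedge^\bullet A)$.

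First I would record the analogue of the lemma preceding the proof of Theorem~\ref{thm:mainresult}: starting from the local form \eqref{eq:pilinear} of $\pi$ and the coordinate description \eqref{eq:coord} of $\delta$, a direct computation shows that $\overline{\pi}$ on a linear section $(e_a^L)^k$ is, up to sign, the function $\SP{\delta e_a,\xi^1\wedge\ldots\wedge\xi^k}$ on $\oplus^k A^*$, while $\overline{\pi}$ on a core section $\dd\widehat{x}^{i,n}$ is, up to sign, the pairing of $\delta x^i$ with the $\xi^l$ for $l\neq n$ --- this is just the correspondence \eqref{eq:pidelta1}--\eqref{eq:pidelta2} evaluated on generating sections. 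As in the form case, an auxiliary computation analogous to Lemma~\ref{lem:trick} is then needed to rewrite the Lie derivatives along the vertical vector fields appearing in the anchor \eqref{eq:anc*} in terms of the anchor and bracket of $A$, so that the Gerstenhaber bracket on $\Gamma(\wedge^\bullet A)$ enters.

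Feeding these formulas into the three cases and using the values of the Gerstenhaber bracket on functions and sections (in particular $[f,u]=\rho(u)(f)$ for $u\in\Gamma(A)$) together with the chain rule $\delta(\rho_a^j)=\frac{\partial\rho_a^j}{\partial x^i}\,\delta x^i$, one finds: the core-core case (where $[\dd\widehat{x}^{i,n},\dd\widehat{x}^{j,m}]_k=0$ by \eqref{eq:br*1}) reduces to \eqref{eq:gers} for the pair of functions $(x^i,x^j)$; the core-linear case, via \eqref{eq:br*1}, reduces to \eqref{eq:gers} for a pair $(x^i,e_a)$; and the linear-linear case, via \eqref{eq:br*2}, reduces to \eqref{eq:gers} for a pair $(e_a,e_b)$. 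Hence $\overline{\pi}$ is a Lie algebroid morphism if and only if \eqref{eq:gers} holds whenever $u,v$ range over the coordinate functions $x^i$ and a local frame $\{e_a\}$. To upgrade this to \eqref{eq:gers} for arbitrary $u\in\Gamma(\wedge^p A)$, $v\in\Gamma(\wedge^q A)$, let $D(u,v)$ denote the difference of the two sides of \eqref{eq:gers}; the derivation property \eqref{eq:gers0} of $\delta$ and the Gerstenhaber identities \eqref{eq:gers1}, \eqref{eq:gers2} imply that $D(u,v\wedge w)=D(u,v)\wedge w\pm v\wedge D(u,w)$, and symmetrically in the first argument, so that $D$ is a biderivation; since $D(u,\cdot)$ then also vanishes on constants, vanishing of $D$ on coordinate functions and on a local frame forces $D\equiv 0$.

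I expect the only real difficulty to be bookkeeping: pinning down the signs in the coordinate formulas for $\overline{\pi}$ on core and linear sections, in the reduction of each of the three cases to the appropriate instance of \eqref{eq:gers}, and in the biderivation identity $D(u,v\wedge w)=D(u,v)\wedge w\pm v\wedge D(u,w)$. Each step is routine, but the grading-induced signs in the Gerstenhaber setting --- as opposed to the ungraded contractions $I^U_{m,r}$ of the form case --- make slips easy.
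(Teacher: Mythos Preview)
Your proposal is correct and follows essentially the same route as the paper: reduce the Lie-algebroid-morphism condition to the three cases on core and linear generators, identify each case with the Gerstenhaber-derivation identity \eqref{eq:gers} for the pairs $(x^i,x^j)$, $(x^i,e_a)$, $(e_a,e_b)$, and then conclude for arbitrary $u,v$ by the biderivation argument. The paper organizes the auxiliary computations a bit differently---rather than invoking an analogue of Lemma~\ref{lem:trick}, it packages things via identities \eqref{eq:Id1}--\eqref{eq:Id4} that express $\overline{\pi}$ on generators and the relevant Lie derivatives directly as pairings with $\delta x^j$, $\delta e_a$, $[x^i,\Phi_F]$, $[e_a,\Phi_F]$---but the content and the sign bookkeeping you anticipate are the same.
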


To draw a clear parallel with Theorem~\ref{thm:mainresult}, we
denote by $\cX^{k}_{\mathrm{IM}}(A)$ the space of degree $(k-1)$
derivations $\delta:\Gamma(\wedge^\bullet A)\to
\Gamma(\wedge^{\bullet + k-1}A)$ of the Gerstenhaber structure
(i.e., \eqref{eq:gers0} and \eqref{eq:gers} hold), in analogy with
IM $k$-forms.

\begin{proof}
We work locally, so the condition that $\overline{\pi}$ is a Lie
algebroid morphism is
\begin{equation}\label{eq:liealgcond2}
\overline{\pi}([\Upsilon_1,\Upsilon_2]_k)=\Lie_{\rho_k(\Upsilon_1)}\overline{\pi}(\Upsilon_2)
- \Lie_{\rho_k(\Upsilon_2)}\overline{\pi}(\Upsilon_1),
\end{equation}
where $\Upsilon_1, \Upsilon_2$ are local sections of
$\oplus^k_AT^*A\to \oplus^kA^*$ of types \eqref{eq:secT*A1} or
\eqref{eq:secT*A2} (cf. \eqref{eq:liealgcond}); hence, just as in
the proof of Theorem~\ref{thm:mainresult}, there are 3 cases to be
analyzed. The assertion of Theorem~\ref{thm:maindual} is a direct
consequence of the following claims:

\begin{itemize}
\item[(c1)] Let $\Upsilon_1 = \dd\widehat{x}^{i,l}$ and $\Upsilon_2 =
\dd\widehat{x}^{j,m}$. If $l=m$, then \eqref{eq:liealgcond2} is
automatically satisfied; if $l\neq m$, then \eqref{eq:liealgcond2}
is equivalent to
\begin{equation}\label{eq:r1}
\delta [x^i,x^j] = [\delta x^i,x^j] + (-1)^{k-1}[x^i,\delta x^j] =0.
\end{equation}

\item[(c2)] Let $\Upsilon_1 = \dd\widehat{x}^{i,l}$ and $\Upsilon_2 =
(e_b^L)^k$. Then \eqref{eq:liealgcond2} is equivalent to
\begin{equation}\label{eq:r2}
\delta [x^i,e_b] = [\delta x^i,e_b] + (-1)^{k-1}[x^i,\delta e_b].
\end{equation}

\item[(c3)] Let $\Upsilon_1=(e_a^L)^k$ and $\Upsilon_2 =
(e_b^L)^k$. Then \eqref{eq:liealgcond2} is equivalent to
\begin{equation}\label{eq:r3}
\delta [e_a,e_b] = [\delta e_a,e_b] + [e_a,\delta e_b].
\end{equation}

\end{itemize}

In order to prove claims (c1), (c2) and (c3), we need some general
observations. For any function $F: \oplus^k A^*\to \mathbb{R}$ which
is $k$-linear over $C^\infty(M)$ and skew symmetric, let $\Phi_F\in
\Gamma(\wedge^kA)$ be the unique element such that
$$
F(\xi^1,\ldots,\xi^k) = \SP{\Phi_F, \xi^1\wedge \ldots\wedge \xi^k}.
$$
E.g., for $F(\xi^1,\ldots,\xi^k)=F^{b_1\ldots b_k}\xi^1_{b_1}\ldots
\xi^k_{b_k}$ with $F^{b_1\ldots b_k}$ totally antisymetric in its indices, we have $\Phi_F = \frac{1}{k!} F^{b_1\ldots
b_k} e_{b_1}\wedge \ldots \wedge e_{b_k}$. We will
consider the cases where $F = \overline{\pi}(\dd\widehat{x}^{j,m})$
and $F= \overline{\pi}((e_a^L)^k)$. Using the local expressions
\eqref{eq:secT*A1}, \eqref{eq:secT*A2} as well as
\eqref{eq:pilinear} and \eqref{eq:coord}, one may directly verify
the following identities:
\begin{align}
& \overline{\pi}(\dd\widehat{x}^{j,m}(\xi^1,\ldots,\xi^k)) =
(-1)^{k-m}\SP{\delta x^j, \xi^1\wedge \ldots \wedge
\widehat{\xi^{m}}\wedge \ldots \wedge \xi^k}, \label{eq:Id1}
\\
& \overline{\pi}((e_a^L)^k(\xi^1,\ldots,\xi^k)) = -\SP{\delta e_a,
\xi^1\wedge \ldots \wedge \xi^k},\label{eq:Id2}
\end{align}
where the notation $\xi^1\wedge \ldots \wedge
\widehat{\xi^{m}}\wedge \ldots \wedge \xi^k$ means that $\xi^m$ is
omitted.

Let us now consider $F(\xi^1,\ldots,\xi^k)=F^{b_1\ldots
b_k}\xi^1_{b_1}\ldots \xi^k_{b_k}$ and the vector fields
$\rho_k(\dd\widehat{x}^{i,l})$ and $\rho_k((e_a^L)^k)$ on $\oplus^k
A^*$, see \eqref{eq:anc*}. Then a direct computation shows the
following identities:
\begin{align}
& \Lie_{\rho_k(\dd\widehat{x}^{i,l})} (F (\xi^1,\ldots,\xi^k)) =
(-1)^l\SP{[x^i,\Phi_F], \xi^1\wedge \ldots \wedge
\widehat{\xi^{l}}\wedge \ldots \wedge \xi^k},\label{eq:Id3}
\\
&  \Lie_{\rho_k((e_a^L)^k)}(F (\xi^1,\ldots,\xi^k)) =
\SP{[e_a,\Phi_F], \xi^1\wedge \ldots \wedge \ldots \wedge \xi^k}.
\label{eq:Id4}
\end{align}
From \eqref{eq:Id1} and \eqref{eq:Id3}, we directly see, assuming
that $l< m$, that
\begin{align*}
&\Lie_{\rho_k(\dd\widehat{x}^{i,l})}
(\overline{\pi}(\dd\widehat{x}^{j,m})(\xi^1,\ldots,\xi^k)) =
\\&\qquad \qquad \qquad
(-1)^{l+k-m}\SP{[x^i,\delta x^j],
\xi^1\wedge\ldots\wedge\widehat{\xi^l}\wedge\ldots\wedge\widehat{\xi^m}\wedge\ldots,\xi^k},
\\
&\Lie_{\rho_k(\dd\widehat{x}^{j,m})}
(\overline{\pi}(\dd\widehat{x}^{i,l})(\xi^1,\ldots,\xi^k)) =
\\&\qquad \qquad  \qquad
(-1)^{m-1+k-l}\SP{[x^j,\delta x^i],
\xi^1\wedge\ldots\wedge\widehat{\xi^l}\wedge\ldots\wedge\widehat{\xi^m}\wedge\ldots\wedge\xi^k}.
\end{align*}
Combining these two equations with \eqref{eq:gers1}, we conclude
that claim (c1) holds.

To prove the other two claims, note first that the derivation
property for functions in \eqref{eq:deriv01} implies that
\begin{equation}\label{eq:deltaf}
\delta f = \frac{\partial f}{\partial x^j}\delta x^j,\;\;\;\; f\in
C^\infty(M).
\end{equation}
As a result, since $[e_b,x^i]=\Lie_{\rho(e_b)}x^i = \rho_b^i$, we
have that
\begin{equation}\label{eq:r21}
\delta [x^i,e_b] = -\delta [e_b,x^i] = - \frac{\partial
\rho_b^i}{\partial x^j}\delta x^j.
\end{equation}
Using the second formula in \eqref{eq:br*1} together with
\eqref{eq:Id1} and \eqref{eq:r21}, we obtain
\begin{align}\label{eq:r22}
\overline{\pi}([\dd\widehat{x}^{i,l},(e^L_b)^k]_k(\xi^1,\ldots,\xi^k))
&=
 -(-1)^{k-l}
\SP{\frac{\partial \rho_b^i}{\partial x^j}\delta x^j,\xi^1\wedge\ldots\wedge \widehat{\xi^l}\wedge\ldots\wedge \xi^k}\\
 & = (-1)^{k-l}\SP{\delta [x^i,e_b], \xi^1\wedge\ldots\wedge \widehat{\xi^l}\wedge\ldots\wedge \xi^k}.           \nonumber
\end{align}
Combining \eqref{eq:Id1} and \eqref{eq:Id4}, as well as
\eqref{eq:Id2} and \eqref{eq:Id3}, we immediately get
\begin{align}
&
\Lie_{\rho_k((e_b^L)^k)}(\overline{\pi}(\dd\widehat{x}^{i,l}(\xi^1,\ldots,\xi^k)))
=
(-1)^{k-l}\SP{[e_b,\delta x^i],\xi^1\wedge\ldots\wedge \widehat{\xi^l}\wedge\ldots\wedge \xi^k},  \label{eq:r23} \\
&
\Lie_{\rho_k(\dd\widehat{x}^{i,l})}(\overline{\pi}((e_b^L)^k(\xi^1,\ldots,\xi^k)))
 = -(-1)^l\SP{[x^i,\delta e_b], \xi^1\wedge\ldots\wedge \widehat{\xi^l}\wedge\ldots\wedge
 \xi^k}.\label{eq:r24}
\end{align}
Now claim (c2) is a direct consequence of \eqref{eq:r22},
\eqref{eq:r23} and \eqref{eq:r24}.

Finally, to prove (c3), we observe a few facts. From
\eqref{eq:deltaf}, we see that
\begin{equation}\label{eq:r31}
\delta[e_a,e_b] = \delta(C_{ab}^c e_c) = \frac{\partial
C_{ab}^c}{\partial x^j}\delta x^j \wedge e_c + C_{ab}^c\delta e_c.
\end{equation}
The usual formula for the wedge product gives us the identity
$$
\frac{\partial C_{ab}^c}{\partial x^j}\SP{\delta x^j \wedge e_c,
\xi^1\wedge\ldots\wedge \xi^k} = \sum_{n=1}^k
(-1)^{k-n}\frac{\partial C_{ab}^c}{\partial x^j}\xi^n_c \SP{\delta
x^j,\xi^1\wedge\ldots\wedge \widehat{\xi
^n}\wedge\ldots\wedge\xi^k};
$$
using it, we immediately obtain from \eqref{eq:br*2}, \eqref{eq:Id1}
and \eqref{eq:Id2} that
\begin{equation}\label{eq:r32}
\overline{\pi}([(e_a^L)^k,(e_b^L)^k]_k(\xi^1,\ldots,\xi^k))=-\SP{\delta[e_a,e_b],\xi^1\wedge\ldots\wedge\xi^k}.
\end{equation}
On the other hand, from \eqref{eq:Id2} and \eqref{eq:Id4} we have
that
\begin{equation}\label{eq:r33}
\Lie_{\rho_k((e_a^L)^k)}(\overline{\pi}((e_b^L)^k(\xi^1,\ldots,\xi^k)))=-\SP{[e_a,\delta
e_b],\xi^1\wedge\ldots\wedge\xi^k}.
\end{equation}
Using \eqref{eq:r32} and \eqref{eq:r33}, we can immediately verify
that claim (c3) holds.
\end{proof}

\subsection{Infinitesimal description of multiplicative multivector fields}

We now discuss the analogs of the results in Section \ref{sec:mult}
for multiplicative multivector fields.

Let $\Grd$ be a Lie groupoid over $M$. Its cotangent bundle
$T^*\Grd$ has a natural Lie groupoid structure over $A^*$, known as
the \textbf{cotangent groupoid} of $\Grd$, see \cite{CDW} and
\cite{Mac-book} for a full description. For us, it will suffice to
recall that the unit map $\widetilde{\epsilon}:A^*\to T^*\Grd|_M$
identifies $A^*$ with the annihilator of $TM\subset T\Grd$, and that
the source map $\widetilde{\sour}: T^*\Grd \to A^*$ is defined by
\begin{equation}\label{eq:sourcet}
\SP{\widetilde{\sour}(\alpha_g),u} = \SP{\alpha_g,Tl_g(u-T\tar(u))},
\qquad  \alpha_g\in T_g^*\Grd, \; u\in A_{\sour(g)},
\end{equation}
where $l_g$ denotes left translation in $\Grd$. Note that
$\widetilde{\sour}$ is a vector-bundle map covering $\sour :\Grd\to
M$; using coordinates $(z^l)$ on $\Grd$, it has the form
\begin{equation}\label{eq:sexp}
\widetilde{\sour}(z^l,\alpha_l)= (\sour(z)^j,C^l_d(z)\alpha_l) \in
A^*|_{\sour(z)}.
\end{equation}
We will not need the explicit expression for $C^l_d(z)$, just to
note that $\widetilde{\sour}(\dd\tar^*f)=0$ for all $f\in
C^\infty(M)$ (by \eqref{eq:sourcet}), which implies that
\begin{equation}\label{eq:Cprop}
C^l_d(z)\frac{\partial (\tar^*f)}{\partial z^l}=0, \;\;\; \forall
f\in C^\infty(M).
\end{equation}

Similarly to what happens for the tangent groupoid, the cotangent
groupoid structure extends to direct sums $\oplus^k_\Grd T^*\Grd$
over $\oplus^k A^*$. A multivector field $\Pi \in \cX^k(\Grd)$ is
called \textbf{multiplicative} if the associated map
\begin{equation}\label{eq:Pi}
\overline{\Pi}: \oplus^k_\Grd T^*\Grd \to \mathbb{R},\;\;\;
\overline{\Pi}(\zeta_1,\ldots,\zeta_k)= i_{\zeta_k}\ldots
i_{\zeta_1} \Pi
\end{equation}
is a groupoid morphism (cf. Lemma~\ref{lem:multip}). We denote the
space of multiplicative $k$-vector fields on $\Grd$ by
$\cX^k_{\mathrm{mult}}(\Grd)$.

\begin{remark}\label{rem:sharp1}
We may equivalently consider the map
\begin{equation}\label{eq:sharp}
\Pi^\sharp: \oplus^{k-1}_\Grd T^* \Grd \to T\Grd,\;\;
{\Pi^\sharp}(\zeta_1,\ldots,\zeta_{k-1})= i_{\zeta_{k-1}}\ldots
i_{\zeta_1} \Pi,
\end{equation}
and verify that $\Pi$ is multiplicative if and only if $\Pi^\sharp$
is a groupoid morphism.
\end{remark}

Let us recall, see e.g. \cite{Mac-Xu}, the identification of Lie
algebroids
\begin{equation}\label{eq:thetaG}
\theta_\Grd : A(T^*\Grd) \stackrel{\sim}{\longrightarrow}
T^*(A\Grd),
\end{equation}
which extends to an identification $\theta_\Grd^k: A(\oplus^k_\Grd
T^*\Grd)=\prod_{\LF(c_\Grd)}A(T^*\Grd) \to \oplus_A^k T^*A$
($c_\Grd: T^*\Grd\to \Grd$ is a groupoid morphism). Given $\Pi \in
\cX^k_{\mathrm{mult}}(\Grd)$, we consider the infinitesimal map
$\LF(\overline{\Pi}):A(\oplus^k_\Grd T^*\Grd)\to \mathbb{R}$ (see
\eqref{eq:liemorphism}), as well as the composition
\begin{equation}\label{eq:liePi}
\LF(\overline{\Pi}) \circ (\theta_\Grd^k)^{-1}: \oplus_A^k T^*A \to
\mathbb{R}.
\end{equation}
The exact same arguments as in Lemma~\ref{lem:int} directly show
that there is a unique $k$-vector field $\LF(\Pi) \in \cX^k(A)$
satisfying
\begin{equation}\label{eq:liePi2}
\overline{\LF(\Pi)} = \LF(\overline{\Pi}) \circ
(\theta_\Grd^k)^{-1};
\end{equation}
moreover, the map
$$
\cX^k_{\mathrm{mult}}(\Grd) \longrightarrow \cX^k(A),\;\;\; \Pi
\mapsto \LF(\Pi),
$$
is a bijection onto the subspace of $k$-vector fields $\pi \in
\cX^k_{\mathrm{lin}}(A)$ for which $\overline{\pi}:\oplus_A^k T^*A
\to \mathbb{R}$ is a morphism of Lie algebroids. An immediate
consequence of Theorem~\ref{thm:maindual} is
\begin{corollary}\label{cor:1-1}
There is a bijective correspondence
\begin{equation}\label{eq:bijder}
\cX_{\mathrm{mult}}^k(\Grd)\longrightarrow
\cX^k_{\mathrm{IM}}(A),\;\;\; \Pi \mapsto \delta,
\end{equation}
where $\delta$ is the derivation associated with $\pi = \LF(\Pi) \in
\cX^k_{\mathrm{lin}}(A)$ (via \eqref{eq:pidelta1} and
\eqref{eq:pidelta2}).
\end{corollary}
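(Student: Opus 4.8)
The plan is to obtain the correspondence \eqref{eq:bijder} by composing three bijections, two of which are already at our disposal and the third being the bijection recorded in the paragraph immediately preceding the statement. First, I would recall that $\Pi\mapsto\LF(\Pi)$ is a bijection from $\cX^k_{\mathrm{mult}}(\Grd)$ onto the set of $\pi\in\cX^k_{\mathrm{lin}}(A)$ for which $\overline{\pi}\colon\oplus^k_A T^*A\rmap\mathbb{R}$ is a morphism of Lie algebroids; this is established exactly as in Lemma~\ref{lem:int}, now using the identification $\theta_\Grd^k$ in place of $j_\Grd^{(k)}$, together with the defining relation \eqref{eq:liePi2}. Second, I would invoke the one-to-one correspondence $\pi\leftrightarrow\delta$ of \cite[Prop.~3.7]{ILX} between $\cX^k_{\mathrm{lin}}(A)$ and degree-$(k-1)$ derivations $\delta\colon\Gamma(\wedge^\bullet A)\rmap\Gamma(\wedge^{\bullet+k-1}A)$ of the exterior algebra, as spelled out in \eqref{eq:pidelta1}--\eqref{eq:pidelta2} (equivalently \eqref{eq:coord}). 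Third, by Theorem~\ref{thm:maindual}, under this correspondence $\overline{\pi}$ is a Lie algebroid morphism precisely when $\delta$ also satisfies \eqref{eq:gers}, i.e.\ when $\delta\in\cX^k_{\mathrm{IM}}(A)$.

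Composing these three bijections then yields \eqref{eq:bijder}: a multiplicative $\Pi\in\cX^k_{\mathrm{mult}}(\Grd)$ is sent to the $(k-1)$-derivation $\delta$ of the Gerstenhaber bracket attached to the linear $k$-vector field $\pi=\LF(\Pi)$. Since every step is fibrewise linear, the resulting bijection is in fact a linear isomorphism, in exact parallel with Theorem~\ref{thm:1-1} for the case of forms.

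The only point that requires genuine verification rather than bookkeeping is that $\LF(\Pi)$ lies in $\cX^k_{\mathrm{lin}}(A)$ and not merely in $\cX^k(A)$ --- this is the dual counterpart of Lemma~\ref{lem:Lie1}, and it is already asserted in the discussion surrounding \eqref{eq:liePi2}; once this is granted, the statement is immediate from Theorem~\ref{thm:maindual}. I do not expect a substantive obstacle here: the cotangent-groupoid and cotangent-algebroid structures \eqref{eq:T*A0}--\eqref{eq:br*2} were set up precisely so that the argument of Section~\ref{sec:mult} transfers verbatim, so the proof is essentially a one-line composition of results established earlier.
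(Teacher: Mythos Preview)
Your proposal is correct and is exactly the paper's approach: the corollary is stated as an immediate consequence of Theorem~\ref{thm:maindual}, combined with the bijection $\Pi\mapsto\LF(\Pi)$ onto those $\pi\in\cX^k_{\mathrm{lin}}(A)$ with $\overline{\pi}$ a Lie algebroid morphism (established just before the corollary via the arguments of Lemma~\ref{lem:int}) and the correspondence $\pi\leftrightarrow\delta$ of \cite[Prop.~3.7]{ILX}. Your identification of the linearity of $\LF(\Pi)$ as the one point needing care is also accurate; the paper handles it implicitly as part of the ``exact same arguments as in Lemma~\ref{lem:int}''.
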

This result is parallel to Theorem~\ref{thm:1-1}, except that it
provides no explicit way of computing $\delta$ directly out of $\Pi$
(analogous to \eqref{eq:mu1} and \eqref{eq:nu1}). This missing
aspect will be clarified in the next section.

\subsection{The universal lifiting theorem revisited}

For $u \in \Gamma(\wedge^p A)$, let us denote by $u^r$ the
corresponding right-invariant $p$-vector field on $\Grd$. As
observed in \cite[Section~2]{ILX}, given $\Pi \in
\cX_{\mathrm{mult}}^k(\Grd)$, then $[\Pi, u^r]$ is again right
invariant, which means that there exists $\delta_\Pi u \in
\Gamma(\wedge^{p+k-1}A)$ such that $(\delta_\Pi u)^r = [\Pi, u^r]$.
One can check that the map $\delta_\Pi : \Gamma(\wedge^p A)\to
\Gamma(\wedge^{p+k-1}A)$ is a derivation of the Gerstenhaber
structure, i.e., $\delta_\Pi \in \cX^k_{\mathrm{IM}}(M)$.

\begin{proposition}\label{prop:der}
The map $\cX_{\mathrm{mult}}^k(\Grd)\longrightarrow
\cX^k_{\mathrm{IM}}(A)$, $\Pi \mapsto \delta_\Pi$, where
$\delta_\Pi$ is defined by
\begin{equation}\label{eq:deltapi}
(\delta_\Pi f)^r = [\Pi, \tar^* f],\;\;\;(\delta_\Pi u)^r=[\Pi,u^r],
\end{equation}
for $f\in C^\infty(M)$ and $u\in \Gamma(A)$, coincides with the map
\eqref{eq:bijder}; in particular, it is a bijection.
\end{proposition}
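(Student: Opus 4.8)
The plan is to compare the two derivations on generators, and then use their derivation property. By Theorem~\ref{thm:maindual} applied to the linear $k$-vector field $\pi=\LF(\Pi)$ — whose associated $\overline{\pi}$ is a Lie algebroid morphism, as recalled before Corollary~\ref{cor:1-1} — the derivation $\delta$ produced by \eqref{eq:bijder} lies in $\cX^k_{\mathrm{IM}}(A)$; likewise $\delta_\Pi\in\cX^k_{\mathrm{IM}}(A)$, by the observation preceding the Proposition (following \cite{ILX}). In particular both $\delta$ and $\delta_\Pi$ are degree-$(k-1)$ derivations of the \emph{exterior} algebra $\Gamma(\wedge^\bullet A)$, hence are uniquely determined by their restrictions to $C^\infty(M)=\Gamma(\wedge^0A)$ and $\Gamma(A)=\Gamma(\wedge^1A)$ — equivalently, by the pair $(\delta_0,\delta_1)$ of \eqref{eq:deriv01} — since $C^\infty(M)$ together with $\Gamma(A)$ generate $\Gamma(\wedge^\bullet A)$ as an algebra. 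So it suffices to prove $\delta f=\delta_\Pi f$ for all $f\in C^\infty(M)$ and $\delta u=\delta_\Pi u$ for all $u\in\Gamma(A)$.

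Next I would unwind the definition of $\pi=\LF(\Pi)$ at units. Recall that $\overline{\LF(\Pi)}=\LF(\overline{\Pi})\circ(\theta_\Grd^k)^{-1}$, that $\LF(\overline{\Pi})$ is the restriction to the Lie algebroid of the differential $\dd\overline{\Pi}$ (which is meaningful there because $\overline{\Pi}$, being a groupoid morphism into $(\mathbb{R},+)$, vanishes on units), and that the fibres of $c_A:T^*A\to A$ are spanned by the differentials $\dd l_\xi$ ($\xi\in\Gamma(A^*)$) and $\dd q_A^*f$ ($f\in C^\infty(M)$). The computation to be carried out is the description of $(\theta_\Grd)^{-1}$ on these generators, extended to $\oplus^k$ via the core/linear generators \eqref{eq:secT*A1}, \eqref{eq:secT*A2} exactly as in Section~\ref{subsec:Liealg}: using the explicit cotangent-groupoid source map \eqref{eq:sourcet}, \eqref{eq:sexp}, property \eqref{eq:Cprop} says precisely that $\dd\tar^*f\in\Omega^1(\Grd)$ is $\widetilde{\sour}$-related to the zero section of $A^*$, i.e.\ it is a ``core'' direction of $T^*\Grd$, and under $\theta_\Grd$ (see \eqref{eq:thetaG} and \cite{Mac-Xu,Mac-book}) it corresponds to $\dd q_A^*f$; similarly, the invariant functions on $T^*\Grd$ attached to $\xi\in\Gamma(A^*)$, resp.\ to $u\in\Gamma(A)$, give ``linear'' directions corresponding to $\dd l_\xi$, resp.\ $\dd l_{\xi}$-type data attached to $u$. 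Feeding the generators into $\LF(\overline{\Pi})$, and using that $[\Pi,g]$ is (up to sign) the contraction $\iota_{\dd g}\Pi$ for $g\in C^\infty(\Grd)$, one would obtain for every $x\in M$ and $\xi^1,\dots,\xi^{k-1}\in\Gamma(A^*)$
\[
\pi(\dd l_{\xi^1},\dots,\dd l_{\xi^{k-1}},\dd q_A^*f)\big|_{0_x}
= \SP{\big([\Pi,\tar^*f]\big)\big|_x,\ \xi^1\wedge\cdots\wedge\xi^{k-1}},
\]
which by \eqref{eq:pidelta1} and \eqref{eq:deltapi} equals $\SP{\delta_\Pi f,\ \xi^1\wedge\cdots\wedge\xi^{k-1}}(x)$; here one uses that, by multiplicativity of $\Pi$, $[\Pi,\tar^*f]$ is right-invariant, hence equal to $(\delta_\Pi f)^r$ and determined by its value $\delta_\Pi f$ along $M$. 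Thus $\delta f=\delta_\Pi f$. Running the same computation with the extra slot in \eqref{eq:pidelta2}, now built from $\dd q_A^*\SP{\xi,u}$ and the linear directions attached to $u\in\Gamma(A)$ (whose associated right-invariant vector field $u^r$ restricts to $u$ along $M$, and with $[\Pi,u^r]$ again right-invariant), yields $\delta u=\big([\Pi,u^r]\big)\big|_M=\delta_\Pi u$.

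The main obstacle is entirely in the middle step: making precise how the core and linear generators of the Lie algebroid $\oplus^k_A T^*A=A(\oplus^k_\Grd T^*\Grd)$ are matched, under $\theta_\Grd^k$ and the Lie functor, with the differentials $\dd\tar^*f$ and with the covectors pairing with $u^r$ on $\Grd$ — this is a careful but routine trace through the construction of the cotangent groupoid (in which \eqref{eq:Cprop} is exactly what forces the ``$\tar$-basic'' directions into the core) and of the identification \eqref{eq:thetaG}. Once the values of $\delta$ and $\delta_\Pi$ on $C^\infty(M)$ and on $\Gamma(A)$ are shown to agree, the reduction in the first paragraph promotes this to equality of the two derivations on all of $\Gamma(\wedge^\bullet A)$; this simultaneously supplies the explicit recipe for \eqref{eq:bijder} that was missing after Corollary~\ref{cor:1-1}, so that the bijectivity of $\Pi\mapsto\delta_\Pi$ follows from that of \eqref{eq:bijder}.
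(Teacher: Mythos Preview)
Your high-level plan coincides with the paper's: both $\delta$ and $\delta_\Pi$ are degree-$(k-1)$ derivations of $\Gamma(\wedge^\bullet A)$, so it suffices to check agreement on $C^\infty(M)$ and on $\Gamma(A)$, and this is exactly how the paper proceeds.

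The gap is in your ``middle step'', which you flag as the main obstacle but then call routine. Your description contains a type confusion: $\dd\tar^*f$ is a section of $T^*\Grd\to\Grd$, whereas $\theta_\Grd$ has domain $A(T^*\Grd)\subset T(T^*\Grd)$; the assertion that ``under $\theta_\Grd$, $\dd\tar^*f$ corresponds to $\dd q_A^*f$'' is therefore not well-posed. What actually must be established is a statement at the level of $T^*(T\Grd)$: setting $\mathcal{J}=\Theta\circ\iota_{A(T^*\Grd)}\circ\theta_\Grd^{-1}:T^*A\to\iota_A^*T^*(T\Grd)$, one has $\mathcal{J}(\dd q_A^*f)=\dd(\tar^*f)^{\vl}$ and $\mathcal{J}(\dd l_\xi)=\dd l_{\widetilde{\xi}}+\dd h^{\vl}$ for some $h\in C^\infty(\Grd)$ with $h|_M=0$ (this is the paper's Lemma~\ref{lem:J}). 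The computation then passes through the \emph{tangent lift} $\Pi_T\in\cX^k(T\Grd)$ via $\overline{\pi}=\overline{\Pi_T}\circ\mathcal{J}^k$, together with the Schouten relations $[\Pi_T,g^{\vl}]=[\Pi,g]^{\vl}$ and $[\Pi_T,(u^r)^{\vl}]=-((\delta_\Pi u)^r)^{\vl}$ between tangent and vertical lifts; none of this apparatus appears in your outline. For the $\Gamma(A)$ case the computation is substantially more delicate than ``running the same computation'': one must evaluate the linear function $\pi(\dd l_{\xi^1},\ldots,\dd l_{\xi^k})$ at a general $u_0\in A$ (not at the zero section, where it vanishes), use Lemma~\ref{lem:linf} to convert this into a Lie derivative along $(u^r)^{\vl}$, and then argue that the correction terms $\dd h_j^{\vl}$ coming from Lemma~\ref{lem:J}(b) contribute zero --- the latter requires that $\Pi_T^\sharp\circ\mathcal{J}^{(k-1)}$ takes values in $T\iota_A(TA)$, which in turn uses that $\Pi^\sharp$ is a groupoid morphism. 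So the step you describe as a routine trace is precisely where all the work lies.
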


The fact that the correspondence in Proposition~\ref{prop:der} is a
bijection is the \textit{universal lifting theorem} of \cite{ILX}
(see Theorem~2.34 therein),  which we recover here as a consequence
of Corollary~\ref{cor:1-1}. We need to collect some observations
before getting into the proof of Proposition~\ref{prop:der}.

Let us consider the isomorphism
\begin{equation}\label{eq:Theta}
\Theta: T(T^*\Grd)\longrightarrow T^*(T\Grd),\;\;
(z^j,\alpha_j,\dot{z}^j,\dot{\alpha}_j)\mapsto
(z^j,\dot{z}^j,\dot{\alpha}_j,\alpha_j),
\end{equation}
which is related to the identification $\theta_\Grd$ in
\eqref{eq:thetaG} via
\begin{equation}\label{eq:reltheta}
\theta_\Grd = (T\iota_A)^t \circ \Theta \circ \iota_{A(T^*\Grd)},
\end{equation}
where $(T\iota_A)^t$ is the fibrewise dual to the vector-bundle map
$T\iota_A: TA\to \iota_A^* T(T\Grd)$ (the composition in
\eqref{eq:reltheta} is well defined since $\Theta\circ
\iota_{A(T^*\Grd)}(A(T^*\Grd))\subset \iota_A^* T^*(T\Grd)$; this
can be derived directly from \eqref{eq:sexp}). For a $k$-vector
field $\Pi\in \cX^k(\Grd)$, its \textbf{tangent lift} is the
$k$-vector field $\Pi_T\in \cX^k(T\Grd)$ defined by the condition
(cf. \eqref{eq:alphaT})
\begin{equation}\label{eq:PiT1}
\overline{\Pi_T}= \dd \overline{\Pi}\circ (\Theta^{-1})^k,
\end{equation}
where $\dd \overline{\Pi}: T(\oplus_\Grd^k
T^*\Grd)=\prod^k_{Tc_\Grd}T(T^*\Grd) \to \mathbb{R}$ is the
differential of the function $\overline{\Pi}$ in $C^\infty (\oplus^k_\Grd T^*\Grd)$ defined by \eqref{eq:Pi}.


\begin{remark}\label{rem:sharp2}
As observed in \cite{GU}, one may alternatively define the tangent
lift $\Pi_T$ in terms of $\Pi^\sharp$ \eqref{eq:sharp}:
\begin{equation}\label{eq:tanglift2}
\Pi_T^\sharp: \oplus_{T\Grd}^{k-1}T^*(T\Grd)
 \to T(T\Grd), \;\; \Pi_T^\sharp
= (J_\Grd)^{-1}\circ T\Pi^\sharp \circ (\Theta^{-1})^{(k-1)},
\end{equation}
where $J_\Grd: T(T\Grd)\to T(T\Grd)$ is the involution \eqref{eq:J}.
\end{remark}

When $\Pi$ is multiplicative, it follows from \eqref{eq:liePi2} that
\begin{equation}\label{eq:pitheta}
\overline{\pi} = \overline{\LF(\Pi)} = \overline{\Pi_T}\circ
(\Theta\circ \iota_{AT^*\Grd}\circ \theta_\Grd^{-1})^k.
\end{equation}
We will need this characterization of $\pi$ in the proof of
Proposition \ref{prop:der}.

For local computations, it will be convenient to consider adapted
local coordinates
\begin{equation}\label{eq:adapt}
(x^j,y^d) \; \mbox{ on $\Grd$ around $M\subset \Grd$,}
\end{equation}
where $y^d$ are coordinates along the $\sour$-fibres. We will also
use the induced coordinates $((x^j,y^d),(\dot{x}^j,\dot{y}^d))$ on
$T\Grd$, and similarly for $T^*\Grd$, $T(T^*\Grd)$ and $T^*(T\Grd)$.
In these coordinates, $\iota_A:A\to T\Grd|_M$,
$\iota_A(x^j,u^d)=((x^j,0),(0,u^d))$, and $T\iota_A: TA\to
\iota_A^*T(T\Grd)$ is given by
$$
T\iota_A\left(\dot{x}^j\frac{\partial}{\partial x^j} +
\dot{u}^d\frac{\partial}{\partial u^d}\right)\Big|_u =
\dot{x}^j\frac{\partial}{\partial x^j} +
\dot{u}^d\frac{\partial}{\partial \dot{y}^d}
\Big|_{\iota_A(u)},\;\;\; u\in A,
$$
whereas for $(T\iota_A)^t : \iota_A^* T^*(T\Grd)\to T^*A$ we have
$$
(T\iota_A)^t (p_jdx^j +\gamma_ady^a + \overline{p}_jd\dot{x}^j
+\overline{\gamma}_ad\dot{y}^a)|_{\iota_A(u)} = (p_jdx^j +
\overline{\gamma}_a du^a)|_u, \;\; u\in A.
$$
Since the unit map $\widetilde{\epsilon}:A^*\to T^*\Grd|_M$
identifies $A^*$ with the annihilator of $TM\subset T\Grd$, given
$\xi\in\Gamma(A^*)$, locally written as $(x^j,\xi_d)$, the local
1-form on $\Grd$ given by
\begin{equation}\label{eq:xit}
\widetilde{\xi}(x^j,y^d) = \xi_d(x) dy^d
\end{equation}
extends $\widetilde{\epsilon}(\xi(x))$ to a neighborhood of $M$ in
$\Grd$. We denote by $l_{\widetilde{\xi}}\in C^\infty(T\Grd)$ the
linear function determined by $\widetilde{\xi}$. The following lemma
is key to compare the map in Proposition~\ref{prop:der} with the map
\eqref{eq:bijder}.

\begin{lemma}\label{lem:J} Let $\mathcal{J} = \Theta\circ \iota_{AT^*\Grd}\circ \theta_\Grd^{-1}: T^*A\to
\iota_A^* T^*(T\Grd)$, and let $u_0\in A$. Then, for any $f\in
C^\infty(M)$ and $\xi\in \Gamma(A^*)$, we have
\begin{align}
& \mathcal{J}(\dd q_A^*f |_{u_0}) = \dd (\tar^* f)^{\vl} |_{\iota_A(u_0)} ,\label{eq:Ja}\\
& \mathcal{J}(\dd l_\xi|_{u_0}) = (\dd l_{\widetilde{\xi}} + \dd
h^{\vl})|_{\iota_A(u_0)},\label{eq:Jb}
\end{align}
where $^{\vl}$ means the pull-back of functions on $\Grd$ by
$p_\Grd:T\Grd\to \Grd$, and $h\in C^\infty(\Grd)$ is a function that
vanishes on $M\subset \Grd$.

\end{lemma}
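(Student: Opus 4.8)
The statement is local, so the plan is to work throughout in the adapted coordinates \eqref{eq:adapt}; there $\sour(x,y)=x$, the unit section is $\epsilon(x)=(x,0)$, $M=\{y=0\}$, $\iota_A(x,u)=((x,0),(0,u))$, and the anchor satisfies $\rho^j_d=\partial_{y^d}\tar^j|_{(x,0)}$ while $\tar(x,0)=x$. The first move is to avoid inverting $\theta_\Grd$ directly: combining \eqref{eq:thetaG} with \eqref{eq:reltheta}, for $\zeta\in T^*_{u_0}A$ the covector $\mathcal{J}(\zeta)=\Theta(\iota_{AT^*\Grd}(\theta_\Grd^{-1}\zeta))$ is characterized as \emph{the unique element of the image of $\Theta\circ\iota_{AT^*\Grd}$ based over $\iota_A(u_0)\in T\Grd$ with $(T\iota_A)^t\mathcal{J}(\zeta)=\zeta$} (uniqueness because $\theta_\Grd$ is injective and $\theta_\Grd=(T\iota_A)^t\circ\Theta\circ\iota_{AT^*\Grd}$). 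So it suffices to (i) describe that image in coordinates and (ii) read off the element hitting $\zeta$.

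For (i), recall $A(T^*\Grd)=\ker(T\widetilde{\sour})|_{\widetilde{\epsilon}(A^*)}$. A point of $\widetilde{\epsilon}(A^*)$ is $((x,0),(0,\eta))$ (vanishing $\dd x$-part, since $\widetilde{\epsilon}(A^*)=(TM)^\circ$), and a tangent vector to $T^*\Grd$ there lying in $A(T^*\Grd)$ has, in the coordinates $(x,y,p,\eta)$ on $T^*\Grd$, the form $(0,u_0,\dot p,\dot\eta)$ — its $\dot x$-part vanishes because $\widetilde{\sour}$ covers $\sour=\mathrm{pr}_x$, and its $\dot y$-part is $u_0$ because $\LF(c_\Grd)(\theta_\Grd^{-1}\zeta)$ is the base point of $\zeta$ — subject to one scalar family of constraints from $\ker T\widetilde{\sour}$. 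Applying $\Theta$ (see \eqref{eq:Theta}) this becomes the covector $\dot p_j\,\dd x^j+\dot\eta_e\,\dd y^e+0\cdot\dd\dot x^j+\eta_d\,\dd\dot y^d$ at $\iota_A(u_0)$; the crucial structural fact is that its $\dd\dot x^j$-components vanish identically. Using \eqref{eq:sexp}, the $\ker T\widetilde{\sour}$ constraint expresses $\dot\eta_e$ linearly in $\dot p_j$, $\eta_f$ and $u_0$: the coefficient of $\dot p_j$ is $-C^{x^j}_e(x,0)$, which by \eqref{eq:Cprop} (together with $\widetilde{\sour}|_{\widetilde{\epsilon}(A^*)}=\mathrm{id}$, forcing the $y$-components of the structure functions to be $\delta^f_e$ along $M$) equals $\rho^j_e$, while the coefficient of $\eta_f u_0^d$ involves $y$-derivatives of $\widetilde{\sour}$ at $M$. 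Finally $(T\iota_A)^t\mathcal{J}(\zeta)=\zeta$ pins down $\dot p_j$ as the $\dd x^j$-component of $\zeta$ and $\eta_d$ as the $\dd u^d$-component of $\zeta$, whence $\mathcal{J}(\zeta)$ is completely determined.

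For (ii): when $\zeta=\dd q_A^*f|_{u_0}$ the $\dd u^d$-component is zero, the awkward $y$-derivative terms drop, and one gets $\mathcal{J}(\dd q_A^*f)=\partial_{x^j}f\,\dd x^j+\rho^j_e\partial_{x^j}f\,\dd y^e$; since $(\tar^*f)^{\vl}$ depends only on the $\Grd$-point $(x,y)$ and $\tar(x,0)=x$, $\partial_{y^e}\tar^j|_{(x,0)}=\rho^j_e$, this is exactly $\dd(\tar^*f)^{\vl}|_{\iota_A(u_0)}$, proving \eqref{eq:Ja}. When $\zeta=\dd l_\xi|_{u_0}$, whose $\dd x^j$-component is $u_0^d\,\partial_{x^j}\xi_d$ and whose $\dd u^d$-component is $\xi_d$, the recipe gives $\mathcal{J}(\dd l_\xi)=u_0^d\partial_{x^j}\xi_d\,\dd x^j+R_e\,\dd y^e+\xi_d\,\dd\dot y^d$ for an explicit $R_e$. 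From \eqref{eq:xit}, $l_{\widetilde{\xi}}(x,y,\dot x,\dot y)=\xi_d(x)\dot y^d$, so $\dd l_{\widetilde{\xi}}|_{\iota_A(u_0)}=u_0^d\partial_{x^j}\xi_d\,\dd x^j+\xi_d\,\dd\dot y^d$; subtracting, $\mathcal{J}(\dd l_\xi)-\dd l_{\widetilde{\xi}}|_{\iota_A(u_0)}=R_e\,\dd y^e$. This last covector has only $\dd y^e$-components, hence equals $(Tp_\Grd|_{\iota_A(u_0)})^t\omega$ with $\omega=R_e\,\dd y^e\in T^*_{(x,0)}\Grd$; since $\omega$ annihilates $T_{(x,0)}M$ we may write $\omega=\dd h|_{(x,0)}$ for some $h\in C^\infty(\Grd)$ vanishing on $M$, and then $R_e\,\dd y^e=\dd h^{\vl}|_{\iota_A(u_0)}$, which is \eqref{eq:Jb}.

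The delicate point is step (i): correctly locating $A(T^*\Grd)$ inside $T(T^*\Grd)$, pushing it through $\Theta$, and tracking the single constraint coming from $\ker T\widetilde{\sour}$ while only knowing $\widetilde{\sour}$ partially through \eqref{eq:sourcet}, \eqref{eq:sexp} and \eqref{eq:Cprop}. What saves the computation is precisely that the only genuinely uncontrolled ingredient — the $y$-derivatives of $\widetilde{\sour}$ along $M$ — drops out of the first identity (where the $\dd u$-part of $\zeta$ vanishes) and, in the second identity, is confined to the $\dd y^e$-direction and hence gets absorbed into an exact term $\dd h^{\vl}$ with $h|_M=0$.
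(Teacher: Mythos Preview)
Your proposal is correct and follows essentially the same route as the paper: both work in the adapted coordinates \eqref{eq:adapt}, use the factorization \eqref{eq:reltheta} to characterize $\mathcal{J}(\zeta)$ as the unique element of $\Theta\circ\iota_{A(T^*\Grd)}(A(T^*\Grd))$ over $\iota_A(u_0)$ with $(T\iota_A)^t$-image $\zeta$, observe that such elements have vanishing $\dd\dot{x}^j$-components, and for \eqref{eq:Jb} absorb the leftover $\dd y^e$-part into $\dd h^{\vl}$ with $h|_M=0$. The only organizational difference is in \eqref{eq:Ja}: the paper guesses the answer $\dd(\tar^*f)^{\vl}$ and checks it lies in the image by verifying \eqref{eq:Tst} via \eqref{eq:Cprop}, whereas you parametrize the image first (making the $\ker T\widetilde{\sour}$ constraint explicit and extracting $-C^{x^j}_e(x,0)=\rho^j_e$ from \eqref{eq:Cprop} together with $\widetilde{\sour}\circ\widetilde{\epsilon}=\id$) and then identify the result with $\dd(\tar^*f)^{\vl}$; these are the same computation run in opposite directions.
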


\begin{proof} The proof follows from some observations, all of which
can be checked through computations in adapted local coordinates
$(x^j,y^d)$ as in \eqref{eq:adapt}.

The first observation one can directly verify is that
\begin{equation}\label{eq:J1}
(T\iota_A)^t (\dd(\tar^* f)^{\vl} |_{\iota_A(u)})=\dd
q_A^*f|_{u},\;\; u\in A.
\end{equation}

Using the local expression \eqref{eq:sexp} for the source map
$\widetilde{\sour}$, the property \eqref{eq:Cprop}, and the
definition of $\Theta$, a direct computation shows that
\begin{equation}\label{eq:Tst}
T\widetilde{\sour}(\Theta^{-1}(\dd (\tar^* f)^{\vl} |_{\iota_A(u)}))
= 0 \in TA^*|_{q_A(u)},\;\;\; q_A(u)\in M\subset A^*.
\end{equation}
It follows that $\Theta^{-1}(\dd(\tar^* f)^{\vl} |_{\iota_A(u)})$ is
in the image of $\iota_{A(T^*\Grd)}$, hence there is a unique
$\Upsilon \in T^*A|_{q_A(u)}$ such that
$$
\Theta^{-1}(\dd(\tar^* f)^{\vl}
|_{\iota_A(u)})=\iota_{A(T^*\Grd)}(\theta_\Grd^{-1}(\Upsilon)), \;
\mbox{ i.e., } \; \mathcal{J}(\Upsilon)= \dd(\tar^* f)^{\vl}
|_{\iota_A(u)}.
$$
Using \eqref{eq:J1} and \eqref{eq:reltheta}. we conclude that
$\Upsilon = \dd q_A^*f|_u$, which proves \eqref{eq:Ja}.

With respect to the coordinates $((x^j,y^d),(\dot{x}^j,\dot{y}^d))$
on $T\Grd$, one can write
\begin{equation}\label{eq:dlt}
\dd l_{\widetilde{\xi}}|_{\iota_A(u)} = \frac{\partial
\xi_a}{\partial x^i}u^a\dd x^i + \xi_a \dd\dot{y}^a \in
T^*(T\Grd)|_{\iota_A(u)},
\end{equation}
from where we conclude that
\begin{equation}\label{eq:leq}
(T\iota_A)^t (\dd l_{\widetilde{\xi}}|_{\iota_A(u)}) = \left
(\frac{\partial \xi_a}{\partial x^j}u^a\dd x^j + \xi_a \dd u^a
\right)\Big|_{u} = q_A^* \dd l_\xi |_{u} \;\in T^*A|_u.
\end{equation}
Let us now consider $\mathcal{J}(q_A^*\dd l_\xi) \in \iota_A^*
T^*(T\Grd)$. Since $\Theta^{-1}(\mathcal{J}(q_A^*\dd l_\xi))$ lies
in $T(T^*\Grd)|_{A^*}$, one can directly verify that
$\mathcal{J}(q_A^*\dd l_\xi)$ can be written as
$$
p_j\dd x^j + \gamma_a\dd y^a + \overline{\gamma}_a\dd \dot{y}^a,
$$
i.e., its components relative to $\dd\dot{x}^j$ vanish. By
\eqref{eq:reltheta}, $(T\iota_A)^t(\mathcal{J}(q_A^*\dd
l_\xi)|_u)=q_A^*\dd l_\xi|_u$, so from the second equality in
\eqref{eq:leq} we conclude that $p_j = \frac{\partial
\xi_a}{\partial x^j}u^a$ and $\overline{\gamma}_a=\xi_a$, i.e.,
$$
\mathcal{J}(q_A^*\dd l_\xi|_u) = \left( \frac{\partial
\xi_a}{\partial x^j}u^a \dd x^j + \overline{\gamma}_a \dd y^a +
{\xi}_a \dd\dot{y}^a\right)\Big |_{\iota_A(u)}.
$$
For each given $u_0\in A$, one can find $h\in C^\infty(\Grd)$
vanishing on $M\subset \Grd$ and such that $\dd
h^{\vl}|_{\iota_A(u_0)} = \overline{\gamma}_a(\iota_A(u_0))\dd y^a$,
and \eqref{eq:Jb} follows by a direct comparison with
\eqref{eq:dlt}.
\end{proof}

We will need the following immediate observations about linear
functions on vector bundles.

\begin{lemma}\label{lem:linf}
Let $q_B: B\to N$ be a vector bundle, with coordinates $(x^j,b^d)$
relative to a basis of local sections $\{e_d\}$, and consider
$b=b^de_d\in \Gamma(B)$, $b^{\vl}=b^d\frac{\partial}{\partial
b^d}\in \cX(B)$, and $\beta=\beta_d e^d \in \Gamma(B^*)$. Let
$l_\beta \in C^\infty(B)$ be the linear function defined by $\beta$,
and fix $b_0 = b(x_0) \in B$, for a given $x_0\in N$. Then
$\Lie_{b^{\vl}}l_\beta = q_B^* \SP{\beta,b}$ and
\begin{equation}\label{eq:linf}
l_\beta(b_0) = (\Lie_{b^{\vl}}l_\beta)(b_0).
\end{equation}
\end{lemma}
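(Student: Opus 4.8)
The statement is an elementary coordinate computation, so the plan is to unwind the definitions in the local coordinates $(x^j,b^d)$ fixed in the statement and verify the two assertions directly; no real obstacle is expected.

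First I would record the local expressions of all the objects involved. Writing $b=b^d(x)\,e_d$ and $\beta=\beta_d(x)\,e^d$, the linear function defined by $\beta$ is $l_\beta=\beta_d(x)\,b^d\in C^\infty(B)$, the vertical vector field is $b^{\vl}=b^d(x)\,\frac{\partial}{\partial b^d}\in\cX(B)$, and $\SP{\beta,b}=\beta_d(x)\,b^d(x)\in C^\infty(N)$.

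For the first identity I would then compute
\[
\Lie_{b^{\vl}}l_\beta = b^{\vl}(l_\beta)=b^e(x)\,\frac{\partial}{\partial b^e}\big(\beta_d(x)\,b^d\big)=\beta_d(x)\,b^d(x),
\]
which is precisely $q_B^*\SP{\beta,b}$, since the right-hand side is the pull-back along $q_B$ of a function of the base coordinates alone. For the second identity I would observe that $b_0=b(x_0)$ has coordinates $(x_0^j,\,b^d(x_0))$, so $l_\beta(b_0)=\beta_d(x_0)\,b^d(x_0)$; on the other hand, using the first identity, $(\Lie_{b^{\vl}}l_\beta)(b_0)=(q_B^*\SP{\beta,b})(b_0)=\SP{\beta,b}(q_B(b_0))=\beta_d(x_0)\,b^d(x_0)$. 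Comparing the two expressions establishes \eqref{eq:linf}.

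The only point that requires any care is the standard identification under which $b^{\vl}$, evaluated at a point of a fibre $B_x$, represents the fibre element $b(x)$ — equivalently, that the $\frac{\partial}{\partial b^d}$ are the vertical coordinate fields associated with the fibre coordinates $b^d$. Once this is granted, the verification above is entirely routine, which is why the lemma is stated as immediate.
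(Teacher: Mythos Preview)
Your proof is correct and is exactly the direct coordinate verification the paper has in mind; the paper offers no proof beyond calling the lemma an ``immediate observation,'' and your computation is the obvious way to justify it. The only subtlety you flag --- the dual use of $b^d$ as fibre coordinates and as component functions of the section $b$ --- is indeed the one notational wrinkle, and you handle it correctly.
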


We now prove Proposition~\ref{prop:der}.

\begin{proof}(of Proposition~\ref{prop:der})

Let $\pi=\LF(\Pi)$, and consider $\xi^1,\ldots,\xi^{k-1}\in
\Gamma(A^*)$ and $f\in C^\infty(M)$. Let us fix $u_0 \in A$,
$x_0=q_A(u_0)\in M$. By \eqref{eq:pitheta} and Lemma~\ref{lem:J}, we
have
\begin{align}
\pi(\dd l_{\xi^1},\ldots,\dd l_{\xi{k-1}},dq_A^*f)|_{u_0} &=
\Pi_T(\mathcal{J}\dd l_{\xi^1},\ldots, \mathcal{J}\dd l_{\xi^{k-1}},
\mathcal{J}\dd q_A^*f)|_{\iota_A(u_0)}\nonumber\\
&=\Pi_T(\dd l_{\widetilde{\xi}^1}+\dd h_1^{\vl},\ldots,\dd
l_{\widetilde{\xi}^{k-1}}+\dd
h_{k-1}^{\vl},\dd(\tar^*f)^{\vl})|_{\iota_A(u_0)},\label{eq:pi1}
\end{align}
with $h_i\in C^\infty(\Grd)$, $h_i|_M=0$, and $\widetilde{\xi}^i$ as
in \eqref{eq:xit} . We directly check from the definition of $\Pi_T$
that it is a linear multivector, $\Pi_T\in
\cX^k_{\mathrm{lin}}(T\Grd)$, so (see footnote \ref{foot:linear})
\begin{equation}\label{eq:vertical}
i_{\dd f_1^{\vl}}i_{\dd f_2^{\vl}}\Pi_T = 0,\;\;\; \forall f_1, f_2
\in C^\infty(\Grd).
\end{equation}
Hence the expression in \eqref{eq:pi1} agrees with
\begin{equation}\label{eq:der1}
\Pi_T(\dd l_{\widetilde{\xi}^1},\ldots,\dd
l_{\widetilde{\xi}^{k-1}},\dd(\tar^*f)^{\vl})|_{\iota_A(u_0)} =
[\Pi_T, (\tar^*f)^{\vl}](\dd l_{\widetilde{\xi}^1},\ldots,\dd
l_{\widetilde{\xi}^{k-1}})|_{\iota_A(u_0)},
\end{equation}
where $[\cdot,\cdot]$ is the Schouten bracket on
$\cX^\bullet(T\Grd)$.

Let us consider the \textit{vertical lift} operation
$\cX^\bullet(\Grd)\to \cX^\bullet(T\Grd)$, $\Pi\mapsto \Pi^{\vl}$:
in coordinates $(z^l)$ on $\Grd$, it sends the vector field
$Y=Y^l\frac{\partial}{\partial z^l}$ to $Y^{\vl} = Y^l
\frac{\partial}{\partial \dot{z}^l}$, and this is extended to a
graded algebra homomorphism of multivector fields. From the Schouten
bracket relations for vertical and tangent lifts, see e.g.
\cite{GU2}, we obtain
$$
[\Pi_T,(\tar^*f)^{\vl}] = [\Pi,\tar^*f]^{\vl} = ((\delta_\Pi
f)^r)^{\vl}.
$$
Letting $x_0=q_A(u_0)\in M$, a direct computation in coordinates
\eqref{eq:adapt} shows that
\begin{align*}
((\delta_\Pi f)^r)^{\vl}(\dd l_{\widetilde{\xi}^1},\ldots,\dd
l_{\widetilde{\xi}^{k-1}})|_{\iota_A(u_0)} = & \SP{(\delta_\Pi f)^r,
\widetilde{\xi}^1\wedge\ldots\wedge
\widetilde{\xi}^{k-1}}\Big|_{\epsilon(x_0)},\\
&= \SP{\delta_\Pi f , {\xi}^1\wedge\ldots\wedge
{\xi}^{k-1}}\Big|_{x_0},
\end{align*}
from where it follows that
$$
\pi(\dd l_{\xi^1},\ldots,\dd l_{\xi{k-1}},\dd q_A^*f)|_{u_0} =
\SP{\delta_\Pi f , {\xi}^1\wedge\ldots\wedge {\xi}^{k-1}}\Big|_{x_0}
= q_A^*\SP{\delta_\Pi f , {\xi}^1\wedge\ldots\wedge
{\xi}^{k-1}}\Big|_{u_0}.
$$
Comparing with \eqref{eq:pidelta1}, we conclude that $\delta$ (see
\eqref{eq:bijder}) and $\delta_\Pi$ agree on $C^\infty(M)$. It
remains to check that they agree on $\Gamma(A)$.

We now consider $\xi^1,\ldots,\xi^k\in \Gamma(A^*)$ and describe
$\pi(\dd l_{\xi^1},\ldots,\dd l_{\xi^k})|_{u_0}$ in terms of
$\delta_\Pi$. By \eqref{eq:pitheta} and \eqref{eq:Jb}, we have
(keeping the notation of Lemma~\ref{lem:J})
\begin{align}
\pi(\dd l_{\xi^1},\ldots,\dd l_{\xi^k})|_{u_0}& =
\Pi_T(\mathcal{J}\dd l_{\xi^1},\ldots,\mathcal{J}\dd l_{\xi^k})|_{u_0} \nonumber \\
&=\Pi_T(\dd l_{\widetilde{\xi}^1}+\dd h_1^{\vl},\ldots, \dd
l_{\widetilde{\xi}^k}+\dd h_k^{\vl})|_{\iota_A(u_0)}.
\label{eq:exp1}
\end{align}
From \eqref{eq:vertical}, we see that the expression $\Pi_T(\dd
l_{\widetilde{\xi}^1}+\dd h_1^{\vl},\ldots, \dd
l_{\widetilde{\xi}^k}+\dd h_k^{\vl})$ can be re-written as
\begin{equation}\label{eq:simp}
\Pi_T(\dd l_{\widetilde{\xi}^1},\ldots,\dd l_{\widetilde{\xi}^k}) +
\sum_{j=1}^k \Pi_T(\mathcal{J}\dd l_{\xi^1},\ldots,\mathcal{J}\dd
l_{\xi^{j-1}}, dh_j^{\vl}, \mathcal{J}\dd l_{\xi^{j+1}},\ldots,
\mathcal{J}\dd l_{\xi^k} ).
\end{equation}
We claim that, for all $j=1,\ldots,k$, we have
\begin{equation}\label{eq:vanish}
\Pi_T(\mathcal{J}\dd l_{\xi^1},\ldots,\mathcal{J}\dd l_{\xi^{j-1}},
\dd h_j^{\vl}, \mathcal{J}\dd l_{\xi^{j+1}},\ldots, \mathcal{J}\dd
l_{\xi^k}) = 0.
\end{equation}
To see that, recall from Remark~\ref{rem:sharp2} that $\Pi_T$
satisfies $ \Pi_T^\sharp \circ \Theta^{(k-1)} = (J_\Grd)^{-1}\circ
T\Pi^\sharp $, and, since $\Pi^\sharp: \oplus^{k-1}_\Grd T^*\Grd\to
T\Grd$ is a groupoid morphism (see Remark~\ref{rem:sharp1}),
$$
T\Pi^\sharp\circ (\iota_{A(T^*\Grd)})^{(k-1)} \subseteq A(T\Grd).
$$
It follows from \eqref{eq:diagJ} and the definition of $\mathcal{J}$
that $\Pi_T^\sharp\circ \mathcal{J}^{(k-1)}\subseteq
T\iota_A(TA)\subset \iota_A^*T(T\Grd)$. Relative to the adapted
coordinates $(x^j,y^d)$ in \eqref{eq:adapt}, elements in
$T\iota_A(TA)\subset \iota_A^*T(T\Grd)$ are combinations of
$\frac{\partial}{\partial x^j}$ and $\frac{\partial}{\partial
\dot{y}^d}$, whereas $\dd h_j^{\vl}|_{\iota_A(u)}$ is in the span of
$\dd y^d$. So \eqref{eq:vanish} follows, and we conclude that
\begin{equation}\label{eq:simp2}
\pi(\dd l_{\xi^1},\ldots,\dd l_{\xi^k})|_{u_0} = \Pi_T(\dd
l_{\widetilde{\xi}^1},\ldots,\dd
l_{\widetilde{\xi}^k})|_{\iota_A(u_0)}.
\end{equation}

To proceed, we observe that $\Pi_T(\dd
l_{\widetilde{\xi}^1},\ldots,\dd l_{\widetilde{\xi}^k})$ defines a
linear function on $T\Grd$, and, using \eqref{eq:linf} in
Lemma~\ref{lem:linf} (with $B=T\Grd$), we write
\begin{equation}\label{eq:eval}
\Pi_T(\dd l_{\widetilde{\xi}^1},\ldots,\dd
l_{\widetilde{\xi}^k})|_{\iota_A(u_0)} =
\Lie_{(u^r)^{\vl}}(\Pi_T(\dd l_{\widetilde{\xi}^1},\ldots,\dd
l_{\widetilde{\xi}^k}))|_{\iota_A(u_0)},
\end{equation}
where $u\in \Gamma(A)$ is such that $u(x_0)=u_0$. But
\begin{align*}\label{eq:leib}
\Lie_{(u^r)^{\vl}}(\Pi_T(\dd l_{\widetilde{\xi}^1},\ldots,\dd
l_{\widetilde{\xi}^k}))=& (\Lie_{(u^r)^{\vl}}
\Pi_T)(\dd l_{\widetilde{\xi}^1},\ldots,\dd l_{\widetilde{\xi}^k})\\
& + \sum_{j=1}^k\Pi_T(\dd l_{\widetilde{\xi}^1},\dd
l_{\widetilde{\xi}^{j-1}},\Lie_{(u^r)^{\vl}}(\dd
l_{\widetilde{\xi}^{j}}), \dd l_{\widetilde{\xi}^{j+1}},\ldots, \dd
l_{\widetilde{\xi}^{k}} )), \nonumber
\end{align*}
and note that
$$
\Lie_{(u^r)^{\vl}}(\dd l_{\widetilde{\xi}^l}) = \dd
(\widetilde{\xi}^l(u^r))^{\vl},\;\;\;
\Lie_{(u^r)^{\vl}}\Pi_T=[u^r,\Pi]^{\vl}=-((\delta_\Pi u)^r)^{\vl},
$$
where we used the Schouten-bracket relations for tangent and
vertical lifts in the second equation. One can directly check that
$$
((\delta_\Pi u)^r)^{\vl}(\dd l_{\widetilde{\xi}^1},\ldots,\dd
l_{\widetilde{\xi}^k})|_{\iota_A(u_0)}=\SP{(\delta_\Pi u)^r,
\widetilde{\xi}^1\wedge\ldots\wedge
{\widetilde{\xi}^k}}\Big|_{\epsilon(x_0)} = \SP{\delta_\Pi u,
{\xi}^1\wedge\ldots\wedge {\xi}^k}\Big|_{x_0}.
$$
Thus $\Lie_{(u^r)^{\vl}}(\Pi_T(\dd l_{\widetilde{\xi}^1},\ldots,\dd
l_{\widetilde{\xi}^k}))|_{\iota_A(u_0)}$ equals
\begin{align*}
-\SP{\delta_\Pi u, {\xi}^1\wedge\ldots\wedge {\xi}^k}\Big|_{x_0} +
\sum_{j=1}^k \Pi_T(\dd l_{\widetilde{\xi}^1},\ldots,\dd
l_{\widetilde{\xi}^{j-1}}, \dd (\widetilde{\xi}^j(u^r))^{\vl} , \dd
l_{\widetilde{\xi}^{j+1}},\ldots, \dd
l_{\widetilde{\xi}^{k}})|_{\iota_A(u_0)}.
\end{align*}
Using local coordinates $(x^j,y^d)$ as in \eqref{eq:adapt}, one can
check the identity
$$
\dd(\widetilde{\xi}^j(u^r))^{\vl}|_{\iota_A(u_0)} = \dd
(\tar^*\SP{\xi^j,u})^{\vl}|_{\iota_A(u_0)} + \dd
h_j^{\vl}|_{\iota_A(u_0)},
$$
where $h_j\in C^\infty(\Grd)$ vanishes on $M\subset \Grd$. It
follows that
\begin{align}
\Pi_T(\dd l_{\widetilde{\xi}^1},\ldots,&\dd
l_{\widetilde{\xi}^{j-1}}, \dd (\widetilde{\xi}^j(u^r))^{\vl} , \dd
l_{\widetilde{\xi}^{j+1}},\ldots, \dd
l_{\widetilde{\xi}^{k}})|_{\iota_A(u_0)}=\\ \nonumber & \Pi_T(\dd
l_{\widetilde{\xi}^1},\ldots,\dd l_{\widetilde{\xi}^{j-1}},\dd
(\tar^*\SP{\xi^j,u})^{\vl} , \dd l_{\widetilde{\xi}^{j+1}},\ldots,
\dd l_{\widetilde{\xi}^{k}})|_{\iota_A(u_0)} + \\ \nonumber &
\Pi_T(\dd l_{\widetilde{\xi}^1},\ldots,\dd
l_{\widetilde{\xi}^{j-1}}, \dd h_j^{\vl}, \dd
l_{\widetilde{\xi}^{j+1}},\ldots, \dd
l_{\widetilde{\xi}^{k}})|_{\iota_A(u_0)}.
\end{align}
Using the linearity of $\Pi_T$ (see footnote \ref{foot:linear}) and
 \eqref{eq:vanish}, we see that
\begin{align*}
\Pi_T(\dd l_{\widetilde{\xi}^1},\ldots,&\dd
l_{\widetilde{\xi}^{j-1}}, \dd h_j^{\vl} , \dd
l_{\widetilde{\xi}^{j+1}},\ldots, \dd l_{\widetilde{\xi}^{k}}) \\&=
\Pi_T(\mathcal{J}\dd l_{\widetilde{\xi}^1},\ldots,\mathcal{J}\dd
l_{\widetilde{\xi}^{j-1}}, \dd h_j^{\vl} , \mathcal{J} \dd
l_{\widetilde{\xi}^{j+1}},\ldots, \mathcal{J}\dd
l_{\widetilde{\xi}^{k}}) = 0.
\end{align*}
A direct comparison with \eqref{eq:pi1}, \eqref{eq:der1} gives that
\begin{align*}
\Pi_T(\dd l_{\widetilde{\xi}^1},\ldots,\dd
l_{\widetilde{\xi}^{j-1}},\dd (\tar^*\SP{\xi^j,u})^{\vl} , & \dd
l_{\widetilde{\xi}^{j+1}},\ldots, \dd
l_{\widetilde{\xi}^{k}})|_{\iota_A(u_0)} = \\& (-1)^{k-j} \pi(\dd
l_{\xi^1},\ldots,\widehat{\dd l_{\xi^j}},\ldots,\dd l_{\xi^k},\dd
q_A^*\SP{\xi^j,u})|_{u_0}
\end{align*}
Going back to \eqref{eq:simp2}, we finally conclude that
\begin{align*}
\pi(\dd l_{\xi^1},\ldots,\dd l_{\xi^k})|_{u_0} = & -\SP{\delta_\Pi
u,
{\xi}^1\wedge\ldots\wedge {\xi}^k}\Big|_{x_0}\\
& + \sum_{j=1}^k (-1)^{k-j} \pi(\dd l_{\xi^1},\ldots,\widehat{\dd
l_{\xi^j}},\ldots,\dd l_{\xi^k},\dd q_A^*\SP{\xi^j,u})|_{u_0}.
\end{align*}
Comparing with \eqref{eq:pidelta2}, we conclude that
$\delta=\delta_\Pi$.
\end{proof}


\end{document}